\documentclass[11pt]{article}

\usepackage{amsmath,amssymb,amsthm,mathtools,amsfonts,xcolor,enumitem,bbm}
\usepackage{authblk,mathrsfs,mathtools}
\usepackage{algorithm}
\usepackage{hyperref}
\usepackage{algpseudocode}
\usepackage{graphicx, float}
\usepackage{graphicx}
\usepackage{subcaption}
\usepackage[margin=1in]{geometry}
\usepackage{setspace}
\numberwithin{equation}{section}

\newtheorem{assumption}{Assumption}[section]
\newtheorem{theorem}{Theorem}[section]
\newtheorem{lemma}{Lemma}[section]
\newtheorem{corollary}{Corollary}[section]
\newtheorem{definition}{Definition}[section]
\newtheorem{proposition}{Proposition}[section]
\theoremstyle{remark}
\newtheorem{remark}{Remark}[section]

\newcommand{\F}{\mathcal{F}}

\newcommand{\E}{\mathbb{E}}

\newcommand{\R}{\mathbb{R}}

\newcommand{\KL}{\mathrm{KL}}
\newcommand{\tr}{\mathrm{tr}}

\newcommand{\Law}{{\rm Law}}
\renewcommand{\P}{\mathbb{P}}

\providecommand{\keywords}[1]
{
  \small	
  \textbf{\textbf{Keywords---}} #1
}

\usepackage{geometry}
\usepackage{verbatim}
\renewcommand{\P}{\mathbb{P}}

\theoremstyle{plain}

\title{Generalized specific entropy on Wiener space with application to Martingale Optimal Transport}

\author[1]{Francois Buet-Golfouse}
\author[1,2]{Anaïs Després}
\author[2]{Zhenjie Ren}
\author[3]{Xin Zhang \thanks{X. Zhang is partially supported by the NSF Grant DMS-2508556.}}

\affil[1]{AIML Global Markets, Barclays}
\affil[2]{LaMME, Université Évry Paris-Saclay, Évry-Courcouronnes, France}
\affil[3]{FRE Department, New York University, New York, USA}

\begin{document}
\maketitle

\begin{abstract}
% Classical entropy regularization is poorly suited to continuous-time martingale transport, since relative entropy between diffusion laws typically forces the volatility characteristics to coincide. We introduce a new specific-entropy framework to overcome this problem by approximating continuous martingales with Poisson jump processes. In the Gaussian-mark case, this yields explicit generalized specific entropy functionals on Wiener space. A key feature is that the limiting cost is not intrinsic to the limiting martingale laws alone, but also reflects the microscopic mechanism by which the continuous paths are approximated. Our method avoids the grid-refinement strategy of previous approaches, which leads to increasingly high-dimensional multimarginal Sinkhorn problems. It also avoids the uniform time-grid assumption underlying these approaches, which implies that price changes occur at fixed intervals regardless of volatility—an assumption that is inconsistent with observed market activity.
% We prove weak convergence of the Poisson approximations and derive explicit limiting entropy functionals. For a particular Poisson scheme, the resulting cost  leads to a continuous-time specific-entropic martingale optimal transport problem, called SEMOT. The chosen cost provides the compactness needed for existence and strong duality, and leads formally to a coupled Hamilton–Jacobi–Bellman/Fokker–Planck system. This structure suggests Sinkhorn-type numerical schemes, which are implemented in one and two dimensions.

Classical entropy regularization is poorly suited to continuous-time martingale transport, since relative entropy between diffusion laws typically forces their volatility characteristics to coincide. We introduce a specific-entropy framework based on Poisson jump approximations of continuous martingales. In the Gaussian-mark case, this yields explicit generalized specific entropy functionals on Wiener space, whose limiting costs depend not only on the limiting martingale laws but also on the microscopic approximation mechanism. This Poissonization approach avoids deterministic grid refinement and the associated high-dimensional multimarginal Sinkhorn problems, while allowing jump intensities to reflect local volatility.

We prove weak convergence of the Poisson approximations and identify the limiting entropy functionals. For a trace-normalized Poisson scheme, the resulting cost defines a continuous-time specific-entropic martingale optimal transport problem, called SEMOT. This cost yields compactness, existence, and strong duality, and leads formally to a coupled Hamilton-Jacobi-Bellman/Fokker-Planck system. The resulting structure suggests Sinkhorn type numerical schemes, which we implement in one and two dimensions.

\end{abstract}
\keywords{Entropy, Martingale optimal transport, Poisson approximation, Duality.}

\section{Introduction}

% {\color{blue}The introduction overall looks good to me. Here are some comments. 

% 1. I think the intro to OT and EOT is a bit long. Notation $OT_c$ $EOT_c$ are not used anymore later in the paper. Perhaps we could start from the definition relative entropy on $P(X)$ where $X$ is an arbitrary polish space. Say it is not well-posed for processes with different volatilities, and then very quickly introduce specific  entropies. 

% 2. We could mention the purpose of constructing specific entropies is two-fold: one is divergence between martingale laws; the other is to use in MOT problem (we need some brief intro to MOT and references).

% 3. It would be better to emphasize some advantages of SEMOT compared with MOT, e.g. regularization effect? I am not sure about this point. 

% }

Optimal transport provides a flexible framework for comparing probability measures and constructing couplings between them. Given two probability measures \(\mu_0,\mu_1\) on \(\R^d\) and a measurable cost
$
c:\R^d\times\R^d\to \R\cup\{+\infty\},
$
the classical Kantorovich problem is
\begin{equation}\label{eq:OT_intro}
\inf_{\pi\in\Pi(\mu_0,\mu_1)}
\int_{\R^d\times\R^d} c(x,y)\,\pi(dx,dy),
\end{equation}
where \(\Pi(\mu_0,\mu_1)\) denotes the set of couplings between \(\mu_0\) and \(\mu_1\). Beyond its original geometric formulation, optimal transport has become a central tool in probability, analysis, PDEs, statistics, and machine learning; see for instance \cite{villani2009optimal,santambrogio2015optimal}.

A particularly important regularization of \eqref{eq:OT_intro} is entropic optimal transport (EOT), where one penalizes deviations from a reference coupling \(R\) by relative entropy $H$:
\[
\inf_{\pi\in\Pi(\mu_0,\mu_1)}
\left\{
\int c(x,y)\,\pi(dx,dy)+\varepsilon H(\pi\,|\,R)
\right\}.
\]
This formulation is appealing both theoretically and computationally. It yields a strictly convex problem, is closely related to the Schrödinger bridge problem, and can be solved efficiently by the celebrated Sinkhorn's algorithm and its variants; see, e.g., \cite{leonard2014survey,peyre2019computational,galichon2016optimal,benamou2021optimal}.
However, standard entropic regularization is naturally adapted to perturbations of the drift around the fixed reference $R$. In continuous time, relative entropy with respect to a diffusion reference is finite only when the candidate process has the same diffusion coefficient as the reference. As a result, the classical Schrödinger/EOT framework is not well suited to problems in which the volatility is itself the main control variable. This limitation is particularly severe in applications from mathematical finance, such as model calibration and robust pricing, where one seeks martingale models consistent with observed option prices and where the relevant degree of freedom is the quadratic variation rather than the drift; see for example \cite{avellaneda1997calibrating,henrylabordere2019martingale,benamou2024entropictransport,beiglbock2013model,henrylabordere2017modelfree}. 

This issue appears prominently in martingale optimal transport (MOT) \cite{henrylabordere2017modelfree, BeNuTo16, HuTr19}. In its static form, MOT replaces \(\Pi(\mu_0,\mu_1)\) by the set
\[
\mathcal M(\mu_0,\mu_1)
:=
\left\{
\pi\in\Pi(\mu_0,\mu_1): \E_\pi[Y\mid X]=X \ \pi\text{-a.s.}
\right\},
\]
and minimizes the same transport cost over martingale couplings.
In this MOT setting, a natural idea is to replace the classical entropy penalty with a divergence that remains meaningful when volatilities differ. A first answer comes from time discretization. When two continuous martingale laws are observed on a discrete time grid, the corresponding discrete laws may have finite relative entropy even though the original continuous-time laws are mutually singular. After suitable rescaling, this discrete entropy may converge to a nontrivial limit, known as the specific relative entropy (SRE) which was first introduced in \cite{gantert1991largedeviations,backhoffveraguas2023specific}. Formally, if $X$ denotes the canonical process on $C([0,T];\R^d)$ and ${\bf X}^n:=(X_{t_1},\cdots, X_{t_n})$
denotes the a uniform time discretization of $X$, SRE is defined as 
\begin{equation}
\label{eq:SRE}
h(\mathbb Q\,|\,\mathbb P)
:=
\lim_{n\to\infty}\frac1n
H\!\left({\bf X}^n\#\mathbb Q\,\middle|\, {\bf X}^n\#\mathbb P\right).
\end{equation}

This idea has recently led to entropy-regularized versions of MOT based on uniform time discretization. In these approaches (\cite{benamou2024entropic,benamou2024entropictransport}), one first discretizes time, studies the corresponding multimarginal transport problems, proves duality and convergence of the numerical method at the discrete level, and then lets the mesh go to zero. However, the duality theory and the numerical methods are established for the discretized problems, not for the original continuous-time problem. In particular, the resulting algorithms are based on solving a sequence of increasingly high-dimensional multimarginal Sinkhorn problems, which rapidly become expensive as the time grid is refined. A more subtle issue lies in the use of uniform time discretization. While seemingly innocuous, this choice implicitly encodes an assumption on the underlying microscopic market model—namely, that price changes (or equivalently, trades) occur at a constant frequency, regardless of the instantaneous volatility. This stands in contrast with previous studies \cite{wyart2008spread, pohl2018trading,muhlekarbe2026unified} , which suggest that trading intensity is closely linked to volatility dynamics.

The central objective of the present paper is precisely to formulate and analyze a continuous-time specific entropic martingale optimal transport problem, which we call SEMOT.  Our goal is to define a continuous-time entropy cost that makes the problem well posed but flexible enough to allow martingale volatility control, and tractable enough to lead to a direct dual formulation and a continuous-time numerical scheme. Rather than starting from deterministic time discretizations, we study relative entropy through Poissonization. More precisely, we approximate continuous martingale laws by families of pure-jump martingale laws and investigate the asymptotic behavior of the entropy between these jump models. This point of view reveals a broader class of limiting entropy costs than those obtained from uniform time grids which corresponds to SRE. It shows, in particular, that the entropy limit is not solely determined by the limiting continuous martingale law: it also depends on how the approximation is performed at the microscopic level. This flexibility is not a drawback but a key advantage: mathematically, among all the possible entropy limits, one can select costs with the most suitable structural properties needed for duality. From a financial perspective, the Poissonization viewpoint makes it possible to encode non-uniform
trading activity at the approximation level, and in particular to consider
schemes in which the jump intensity depends on the local volatility. Thus, the
microscopic frequency of price changes need not be fixed exogenously, but can
adapt to the activity of the underlying martingale model. Potential applications are probably not limited to finance: for instance, our approach
could improve on the recent OT modeling for genes’ developmental trajectories of cells as in \cite{bunne2024singlecellomics}.

There are relatively few works combining specific entropy, martingale transport, and continuous-time numerical methods. The discrete specific entropy viewpoint is developed in \cite{backhoffveraguas2023specific}, while related divergences for martingale models appear in works such as \cite{avellaneda1997calibrating,backhoffveraguas2025specificwasserstein}. The use of specific entropy as a regularization for model calibration and martingale transport is explored in \cite{benamou2024entropic,benamou2024entropictransport}.

The contribution of this paper is to develop a Poissonization-based framework for specific entropic martingale optimal transport. First, we establish weak convergence of a broad class of Poisson approximations to continuous martingale laws, and, more importantly, identify the scaling limits of relative entropy associated with different Gaussian-mark Poissonization schemes. This leads to generalized specific relative entropy functionals on Wiener space and shows that the limiting entropy depends not only on the limiting martingale laws, but also on the microscopic approximation mechanism. For a particular choice of Poissonization, we show that, when d=1, the resulting cost coincides with the specific relative entropy characterized in \cite[Theorem 1]{BaUn23}. Second, we focus on the trace-normalized Poissonization, which is better aligned with microscopic market observations linking trading intensity and volatility, and  provides the a priori estimates needed to prove tightness, lower semicontinuity, existence, and strong duality for the corresponding continuous-time SEMOT problem. We also give a large deviation interpretation of  the specific entropy functional.
Finally, we derive a coupled HJB--Fokker--Planck system characterizing the optimal plan of SEMOT problem and propose a Sinkhorn-type iterative algorithm. %Unlike discretize-then-refine approaches, these algorithms act directly on the continuous-time dual problem associated with SEMOT.

The remainder of the paper is organised as follows. In Section 2, we introduce the continuous martingale framework and the Poisson approximations that serve as the target and reference models. Section 3 contains the main theoretical results: weak convergence of the Poisson approximations, identification of the scaling limit of the normalized relative entropy in the Gaussian-mark setting, and the application to strong duality for SEMOT. In Section 4, we present the numerical results. Section 5 is devoted to the proofs of the main results.

\section{Preliminaries}
\label{sec:Preliminaries}

\paragraph{Notation}
Fix $d\in\mathbb N^*$ and the time horizon $[0,1]$. For $x,y\in\mathbb R^d$, let $x\cdot y$ and $|x|$ denote the Euclidean inner product and norm. For a square matrix $A$, write $A^\top$, $\tr(A)$, and $\det(A)$ for its transpose, trace, and determinant, and let $I_d$ be the identity matrix. We denote by $\mathbb S^d$, $\mathbb S^d_+$, and $\mathbb S^d_{++}$ the sets of symmetric, nonnegative symmetric, and positive definite $d\times d$ matrices, respectively; for $A,B\in\mathbb S^d$, $A\preceq B$ means $B-A\in\mathbb S^d_+$.

The canonical space is either $\Omega=C([0,1];\mathbb R^d)$ or $\Omega=D([0,1];\mathbb R^d)$, with canonical process $X_t(\omega)=\omega_t$ and canonical filtration $\mathbb F=(\mathcal F_t)_{0\le t\le1}$; in the càdlàg case, $\mu(dt,dz)$ denotes the jump measure of $X$. For a probability measure $\mathbb P$ on $\Omega$, we write $\E^\mathbb P$ for expectation and $\mathcal L_\mathbb P(Y)$ for the law of $Y$, omitting $\mathbb P$ when clear from the context. We denote by $\mathbb W$ the Wiener measure on $C([0,1];\mathbb R^d)$.

Let \(E\) be a topological space. We denote by \(\mathcal M(E)\) the space
of finite nonnegative Borel measures on \(E\), and by
$
\mathcal M_1(E)
:= \left\{ \mu \in \mathcal M(E) : \mu(E)=1 \right\}
$
the space of Borel probability measures on \(E\). We also denote by
\(\overline{\mathcal M}(E)\) the space of finite signed Borel measures on
\(E\), endowed with the weak topology
$
\sigma\bigl(\overline{\mathcal M}(E), C_b(E)\bigr).
$

For probability measures $\mathbb P,\mathbb Q$, define
$H(\mathbb P\| \mathbb Q):=\E_\mathbb P\!\left[\log\left(\frac{d\mathbb P}{d\mathbb Q}\right)\right]$ if $\mathbb P\ll \mathbb Q$, and $+\infty$ otherwise. If \(T : E \to F\) is a measurable map and \(\mu\) is a measure on \(E\), we denote by
\(T_{\#}\mu\) the pushforward of \(\mu\) by \(T\), defined by
$
(T_{\#}\mu)(B) = \mu\bigl(T^{-1}(B)\bigr)
\ \text{for every measurable set } B \subseteq F.
$

In the continuous setting $\Omega=C([0,1];\mathbb R^d)$, let $\mathcal P$
be the set of probability measures $\mathbb P$ under which $X$ is a continuous local martingale admitting the representation
$
X_t = X_0 + M_t^{\mathbb P},\ \mathbb P\text{-a.s. for all } t\in[0,1],
$
where $M^{\mathbb P}=(M_t^{\mathbb P})_{0\le t\le1}$ is a continuous local martingale satisfying
$M_0^{\mathbb P}=0$.
We also denote by
$
A_t^{\mathbb P} := \langle M^{\mathbb P}\rangle_t, \
A^{\mathbb P}=(A_t^{\mathbb P})_{0\le t\le1},
$
the quadratic variation of $X$ under $\mathbb P$.
Throughout this section, we restrict attention to probability measures $\mathbb P$ for which
the process $A^{\mathbb P}$ is absolutely continuous with respect to the Lebesgue measure,
$\mathbb P$-a.s. In this case, there exists an $\mathbb F$-progressively measurable process
$
\Sigma^{\mathbb P}=(\Sigma_t^{\mathbb P})_{0\le t\le1}, \
\Sigma_t^{\mathbb P}\in\mathbb{S}_d^{+},
$
such that
$
A_t^{\mathbb P} = \int_0^t \Sigma_s^{\mathbb P}\,ds,
\ \mathbb P\text{-a.s. for all } t\in[0,1].
$
The process $\Sigma^{\mathbb P}$ is referred to as the instantaneous covariance characteristic
of $X$ under $\mathbb P$.
\noindent
For \(\mu_0,\mu_1\in\mathcal M(\mathbb R^d)\), we define
$
\mathcal P(\mu_0):=\{\mathbb P\in\mathcal P:\mathbb P\circ X_0^{-1}=\mu_0\},
$ and
$
\mathcal P(\mu_0,\mu_1):=\{\mathbb P\in\mathcal P(\mu_0):\mathbb P\circ X_1^{-1}=\mu_1\}.\\
$

We start this section by recalling the martingale-problem formulation used to characterize
the limiting continuous martingales and, subsequently, their Poisson
approximations.
\begin{definition}
\label{def:target measure}
 Given $\Sigma(t,x)  \in \mathbb S^d_{+}$, a probability measure \(\mathbb P\) on the c\`adl\`ag path space
\(D([0,1],\mathbb R^d)\), equipped with its Borel \(\sigma\)-algebra,
is said to solve the martingale problem for the time-dependent generator
\begin{equation}\label{eq:mtgp_d}
\mathcal L_t\varphi(x)
=
\frac12 \tr\!\big(\Sigma(t,x)D^2\varphi(x)\big),
\qquad (t,x)\in[0,1]\times\mathbb R^d,
\end{equation}
with initial distribution \(\delta_{x_0}\) if, denoting by \((X_t)\) the canonical process,
\begin{enumerate}
    \item[{\rm (i)}] \((X_0)_{\#}\mathbb P=\delta_{x_0}\);
    \item[{\rm (ii)}] for every \(\varphi\in C_b^2(\mathbb R^d)\), the process
    \[
    M_t^\varphi
    :=
    \varphi(X_t)-\varphi(X_0)-\int_0^t \mathcal L_s\varphi(X_s)\,ds
    \]
    is a martingale.
\end{enumerate}
\end{definition}

Given two covariance fields \(\Sigma_1\) and \(\Sigma_2\), we denote by
\(\mathbb P\) and \(\mathbb Q\) the corresponding target and reference laws,
namely the solutions, whenever they exist, of the martingale problems associated
with \(\Sigma_1\) and \(\Sigma_2\).
Our goal is to approximate these continuous martingale laws by pure-jump
martingale laws \((\mathbb P^n)_{n\ge1}\) and \((\mathbb Q^n)_{n\ge1}\) such
that
\[
\mathbb P^n \Longrightarrow \mathbb P,
\qquad
\mathbb Q^n \Longrightarrow \mathbb Q,
\qquad n\to\infty.
\]

Let us consider an approximating jump process of intensity $n\lambda$ and covariance marks $\frac{\bar\Sigma}{n}$ for which we introduce the following jump generator. For every \(t\in[0,1]\),
\begin{equation}\label{eq:L_n}
(\mathcal L_t^n \varphi)(x)
=
n \lambda(t,x)
\int_{\mathbb{R}^d}
\left[
\varphi\!\left(x+\frac{1}{\sqrt n}\bar\Sigma(t, x)^{1/2}e\right)-\varphi(x)
\right]\eta(de).
\end{equation}
Here \(\lambda:[0,1]\times\mathbb R^d\to[0,\infty)\) is a measurable function describing the local jump intensity, while
\(\bar\Sigma:[0,1]\times\mathbb R^d\to\mathbb S_+^d\) specifies the covariance  of the jump marks and $\eta$ is a probability measure on $\mathbb R^d$.
For each \(n\ge 1\), let \(\mathbb P^n\) be a solution to the martingale problem
on \(D([0,1];\mathbb R^d)\) associated with
\((\mathcal L_t^n)_{0\le t\le 1}\) and initial distribution \(\delta_{x_0}\). The main convergence statement proved below will show, under suitable
assumptions on \(\lambda\), \(\bar\Sigma\), and \(\eta\), that
$
\mathbb P^n \Longrightarrow \mathbb P$
as $n\to\infty,
$
where \(\mathbb P\) solves the continuous martingale problem with covariance
$
\Sigma_1=\Sigma:=\lambda\bar\Sigma.
$
The reference law \(\mathbb Q\) is approximated in the same way.

\section{Main Results}

This section contains the main contribution of the paper. We first establish a general weak convergence result for general Poisson approximations of continuous martingales. We then identify the limit of the normalized relative entropy in the Gaussian-mark setting. Finally, we apply this analysis to a particular superlinear Poissonization  scheme in order to obtain a strong duality result for the associated MOT problem.

\subsection{Weak Convergence} 
We first state the assumption under which the convergence result will hold. It guarantees that the  martingale problem in Definition \ref{def:target measure} admits a unique solution and provides the integrability needed to prove tightness and to control the generator approximation uniformly along the sequence. 

\begin{assumption}
\label{ass:weak_conv}
Suppose $\varepsilon\in(0,1)$, and the following hold.
\begin{enumerate}
 \item[{\rm (i)}] The measure \(\eta\) satisfies
    \[
    \int_{\mathbb{R}^d} e\,\eta(de)=0,
    \qquad
    \int_{\mathbb{R}^d} ee^\top\,\eta(de)=I_d,
    \qquad
    \int_{\mathbb{R}^d} |e|^{2+\varepsilon}\,\eta(de)<\infty.
    \]

    \item[{\rm (ii)}]The map \(\Sigma:=\lambda\bar\Sigma:[0,1]\times\mathbb R^d\to\mathbb S_{++}^d\) is continuous and uniformly elliptic, that is, there exist constants \(0<m\le M<\infty\) such that
     \[
    m I_d\preceq \Sigma(t,x)\preceq M I_d,
    \qquad \forall (t,x)\in[0,1]\times\mathbb R^d.
    \]

    \item[{\rm (iii)}] Let
    \[
    \mathcal M
    :=
    \left\{
        \mathbb P^n \in\mathcal M_1(D([0,1],\mathbb R^d))
        \;\middle|\;
        \mathbb P^n \text{ solves the martingale problem for } \mathcal L^n,\ n\in\mathbb N
    \right\}.
    \]
    Then
    \[
    \sup_{\mathbb P^n \in \mathcal M}
    \mathbb E^{\mathbb P^n}\!\left[
        \int_0^1
        \tr(\bar\Sigma(t, X_t))^{1+\varepsilon/2}\,
        \lambda(t, X_t)\,dt
    \right]
    < \infty.
    \]
\end{enumerate}
\end{assumption}

We have the following convergence result. 

\begin{theorem}
\label{thm:weakconvergence d dim}
Under Assumption
\ref{ass:weak_conv},
 \(\mathbb P^n\) converges to $\mathbb P$ in weak topology as $n \to \infty$.
\end{theorem}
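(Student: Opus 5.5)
The plan is the standard three-step route for martingale problems: tightness of $(\mathbb P^n)$ in $D([0,1];\mathbb R^d)$, identification of every limit point as a solution of the martingale problem for $\mathcal L_t=\frac12\tr(\Sigma D^2)$ with $\Sigma=\lambda\bar\Sigma$, and uniqueness. Uniqueness comes from Assumption~\ref{ass:weak_conv}(ii): $\Sigma$ is continuous, bounded, and uniformly elliptic, so the Stroock--Varadhan theorem gives well-posedness of the martingale problem of Definition~\ref{def:target measure}. Once the first two steps are done, the whole sequence converges to $\mathbb P$.

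\textbf{Tightness.} Under $\mathbb P^n$ the canonical process is a pure-jump martingale with compensator $\nu^n(dt,dz)=n\lambda(t,X_{t-})\,(\,e\mapsto n^{-1/2}\bar\Sigma^{1/2}e)_\#\eta(dz)\,dt$. This uses $\int e\,\eta(de)=0$ and the $L^2$ bounds from (iii), after localization. Its predictable quadratic variation is
\[
\langle X\rangle^n_t=\int_0^t\lambda\bar\Sigma(s,X_s)\,ds=\int_0^t\Sigma(s,X_s)\,ds ,
\]
which is Lipschitz in $t$ with constant $M$ by (ii). Aldous's criterion then applies: for stopping times $\tau_n$ and $\delta_n\downarrow0$ we have $\E|X_{\tau_n+\delta_n}-X_{\tau_n}|^2\le M d\,\delta_n$. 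Together with $X_0=x_0$ this gives tightness; one can equivalently invoke Jacod--Shiryaev, Theorem VI.4.13 on tightness via $C$-tightness of $\langle X\rangle^n$. To show that limit points are supported on $C([0,1];\mathbb R^d)$, I will prove that the large jumps vanish, using the $2+\varepsilon$ moments:
\[
\E^{\mathbb P^n}\sum_{s\le1}|\Delta X_s|^{2+\varepsilon}
= n^{-\varepsilon/2}\,\E^{\mathbb P^n}\!\int_0^1\lambda\int|\bar\Sigma^{1/2}e|^{2+\varepsilon}\eta(de)\,dt
\le C n^{-\varepsilon/2}\sup_{\mathcal M}\E\!\int_0^1\lambda\,\tr(\bar\Sigma)^{1+\varepsilon/2}dt\to0 .
\]
Here $|\bar\Sigma^{1/2}e|^2\le\tr(\bar\Sigma)|e|^2$ and Assumption~\ref{ass:weak_conv}(i),(iii) are used. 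This gives $\sup_s|\Delta X_s|\to0$ in probability, so all limit points are continuous.

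\textbf{Generator convergence and identification.} For $\varphi\in C_b^3$ (dense enough; $C^2_b$ follows by approximation), Taylor expand in $h=n^{-1/2}\bar\Sigma^{1/2}e$. The zeroth- and first-order terms cancel by $\int e\,d\eta=0$. The second-order term gives exactly $\frac12\tr(\lambda\bar\Sigma D^2\varphi)=\mathcal L_t\varphi$ since $\int ee^\top d\eta=I_d$. The remainder is bounded by $C\min(|h|^2,|h|^3)\le C|h|^{2+\varepsilon}$, so
\[
|\mathcal L^n_t\varphi(x)-\mathcal L_t\varphi(x)|\le C_\varphi\, n^{-\varepsilon/2}\lambda(t,x)\,\tr(\bar\Sigma(t,x))^{1+\varepsilon/2}\!\int|e|^{2+\varepsilon}d\eta .
\]
Integrating in time and using (iii) gives $\E^{\mathbb P^n}\int_0^1|\mathcal L^n_t\varphi-\mathcal L_t\varphi|(t,X_t)\,dt\to0$. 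Now take $0\le s_1<\dots<s_k\le s<t$ and bounded continuous $g$. We have
\[
\E^{\mathbb P^n}\Big[\big(\varphi(X_t)-\varphi(X_s)-\int_s^t\mathcal L_r^n\varphi(X_r)dr\big)\prod g_i(X_{s_i})\Big]=0 .
\]
Replace $\mathcal L^n$ by $\mathcal L$ at cost $o(1)$. The functional $\omega\mapsto\int_s^t\mathcal L_r\varphi(\omega_r)dr$ is bounded and continuous at continuous paths, since $\Sigma$ is continuous and bounded and $\varphi\in C_b^2$. So we pass to the limit along a subsequence, using that the limit lives on $C$ so evaluation maps are continuous there. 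This shows every limit point solves the martingale problem for $\mathcal L$ started at $\delta_{x_0}$, hence equals $\mathbb P$.

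\textbf{Main obstacle.} The delicate point is that $\lambda$ and $\bar\Sigma$ individually are only measurable and possibly unbounded; only their product is controlled. All error terms must therefore be routed through the integrated moment bound (iii) rather than through pointwise bounds or continuity of the coefficients. In particular, the martingale property of $M^\varphi$ under $\mathbb P^n$ (not just the local martingale property) and the compensator computation require a localization argument justified by (iii). I would handle this with stopping times $\tau_K=\inf\{t:\int_0^t\lambda\,\tr(\bar\Sigma)^{1+\varepsilon/2}ds\ge K\}$, whose complement has probability $O(1/K)$ uniformly in $n$.
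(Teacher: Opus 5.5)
Your proposal is correct and follows essentially the same route as the paper: Aldous tightness from the bound $\E^{\mathbb P^n}|X_{\tau+\delta}-X_\tau|^2\le dM\delta$ given by the predictable quadratic variation $\int_0^t\Sigma(s,X_s)\,ds$, vanishing of jumps and an $O\bigl(n^{-\varepsilon/2}\lambda\,\tr(\bar\Sigma)^{1+\varepsilon/2}\bigr)$ generator estimate both controlled by Assumption~\ref{ass:weak_conv}(iii), identification of limit points through the martingale problem, and uniqueness by Stroock--Varadhan. The differences are minor. You use $C_b^3$ test functions where the paper uses $C_b^{2,\varepsilon}$. The extra localization by $\tau_K$ is unnecessary, because centering gives $|\mathcal L^n_t\varphi|\le\frac12\|D^2\varphi\|_\infty\tr\Sigma\le\frac{dM}{2}\|D^2\varphi\|_\infty$ by (ii). Condition (ii), not (iii), is also where your $L^2$ bounds actually come from.
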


% The proof proceeds in three steps. First, we establish tightness of the sequence
% \((\mathbb P^n)_{n\ge1}\) on \(D([0,1];\mathbb R^d)\). Second, we show that any subsequential limit is in fact supported on continuous paths, so that limit points belong to \(C([0,1];\mathbb R^d)\). Third, we identify every such limit point by proving that it solves the martingale problem associated with the diffusion generator \(\mathcal L\).

% \medskip

% \noindent
% We begin with tightness.

% \noindent
% Tightness alone is not sufficient, since a priori a limit point could still have jumps. The next lemma rules this out by showing that the jump sizes become negligible as \(n\to\infty\).

% \noindent
% As a consequence, every weak limit point of \(\Law(X)\) is concentrated on continuous paths. This reduces the identification problem to continuous-path laws only.

% \medskip

% \noindent
% To characterize such limit points, we compare the jump generator \(\mathcal L^n\) with the limiting diffusion generator \(\mathcal L\). The following estimate shows that \(\mathcal L^n\) converges uniformly to \(\mathcal L\) on smooth test functions, with an explicit rate.

% \noindent
% This estimate is the key analytical input for passing to the limit in the martingale problem. It allows us to transfer the martingale property from the approximating jump processes to any weak limit.

\subsection{Scaling limit of relative entropy}
\label{sec:limit_of_chaos}
Now, let us choose a particular Poissonization where the marks are Gaussian. In this setting, we can derive an exact finite-\(n\) formula for \(H(\mathbb P_1^n\|\mathbb P_2^n)\), and then pass to the limit to identify the corresponding rate functional. We will show that the entropy asymptotics depend on the Poissonization scheme.

For each \(i\in\{1,2\}\), let \(\mathbb P_i^n\) be the law on the canonical space under which
the canonical process \(X\) solves the martingale problem with time-dependent generator
\[
(\mathcal L_{i,t}^n f)(x)
=
\lambda_i^n(t,x)\int_{\mathbb R^d}
\bigl(f(x+z)-f(x)\bigr)\,\eta_i^n(t,x,dz),
\]
for all \(f\in C_b^2(\mathbb R^d)\), where
$
\lambda_i^n(t,x)=n\lambda_i(t,x),
\
\eta_i^n(t,x,dz)
=
\mathcal N\!\left(0,\frac{\bar\Sigma_i(t,x)}{n}\right)(dz).
$
Equivalently, if \(\mu(dt,dz)\) denotes the jump measure of \(X\), then under \(\mathbb P_i^n\),
its predictable compensator is given by
$
\nu_i^n(dt,dz)
=
\lambda_i^n(t,X_{t-})\,\eta_i^n(t,X_{t-},dz)\,dt.
$
In particular, \(\mathbb P_1^n\) is the target measure and \(\mathbb P_2^n\) is the reference measure.
For each $(t,x)\in[0,1]\times\mathbb{R}^d$, we also define
\[
\begin{aligned}
\ell(t,x)
:={}&
\lambda_1(t,x)\log\frac{\lambda_1(t,x)}{\lambda_2(t,x)}
-\lambda_1(t,x)
+\lambda_2(t,x) \\
&+
\frac{\lambda_1(t,x)}{2}
\left[
\operatorname{tr}\!\bigl(\bar{\Sigma}_2^{-1}(t,x)\bar{\Sigma}_1(t,x)\bigr)
-d
-\log\det\!\bigl(\bar{\Sigma}_2^{-1}(t,x)\bar{\Sigma}_1(t,x)\bigr)
\right].
\end{aligned}
\]

\begin{assumption}
\label{ass:chaos_limit}
The following hold:
\begin{itemize}
    \item[{\rm (i)}] The predictable compensator of the jump measure under \(\mathbb P_1^n\) is absolutely continuous with respect to that under \(\mathbb P_2^n\), i.e.
    $
    \nu_1^n(\omega;dt,dz)\ll \nu_2^n(\omega;dt,dz)
    \ \text{for \(\mathbb P_2^n\)-a.e. \(\omega\)}.$

    \item[{\rm (ii)}] The functions \(\lambda_1,\lambda_2:[0,1]\times\mathbb R^d\to(0,\infty)\) and
    $
    \bar\Sigma_1,\bar\Sigma_2:[0,1]\times\mathbb R^d\to S^d_{++}
    $
    are continuous.

    \item[{\rm (iii)}]
    For $\omega\in D([0,1],\R^d)
    $, the functional $\Psi(\omega):=\int_0^1 \ell(t,\omega_t)\,dt$ is assumed to be uniformly integrable with respect to the sequence of measures \((\mathbb P_1^n)_{n\ge1}\), in the sense that
    \[
    \lim_{K\to\infty}\sup_{n\ge1}
    \mathbb E^{\mathbb P_1^n}\!\left[
    |\Psi|\,\mathbf 1_{\{|\Psi|>K\}}
    \right]=0.
    \]

\end{itemize}
\end{assumption}

\begin{theorem}[Limit of the normalized relative entropy]
\label{thm:limit of chaos}
Under Assumptions \ref{ass:weak_conv} (ii) and \ref{ass:chaos_limit},
\begin{equation}\label{eq:se}
\lim_{n\to\infty}\frac{1}{n}H(\mathbb P_1^n\|\mathbb P_2^n)
=
\mathbb{E}^{\mathbb{P}}\!\left[\int_0^1 \ell(t,X_t)\,dt\right].
\end{equation}
\end{theorem}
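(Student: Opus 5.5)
The plan is to compute $H(\mathbb P_1^n\|\mathbb P_2^n)$ exactly via the Girsanov formula for marked point processes, reduce it to an expectation of a deterministic functional of the path, and then pass to the limit using weak convergence together with uniform integrability.

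\textbf{Step 1: the exact entropy formula.} Assumption \ref{ass:chaos_limit}(i) gives $\nu_1^n\ll\nu_2^n$. Both laws start at $\delta_{x_0}$ and are pure-jump, so the likelihood process is the Doléans-Dade exponential of the jump-measure martingale. Its density is
\[
Y^n(t,z)=\frac{\lambda_1(t,X_{t-})\,g_1^n(t,X_{t-},z)}{\lambda_2(t,X_{t-})\,g_2^n(t,X_{t-},z)},
\]
where $g_i^n$ is the Gaussian density of $\mathcal N(0,\bar\Sigma_i/n)$. Taking logarithms and $\mathbb P_1^n$-expectations, and compensating $\mu$ by $\nu_1^n$, I expect
\[
H(\mathbb P_1^n\|\mathbb P_2^n)=\mathbb E^{\mathbb P_1^n}\Big[\int_0^1\!\!\int \big(Y^n\log Y^n-Y^n+1\big)\,\nu_2^n(dt,dz)\Big].
\]
Computing the inner $z$-integral exactly, the $n$'s cancel inside the Gaussian KL divergence, since $\mathrm{KL}(\mathcal N(0,A/n)\|\mathcal N(0,B/n))=\mathrm{KL}(\mathcal N(0,A)\|\mathcal N(0,B))$. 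The integrand therefore equals $n\,\ell(t,X_{t-})$, and
\[
\frac1n H(\mathbb P_1^n\|\mathbb P_2^n)=\mathbb E^{\mathbb P_1^n}[\Psi].
\]
To justify this identity rigorously, including the case where local martingale terms are not true martingales, I would localize with stopping times $\tau_k$. The identity holds up to $\tau_k$, and letting $k\to\infty$ uses monotone convergence, since $\ell\ge0$ as a sum of Poisson-KL and Gaussian-KL terms, together with lower semicontinuity of entropy. Since $\ell\ge 0$, this yields the identity in $[0,\infty]$ without any integrability issue.

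\textbf{Step 2: passage to the limit.} Theorem \ref{thm:weakconvergence d dim} gives $\mathbb P_1^n\Rightarrow\mathbb P$, where $\mathbb P$ is concentrated on $C([0,1];\mathbb R^d)$. This requires the hypotheses of that theorem for the Gaussian scheme, which has $\eta=\mathcal N(0,I_d)$ and all moments finite. Item (ii) of Assumption \ref{ass:weak_conv} is assumed. Item (iii) is a moment condition that I would either take as implicit or derive from the uniform integrability hypothesis; this point must be checked. The map $\Psi$ is continuous at every continuous path in the Skorokhod topology, because $\ell$ is continuous by Assumption \ref{ass:chaos_limit}(ii) and Skorokhod convergence to a continuous limit is uniform. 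Hence $\Psi$ is continuous $\mathbb P$-a.s., and the continuous mapping theorem gives $\Psi_\#\mathbb P_1^n\Rightarrow\Psi_\#\mathbb P$. Uniform integrability from Assumption \ref{ass:chaos_limit}(iii) then upgrades this to convergence of expectations, $\mathbb E^{\mathbb P_1^n}[\Psi]\to\mathbb E^{\mathbb P}[\Psi]$, which is \eqref{eq:se}.

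\textbf{Main obstacle.} The main difficulty is Step 1: rigorously establishing the entropy formula for marked point processes with state-dependent, possibly unbounded intensities. This means verifying that the density process is a true martingale, or circumventing that by the localization and lower semicontinuity argument. The route I would try first is to prove the inequality $\le$ by localization combined with Fatou's lemma. For the inequality $\ge$, I would use the Donsker–Varadhan variational formula with the truncated log-density as test function.
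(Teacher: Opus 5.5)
Your proposal is correct and follows essentially the same route as the paper. The paper also obtains $\frac1n H(\mathbb P_1^n\|\mathbb P_2^n)=\mathbb E^{\mathbb P_1^n}[\Psi]$ from the Jacod--Girsanov entropy identity for marked point processes (Proposition~\ref{prop:entropy_formula}) together with the scale invariance of the Gaussian KL divergence, and then concludes by weak convergence, continuity of $\Psi$ at continuous paths, and the uniform integrability in Assumption~\ref{ass:chaos_limit}~(iii). Your localization argument for the entropy identity is more careful than the paper's direct computation. The point you flag is a real omission shared by the paper: it invokes Theorem~\ref{thm:weakconvergence d dim} and Lemma~\ref{lem:continuous limit} without checking Assumption~\ref{ass:weak_conv}~(iii). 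It can be repaired by stopping at the exit time of a large ball, where continuity and positivity of $\lambda_1$ give $\lambda_1\ge c_r>0$ and hence $\tr\bar\Sigma_1\le dM/c_r$.
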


\begin{remark}
Although Assumption~\ref{ass:chaos_limit} (iii) may appear abstract, it is easy to verify in practice. For instance, it holds whenever the functions $\lambda_i$ and $\bar\Sigma_i$, $i=1,2$, are uniformly bounded above and bounded away from zero.
\end{remark}
\medskip
\begin{remark}\label{rmk:twoschemes}
The general limit formula above applies to a broad class of Gaussian-mark Poissonizations. We now illustrate it on two representative schemes. 

\begin{enumerate}
    \item[{\rm (i)}]
The first scheme modifies only the mark distribution. For \(i=1,2\), let
$
\lambda_i^n(t,X_{t-})=n,
$
and define the mark kernels by
$
\eta_i^n(t,x,dz)
=
\mathcal N\!\left(0,\frac{\bar\Sigma_i(t,x)}{n}\right)(dz).
$
Under Assumption \ref{ass:weak_conv} (ii), \(\mathbb P_2^n\Rightarrow\mathbb Q\) and \(\mathbb P_1^n\Rightarrow\mathbb P\). Moreover, under conditions of Theorem \ref{thm:limit of chaos},
\begin{equation*}\label{eq:KL-limit-B-hatSigma}
\frac{1}{n}H(\mathbb P_1^n\|\mathbb P_2^n)
\longrightarrow
\mathbb E\!\left[\int_0^1
\frac12
\left(
\operatorname{tr}\!\bigl(\bar\Sigma_2(t,X_t)^{-1}\bar\Sigma_1(t,X_t)\bigr)
-d
-\log\det\!\bigl(\bar\Sigma_2(t,X_t)^{-1}\bar \Sigma_1(t,X_t)\bigr)
\right)\,dt\right].
\end{equation*}
For \(d=1\), this coincides with the relative specific entropy identified in \cite[Theorem 1]{BaUn23}.

\item[{\rm (ii)}]  The second scheme splits the covariance mismatch into a scalar part, encoded in the jump intensity, and a normalized matrix part, encoded in the mark distribution. Assume that \(\Sigma_1(t,x)\) and \(\Sigma_2(t,x)\) are positive definite for all \((t,x)\), and write
\begin{equation}\label{eq:ref_volatility}
\Sigma_2(t,x)=\lambda_2(t,x)\,\bar{\Sigma}_2(t,x),
\qquad
\lambda_2(t,x)>0,
\qquad
\bar{\Sigma}_2(t,x)\in\mathbb S_{++}^d.
\end{equation}
Define
\begin{equation}\label{eq:target_measure}
\begin{aligned}
\lambda_1(t,x)
:=
\frac{1}{d}\,\operatorname{tr}\!\bigl(\bar{\Sigma}_2(t,x)^{-1}\Sigma_1(t,x)\bigr),
\qquad
\bar{\Sigma}_1(t,x)
:=
\frac{\Sigma_1(t,x)}{\lambda_1(t,x)}.
\end{aligned}
\end{equation}
so that
$
\operatorname{tr}\!\bigl(\bar{\Sigma}_2(t,x)^{-1}\bar{\Sigma}_1(t,x)\bigr)=d.
$
Then set
\[
\lambda_i^n(t,X_{t-})=n\,\lambda_i(t,X_{t-}),
\qquad
\eta_i^n(t,x,dz)
=
\mathcal N\!\left(0,\frac{\bar{\Sigma}_i(t,x)}{n}\right)(dz),
\qquad i=1,2.
\]
Here again, \(\mathbb P_1^n\Rightarrow\mathbb P\) and \(\mathbb P_2^n\Rightarrow\mathbb Q\), and
\[
\begin{aligned}
\frac{1}{n}H(\mathbb P_1^n\|\mathbb P_2^n)
\longrightarrow
\mathbb E\!\Biggl[\int_0^1
\Biggl(
&\lambda_1(t,X_t)\log\frac{\lambda_1(t,X_t)}{\lambda_2(t,X_t)}
-\lambda_1(t,X_t)
+\lambda_2(t,X_t) \\
&\quad
-\frac{\lambda_1(t,X_t)}{2}
\log\det\!\bigl(
\bar{\Sigma}_2(t,X_t)^{-1}
\bar{\Sigma}_1(t,X_t)
\bigr)
\Biggr)\,dt\Biggr].
\end{aligned}
\]
% In particular, when \(\lambda_1=\lambda_2\equiv1\), this reduces to the specific relative entropy and corresponds to time-space homogeneous discretizations.
The trace normalization is not merely a technical choice; it also has a natural financial interpretation. Although one could instead normalize by the determinant, the trace is better suited to an interpretation in terms of aggregate market activity. Indeed, \cite{wyart2008spread} argues that short-horizon volatility is strongly linked to trading activity, with volatility per trade being largely generated by trading itself. Scaling relations linking trading activity to volatility are also derived in \cite{pohl2018trading}. Thus, if volatility is viewed as a proxy for market activity, then the total activity in a multi-asset model should be obtained by adding the contributions of the individual assets, rather than multiplying them.
\end{enumerate}

\noindent
Both schemes converge to the same continuous martingale limit, but their KL asymptotics differ
because they allocate the mismatch between $\Sigma_1$ and $\Sigma_2$ differently. The KL typically grows as
$H(\mathbb P_1^n\|\mathbb P_2^n)\sim n\times(\text{rate functional})$, with different rate functionals
depending on the Poissonization scheme.
\end{remark}

% \begin{remark}
% The trace normalization admits the most natural financial interpretation. 
% According to \cite{wyart2008spread}, short-horizon volatility is strongly linked to trading activity, in the sense that volatility per trade is largely generated by trading itself. \cite{pohl2018trading} also derives scaling relations linking trading activity to volatility.
% If volatility is viewed as a proxy for market activity, then in a multi-asset model the overall activity should be obtained by adding the contributions of the different assets, rather than multiplying them if we use determinant for example.
% \end{remark}

In \cite{Ga91}, Gantert showed that the specific relative entropy is a rate function for a large deviation principle. Here we observe that, the reciprocal specific relative entropy, a special case of \eqref{eq:se}, is also related to a large deviation principle. 
\noindent
Denote by $\omega$ a real-valued sample path with  $\omega(0)=0$. Let us define, for each $ n \in \mathbb N$,
\begin{align*} 
S_k^n(\omega)&=  \sum_{i=1}^k \left(\omega\left(\frac{i}{n}\right) -\omega\left(\frac{2i-1}{2n}\right)\right)^2+\left(\omega\left(\frac{2i-1}{2n}\right) -\omega\left(\frac{i-1}{n}\right)\right)^2, \quad k=1,\dotso n, 
\end{align*}
and counting processes
\begin{align*}
N^n_t(\omega)&=\max \{k/n : \, k \in \mathbb N, \  S_k^n(\omega) \leq t   \}, \ t \in [0,1].
\end{align*}
Under the one-dimensional Wiener measure $\mathbb P$,  $S^n_k$ is a sum of independent exponential distributed random variables, and $(N^n_t)_{t \in [0,1]}$ is a poisson process with rate $n$ and jump size $\frac{1}{n}$.

For each $n \in \mathbb N$, let us define a generator $\mathcal{L}^n$ via
\begin{equation*}
(\mathcal L_t^n \varphi)(x)
=
n 
\int_{\mathbb{R}}
\left[
\varphi\!\left(x+\frac{1}{\sqrt n} e\right)-\varphi(x)
\right]\eta(de), \quad \forall \, \varphi \in C_b^{2} (\R^d),
\end{equation*}
where $\eta$ satisfies Assumption~\ref{ass:weak_conv} (i). Let $\mathbb P^n \in \mathcal{P}(D([0,\infty);\mathbb R^d)$ be the solution  to the martingale problem for the generator $(\mathcal{L}^n_t)_{t \geq 0}$ with initial distribution $\delta_0$. For each Brownian path $\omega$, define 
\begin{align*}
R_n(\omega): =  ( X_{\cdot} \mapsto (X_{N_t(\omega)})_{t \in [0,1]} )_{\#} \mathbb P^n,
\end{align*}
where $X_{\cdot}$ is the canonical process on $D([0,\infty); \mathbb R^d)$, and thus $\omega \mapsto R_n(\omega) $ is a random variable taking values in $\mathcal M_1(D([0,1],\mathbb R^d))$.

Let us denote by $A$ the set of absolutely continuous, nondecreasing functions $a$ on $[0,1]$ such that $a(0)=0$. We will prove that $R_n$ converges to $\mathbb W$ almost surely, and satisfy a large deviation principle with a rate function given by 
\begin{align}\label{eq:ratefunction}
I(P)
=
\begin{cases}
\displaystyle
\int_0^1 (\dot a(t) \log (\dot a(t))-\dot a(t)+1) \,dt
&
\text{if } P=\operatorname{Law}\big((B_{a(t)})_{t\in[0,1]}\big)
\text{ for some } a \in A,
\\[1.2ex]
+\infty,
&
\text{otherwise}
\end{cases}
\end{align}
where $B$ is a standard $d$-dimensional Brownian motion. Actually, this rate function $I(P)$ is exactly the so-called reciprocal specific relative entropy of $P$ from Wiener measure in dimension $1$; see \cite{BaZh26}. 

\begin{proposition}[LDP for $R_n$]\label{corr:LDP-Rn}

Under the $1$-dimensional Wiener measure $\mathbb P$, the sequence $(R_n)_{n\ge1}$ satisfies a large deviation principle on $\mathcal M_1(D([0,1],\mathbb R^d))$ with speed $n$ and good rate function $I$, i.e. 
\begin{itemize}
\item[{\rm (i)}] For every open set $G \subset \mathcal M_1(D([0,1],\mathbb R^d))$, $$ \liminf_{n \to \infty} \frac{1}{n} \log \mathbb W ( R_n \in G)  \geq - \inf_{P \in G} I(P).$$
\item[{\rm (ii)}] For every closed set $F \subset \mathcal M_1(D([0,1],\mathbb R^d))$, $$ \limsup_{n \to \infty} \frac{1}{n} \log \mathbb W ( R_n \in F)  \leq - \inf_{P \in F} I(P).$$
\item[{\rm (iii)}] For any $k \in \mathbb R_+$, $\{P \in \mathcal M_1(D([0,1],\mathbb R^d)): \, I(P) \leq k\}$ is compact in weak topology. 
\end{itemize}
\end{proposition}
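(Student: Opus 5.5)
The plan is to write $R_n$ as a deterministic, $n$-dependent functional of the counting process $N^n$ alone. Then I would transfer a sample-path LDP for $N^n$ through an approximate contraction principle.

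Fix $n$ and let $\Phi_n$ map a nondecreasing path $a$ on $[0,1]$ to $\Phi_n(a):=(X_\cdot\mapsto (X_{a(t)})_{t\in[0,1]})_{\#}\mathbb P^n$. Then $R_n(\omega)=\Phi_n(N^n(\omega))$. Similarly set $\Phi(a):=\operatorname{Law}((B_{a(t)})_{t\in[0,1]})$, so that $I(P)=\inf\{J(a):\ a\in A,\ \Phi(a)=P\}$ with $J(a):=\int_0^1(\dot a\log\dot a-\dot a+1)\,dt$.

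\textbf{Step 1 (LDP for $N^n$).} Under $\mathbb W$, $N^n$ is a Poisson process with rate $n$ and jump size $1/n$, by the exponential structure of $S^n_k$. Mogulskii's theorem, or the Lynch--Sethuraman sample-path LDP, gives an LDP for $N^n$ with speed $n$ and good rate function $J$ on $A$, and $+\infty$ elsewhere. Singular increasing parts have infinite cost because $x\log x-x+1$ is superlinear. I would state this in the uniform topology, using exponential tightness together with the fact that $N^n$ is within $1/n$ of its piecewise-linear interpolation. Level sets $\{J\le k\}$ are compact in $C([0,1])$ and satisfy $a(1)\le C_k$.

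\textbf{Step 2 (the limit map).} $\Phi$ is continuous in sup norm on continuous nondecreasing paths, since $B$ is uniformly continuous on $[0,C]$ almost surely. It is also injective, because $\E|B_{a(t)}|^2=d\,a(t)$ recovers $a$. Hence the infimum defining $I$ is attained at a unique $a$, which justifies the formula \eqref{eq:ratefunction}. Property (iii) then follows, because $\{I\le k\}=\Phi(\{J\le k\})$ is the continuous image of a compact set.

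\textbf{Step 3 (uniform approximation, the main obstacle).} I need $\sup_{a:\,J(a)\le k}\, d_{\mathrm{BL}}(\Phi_n(a),\Phi(a))\to0$ for every $k$. I also need $\Phi_n(a_n)$ to be close to $\Phi_n(a)$ whenever $\|a_n-a\|_\infty\to0$, with $a_n$ lattice-valued step paths. This is the form required by the extended contraction principle of Dembo--Zeitouni (Corollary 4.2.21 / exponentially good approximations). Since the map is deterministic, "exponentially good" reduces to this uniform closeness.

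To get it, I would first try a KMT/Skorokhod coupling of the random walk $(X_{k/n})_k$ under $\mathbb P^n$ with $B$, giving $\sup_{s\le C_k}|X_s-B_s|\to0$ in probability using only the $2+\varepsilon$ moment of $\eta$. Next, the embedded walk $(X_{k/n})_k$ under $\mathbb P^n$ must be related to the time-changed process. Here the conditional law of $X$ at a deterministic time can be read off the generator, and Theorem \ref{thm:weakconvergence d dim} gives $\mathbb P^n\Rightarrow\mathbb W$, which I would upgrade to uniform convergence on compact time intervals. Finally, the uniform modulus of continuity of $B$ on $[0,C_k]$ controls replacing $a$ by a nearby $a_n$. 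Together these give the uniform bound over the level set.

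\textbf{Step 4 (conclusion).} With Steps 1--3, the extended contraction principle yields the lower bound (i) and upper bound (ii) with rate $J\circ\Phi^{-1}=I$. Almost-sure convergence $R_n\to\mathbb W$ follows from $N^n_t\to t$ uniformly almost surely together with Step 3.
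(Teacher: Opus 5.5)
Your proposal is correct and follows essentially the same route as the paper. You write $R_n=\Phi_n(N^n)$, invoke the sample-path LDP for the scaled Poisson process $N^n$, and get $\Phi_n(a_n)\to\Phi(a)$ whenever $a_n\to a$ with $J(a)<\infty$ from a coupling of $\mathbb P^n\Rightarrow\mathbb W$ that is uniform on compact time intervals (Skorokhod representation); the extended contraction principle then concludes. Your two-part Step 3 (uniform closeness on $\{J\le k\}$, plus stability when $a$ is replaced by nearby lattice step paths) just recombines into the paper's sequential condition, so the KMT detour is unnecessary, and your explicit injectivity of $\Phi$ and compactness of $\Phi(\{J\le k\})$ usefully fill in the identification of $I$ and property (iii), which the paper leaves implicit.
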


\subsection{SEMOT strong duality}
\label{sec:duality}
% Following the approach introduced in the seminal work of Touzi and Tan (AoP, 2013), establishing duality for the semimartingale optimal transport problem requires showing that the set of admissible transport plans is weakly compact. This is typically achieved when the cost function grows superlinearly with respect to the volatility coefficient.
% As discussed above, there are several possible constructions for approximating the limiting martingale. In this section, we adopt as our cost function the superlinear KL limit associated with the second scheme from Section \ref{sec:limit_of_chaos}.\\

We now explain how the previous KL asymptotics can be used to define a meaningful cost functional for MOT. The main idea is to select a Poissonization scheme for which the limiting entropy cost is superlinear with respect to the volatility coefficient, as this growth is the key tightness/compactness mechanism behind the strong duality argument.
For this reason, throughout this subsection we work with the second Poissonization scheme from Remark~\ref{rmk:twoschemes}. 

% Now, we can choose a cost in the form of generalized specific relative entropy for which MOT is well defined. Then we can prove that there is duality. In this goal, we will chose a particular Poissonization scheme: the second scheme from Section \ref{sec:limit_of_chaos}, where the KL limit is superlinear wrt to the volatility coefficient.\\

Let \(\xi:C([0,1];\mathbb R^d)\to\mathbb R\)  be a bounded Borel-measurable cost functional, continuous with respect to the uniform topology. Fix a reference volatility decomposition as in \eqref{eq:ref_volatility}.
For \(\Sigma_1\in\mathbb S_+^d\),  define $\lambda_1$ and $\bar\Sigma_1$ as in trace normalization scheme \eqref{eq:target_measure} 
and the running cost $\ell^{\mathrm{tr}}:[0,1]\times \mathbb R^d \times \mathbb{S}_d^{+}\to\mathbb R$,
\[
\ell^{\mathrm{tr}}(t,x,\Sigma_1):=
\begin{cases}
\lambda_1\log\frac{\lambda_1}{\lambda_2(t,x)}-\lambda_1+\lambda_2(t,x)-\dfrac{\lambda_1}{2}\log\det\!\left(\bar{\Sigma}_2^{-1}(t,x)\bar{\Sigma}_1\right),
& \Sigma_1\in\mathbb S_{++}^d,\\[4pt]
\lambda_2(t,x), & \Sigma_1=0,\\[4pt]
+\infty, & \Sigma_1\in\partial\mathbb S_+^d\setminus\{0\}.
\end{cases}
\]
\noindent
For \(\mathbb P\in\mathcal P\), set
$
J(\mathbb P):=
\mathbb E^{\mathbb P}\!\left[\int_0^1 \ell^{\mathrm{tr}}(t,X_t,\Sigma_{1,t}^{\mathbb P})\,dt + \xi(X_{1\wedge\cdot})\right].
$
The primal value function is then
\begin{equation}
\label{eq:primal}
V(\mu_0,\mu_1):=
\inf_{\mathbb P\in\mathcal P(\mu_0,\mu_1)} J(\mathbb P).
\end{equation}
and its dual formulation is
\begin{equation}
\mathcal{V}(\mu_0,\mu_1) := \sup_{\phi_1 \in C_b(\mathbb{R}^d)}
\left\{ \mu_0(\phi_0) - \mu_1(\phi_1) \right\},
\label{eq:dual}
\end{equation}
where
\[
\phi_0(x) := \inf_{\substack{\mathbb{P} \in \mathcal{P} \\ X_0 = x \ \mathbb{P}\text{-a.s.}}}
\mathbb{E}^{\mathbb{P}}\!\left[
\int_0^1 \ell^{\mathrm{tr}}(t,X_t,\Sigma_{1,t}^{\mathbb P})\,dt + \phi_1(X_1)
 + \xi(X_{1\wedge\cdot})\right].
\]

\begin{remark}
Here, the cost $\xi$ can encode a general financial payoff. For example, one can take an Asian option payoff such that
\[
\xi(X_{T\wedge\cdot})
=
\left(\frac{1}{T}\int_0^T X_t\,dt-K\right)_+ ,
\]
where $K>0$ denotes the strike price.
\end{remark}

\begin{assumption}\label{ass:ref_process_duality}
There exist constants \(0<\underline b\le \overline b<\infty\) and \(M>0\) such that
\[
\underline b\le \lambda_2(t,x)\le \overline b,
\qquad
\bar\Sigma_2(t,x)\preceq M I_d,
\qquad (t,x)\in[0,1)\times\mathbb R^d.
\]
We also assume that $\bar \Sigma_2$ an $\lambda_2$ are continuous.
\end{assumption}

\begin{theorem}[Strong Duality]
\label{theorem:strong duality}
Let $\mu_0,\mu_1\in\mathcal{M}_1(\mathbb{R}^d)$. Assume $\mathcal{P}(\mu_0,\mu_1) \not = \emptyset $ and $V(\mu_0,\mu_1)<\infty$.  Then under Assumption \ref{ass:ref_process_duality}, duality holds
\[
V(\mu_0,\mu_1)=\mathcal{V}(\mu_0,\mu_1),
\]
and the primal infimum is attained. 
\end{theorem}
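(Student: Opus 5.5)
We follow the Tan--Touzi approach to duality for semimartingale optimal transport: lift the problem to measures on path space, prove compactness and lower semicontinuity there, and obtain duality from a convex-analytic argument on the marginal constraint.

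\emph{Step 1: growth of the running cost.} We show that $\ell^{\mathrm{tr}}$ is convex and lower semicontinuous in $\Sigma_1$, and superlinear in $\Sigma_1$ uniformly in $(t,x)$. Write $\lambda_1=\frac1d\tr(\bar\Sigma_2^{-1}\Sigma_1)$ and $\bar\Sigma_2^{-1/2}\bar\Sigma_1\bar\Sigma_2^{-1/2}=:S$, so that $\tr S=d$. Then $-\frac{\lambda_1}{2}\log\det S\ge 0$ by AM--GM, and
\[
\ell^{\mathrm{tr}}\ \ge\ \lambda_1\log\frac{\lambda_1}{\overline b}-\lambda_1+\underline b .
\]
Under Assumption~\ref{ass:ref_process_duality}, $\tr\Sigma_1\le dM\lambda_1$. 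Hence $\ell^{\mathrm{tr}}\ge \theta(\tr\Sigma_1)-C$ for some superlinear $\theta$, and $\ell^{\mathrm{tr}}$ is bounded below.

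For convexity, rewrite $\ell^{\mathrm{tr}}$ as the perspective-type expression
\[
\lambda_1\log\lambda_1-\tfrac{\lambda_1}{2}\log\det(\Sigma_1/\lambda_1)+\text{affine}
=\tfrac{d+2}{2}\lambda_1\log\lambda_1-\tfrac{\lambda_1}{2}\log\det\Sigma_1+\dots
\]
Here $\lambda_1$ is linear in $\Sigma_1$, and $-\lambda\log\det\Sigma$ is jointly convex in $(\lambda,\Sigma)$ only on suitable sets, so this step needs care. The alternative we would try is to identify $\ell^{\mathrm{tr}}$ with a Legendre-type sup: note that it is the limit of relative entropies, which are jointly convex, so it should be expressible as a supremum of affine functions of $(\lambda_1,\lambda_1\bar\Sigma_1)=(\lambda_1,\Sigma_1)$. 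The boundary values $\lambda_2$ at $0$ and $+\infty$ elsewhere on $\partial\mathbb S^d_+$ are exactly the lsc envelope.

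\emph{Step 2: compactness and existence.} Following \cite{henrylabordere2017modelfree}, we lift each $\mathbb P\in\mathcal P$ to a measure on $(X,A)$, with $A$ the quadratic variation. The set $\{\mathbb P\in\mathcal P(\mu_0,\mu_1):J(\mathbb P)\le C\}$ is tight, and limits remain continuous martingales with absolutely continuous $A$. We would establish this as follows.
\begin{itemize}
\item The bound $\E\int_0^1\theta(\tr\Sigma_t)\,dt\le C'$ gives uniform integrability of $\tr\Sigma$.
\item This yields Kolmogorov/Aldous-type tightness of $X$ and equicontinuity of $A$ in $L^1$.
\item The martingale property passes to the limit by uniform integrability.
\item Absolute continuity of the limiting $A$ follows from de la Vallée-Poussin.
\end{itemize}
Lower semicontinuity of $\mathbb P\mapsto \E^{\mathbb P}\int\ell^{\mathrm{tr}}\,dt$ then follows from convexity and lower semicontinuity of $\ell^{\mathrm{tr}}$ in $\Sigma$, continuity in $(t,x)$ (from continuity of $\lambda_2,\bar\Sigma_2$), and a Jensen/Young-measure argument on $\dot A$. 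Since $\xi$ is bounded and continuous, $J$ is lsc. The marginal constraints are closed, so the infimum is attained by the direct method.

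\emph{Step 3: duality.} We view $\mu_1\mapsto V(\mu_0,\mu_1)$ as a map on $\overline{\mathcal M}(\mathbb R^d)$, extended by $+\infty$ off $\mathcal M_1$.
\begin{itemize}
\item It is convex: mix the corresponding measures. This uses that $\ell^{\mathrm{tr}}$ is linear under mixtures of laws and that $\mathcal P$ is convex.
\item It is lsc for the weak topology: this is Step 2 applied with varying $\mu_1$, using the tightness bound uniformly.
\end{itemize}
Fenchel--Moreau on the pairing $(\overline{\mathcal M},C_b)$ then gives $V=V^{**}$. Computing $V^*(-\phi_1)$ reduces to
\[
\sup_{\mathbb P\in\mathcal P(\mu_0)}\E\Big[-\phi_1(X_1)-\int_0^1\ell^{\mathrm{tr}}\,dt-\xi\Big].
\]
A measurable-selection/dynamic-programming argument interchanges the infimum over $\mathbb P$ with the integral over $\mu_0$, yielding $-\mu_0(\phi_0)$. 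Hence $V=\mathcal V$.

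The \textbf{main obstacle} is Step 2's lower semicontinuity together with the closedness of $\mathcal P$ under the growth bound. It hinges on proving convexity of $\ell^{\mathrm{tr}}$ in $\Sigma_1$, since the trace normalization makes $\bar\Sigma_1$ depend nonlinearly on $\Sigma_1$. We would first verify convexity by computing the Hessian in the eigenbasis of $S$, and fall back on lsc convex envelopes if it fails.
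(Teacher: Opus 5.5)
Your architecture is the paper's: lift to $(X,A)$; get tightness from the uniform bound on $\E\int_0^1\lambda_1\log\lambda_1\,dt$, which is your superlinear lower bound since $\tr\Sigma_1\le dM\lambda_1$; prove lower semicontinuity of the lifted cost by a Jensen argument on $\dot A$; get convexity of $V$ by mixing; apply Fenchel--Moreau on $(\overline{\mathcal M}(\mathbb R^d),C_b)$; and identify the conjugate through $\mu_0(\phi_0)$ (Tan--Touzi's Lemma 3.5).

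\textbf{The gap.} The step you single out as the main obstacle is never established: convexity of $\Sigma_1\mapsto\ell^{\mathrm{tr}}(t,x,\Sigma_1)$. It is indispensable. Jensen applied to $\frac1\varepsilon(A_{s+\varepsilon}-A_s)$, or equally your Young-measure argument, is what gives lower semicontinuity of $\bar J$, and hence existence and lower semicontinuity of $V$. Your fallback of replacing $\ell^{\mathrm{tr}}$ by its lsc convex envelope would change the cost, so it would prove duality for a different problem.

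\textbf{The missing idea: perspective structure.} No Hessian computation is needed. For fixed $(t,x)$ write
\[
\ell^{\mathrm{tr}}(t,x,\Sigma_1)=\psi\bigl(\lambda_1(\Sigma_1)\bigr)+\lambda_1(\Sigma_1)\,f\bigl(\Sigma_1/\lambda_1(\Sigma_1)\bigr),
\]
where
\[
\psi(\lambda)=\lambda\log(\lambda/\lambda_2)-\lambda+\lambda_2,
\qquad
f(X)=-\tfrac12\log\det(\bar\Sigma_2^{-1}X),
\]
with $f=+\infty$ on $\partial\mathbb S^d_+$.

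\begin{itemize}
\item Since $f$ is convex, its perspective $(\lambda,\Sigma)\mapsto\lambda f(\Sigma/\lambda)$ is jointly convex for $\lambda>0$.
\item This is where your expansion went wrong. The term $\frac d2\lambda\log\lambda$ is exactly what compensates the non-convexity of $-\frac\lambda2\log\det\Sigma$, since $\frac d2\lambda\log\lambda-\frac\lambda2\log\det\Sigma=\lambda\cdot\bigl(-\frac12\log\det(\Sigma/\lambda)\bigr)$. So you must not split them.
\item Composing the perspective with the linear map $\Sigma_1\mapsto(\lambda_1(\Sigma_1),\Sigma_1)$, and adding the convex function $\psi\circ\lambda_1$, gives convexity of $\ell^{\mathrm{tr}}$.
\item Because $\bar\Sigma_2^{-1}\succ0$, we have $\lambda_1(\Sigma_1)=0$ if and only if $\Sigma_1=0$. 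Hence the prescribed boundary values ($\lambda_2$ at $0$, $+\infty$ on $\partial\mathbb S^d_+\setminus\{0\}$) are exactly those of this extended-value representation, and convexity holds on all of $\mathbb S^d_+$.
\end{itemize}

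This also vindicates your entropy intuition. Under trace normalization the trace term of the Gaussian relative entropy vanishes, so
\[
\ell^{\mathrm{tr}}=\psi(\lambda_1)+\lambda_1 H\bigl(\mathcal N(0,\Sigma_1/\lambda_1)\,\|\,\mathcal N(0,\bar\Sigma_2)\bigr).
\]
That is a Poisson-rate entropy plus the perspective of a convex function.

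With convexity in hand, the rest of your plan goes through as in the paper. You also need joint lower semicontinuity of $\ell^{\mathrm{tr}}$ in $(t,x,\Sigma_1)$. It follows from continuity of $\lambda_2$ and $\bar\Sigma_2$ by the three-case analysis $\Sigma_1\in\mathbb S^d_{++}$, $\Sigma_1=0$, $\Sigma_1\in\partial\mathbb S^d_+\setminus\{0\}$.
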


The proof follows the standard convex-analytic strategy for transport duality. Fixing $\mu_0$, we first show that the map
$
\mu_1 \mapsto V(\mu_0,\mu_1)
$
is convex and lower semicontinuous, which allows us to apply a Fenchel--Moreau type duality theorem on the space of signed measures. Lower semicontinuity comes from a tightness and compactness argument, while convexity is obtained by mixing admissible laws. Once these two properties are established, the convex conjugate is identified through the auxiliary control problem defining $\phi_0$, which yields the desired strong duality formula.

\section{Numerical experiments}

In this section, we consider a special case \(\xi \equiv 0\) in the optimization problem \eqref{eq:primal}. The reference volatility is Brownian, that is, $\lambda_2 \equiv 1, ~ \bar\Sigma_2\equiv I_d$.  We first derive the coupled HJB--FP system, then present a Sinkhorn-type algorithm and its numerical simulations in the one- and two-dimensional settings. Finally, we formally prove that the dual functional is monotone increasing, which ensures the convergence of the algorithm.

\subsection{Dynamic programming and HJB characterization}
\label{sec:HJB}
Due to the strong duality established in Theorem~\ref{theorem:strong duality}, we have the following dynamic value function:

\[
\phi(t,x) := \inf_{\mathbb{P} \in \mathcal{P}} 
\mathbb{E}^{\mathbb{P}} \left[
\int_t^1 \ell^{\mathrm{tr}}(\Sigma_{1,s}^{\mathbb P})\, ds 
+ \phi_1(X_1)\Big| ~X_t=x\right].
\]
\noindent
Assuming $\phi$ is locally bounded and the Hamiltonian continuous, $\phi$ is a viscosity solution of the dynamic programming equation
\[
\partial_t \phi(t,x)
 +
\inf_{\Sigma_1\in\mathbb{S}_d^{+}}
\left\{
\ell^{\mathrm{tr}}(\Sigma_1)
+
\frac{1}{2}\operatorname{Tr}
\big(\Sigma_1 D_x^2 \phi\big)
\right\}
= 0,
\qquad (t,x)\in [0,1)\times \mathbb{R}^d.
\]

\noindent
with terminal condition
$
\phi(1,x) = \phi_1(x).
$

\begin{remark}[Local boundedness of $\phi$]
Consider the constant control
$
\Sigma_1(s) \equiv I_d$ for $s\in[t,1].$
Given $\ell^{\mathrm{tr}}(I_d)=0$, evaluating the cost functional at this admissible control yields
$
\phi(t,x)
\le
\mathbb E\!\left[
\phi_1(X_1)
\right].
$
Since $\phi_1$ is bounded, then
$
\phi(t,x)
\le
\|\phi_1\|_\infty.
$
Moreover, $\ell^{\mathrm{tr}}$ nonnegative yields $\phi(t,x)\ge -\|\phi_1\|_\infty$.
Hence $\phi$ is locally bounded on $[0,1]\times\mathbb R^d$.
\end{remark}

\begin{remark}[Continuity of the Hamiltonian]
\label{rem:continuity_hamiltonian}
The function
\[
H(\Gamma):=\inf_{\Sigma_1\in\mathbb S^d_+}
\left\{\frac12\tr(\Sigma_1\Gamma)+\ell^{\mathrm{tr}}(\Sigma_1)\right\},
\qquad \Gamma\in\mathbb S^d,
\]
is concave, being the infimum of affine functions of $\Gamma$. To prove continuity, it is enough to show that $H$ is finite everywhere.
Indeed, since $\bar\Sigma_2 = I_d$,
we have $\tr(\bar\Sigma_1)=d$, hence $\det(\bar\Sigma_1)\le 1$ by the arithmetic--geometric mean inequality, and thus 
$
-\frac{\lambda_1}{2}\log\det(\bar\Sigma_1)\ge 0.
$
Moreover, we also get
$
\tr(\Sigma_1\Gamma)\ge \lambda_1 d\,\lambda_{\min}(\Gamma),
$ where $\lambda_{\min}(\Gamma)$ denotes the smallest eigenvalue of $\Gamma$.
Therefore
\[
\frac12\tr(\Sigma_1\Gamma)+\ell^{\mathrm{tr}}(\Sigma_1)
\ge
\lambda_1\log\lambda_1-\lambda_1+1
+\frac{\lambda_1 d}{2}\lambda_{\min}(M),
\]
and the right-hand side is bounded from below in $\lambda_1\ge0$ because
$\lambda_1\log\lambda_1$ is superlinear. Since also $\ell^{\mathrm{tr}}(0)=1<\infty$, we obtain $H(\Gamma)\in\mathbb R$ for every $\Gamma$, thus $\operatorname{dom}(H)=\mathbb S^d$.
\end{remark}

\begin{remark}
Although the admissible set is $\mathbb S^d_{+}$, every minimizer belongs to $\mathbb S^d_{++}$. On $\partial\mathbb S^d_{+}\setminus\{0\}$, the objective is infinite  and $\Sigma_1=0$ cannot be optimal. Indeed, for any $\Gamma\in\mathbb S^d$, since $\Sigma_1=\lambda_1\,\bar\Sigma_1$ gives
$
\ell^{\mathrm{tr}}(\Sigma_1)+\frac12\tr(\Sigma_1\Gamma)
=
\lambda_1\log\lambda_1-\lambda_1+1+\frac{\lambda_1}{2}\tr(\bar\Sigma_1\Gamma),
$
whose minimum over $\lambda_1>0$ is attained at
$
\lambda_1=\exp\!\Big(-\tfrac12\tr(\bar\Sigma_1\Gamma)\Big)>0,
$
with value
$1-\exp\!\big(-\tfrac12\tr(\bar\Sigma_1\Gamma))<1=\ell^{\mathrm{tr}}(0).
$\newline
\end{remark}

We now compute the infimum explicitly.
For each $(t,x)\in[0,1)\times\mathbb R^d$, set
$
\Gamma(t,x):=D_x^2\phi(t,x).
$
Then
\[
H\bigl(D_x^2 \phi(t,x)\bigr)
=
1-\exp\!\Big(
-\tfrac12\Big[d(1-\mu(t,x))+\log\det\big(\Gamma(t,x)+\mu(t,x)I_d\big)\Big]
\Big),
\]
where, for each fixed $(t,x)$, the quantity $\mu(t,x)\in\mathbb R$ is the unique scalar, Lagrange multiplier, such that
\[
\tr\Big(\Gamma(t,x)+\mu(t,x)I_d\Big)^{-1}=d,
\qquad
\Gamma(t,x)+\mu(t,x)I_d\succ0.
\]
\noindent
The optimal diffusion matrix is then given by
\[
\Sigma_1^*(t,x)
=
\left(1-H\bigl(D_x^2 \phi(t,x)\bigr) \right)\,
\big(\Gamma(t,x)+\mu(t,x)I_d\big)^{-1}.
\]
\noindent
The Fokker--Planck equation characterizing the marginal distributions of the
optimal martingale reads
\[
\partial_t p
= \frac{1}{2}\,\nabla_x^2 : \bigl(\Sigma_1^{*} p\bigr),
\qquad
\nabla_x^2 : \bigl(\Sigma_1^{*} p\bigr)
= \sum_{i,j=1}^d
\partial_{x_i x_j}\Bigl(\Sigma_{1,ij}^{*}(t,x)\,p(t,x)\Bigr).
\]

\subsection{1D Case}

In this section, we present a Sinkhorn-type algorithm for the numerical solution of the coupled HJB--Fokker--Planck system introduced above, in the one-dimensional periodic setting. The goal is to compute simultaneously the optimal volatility surface and the associated density $p(t,x)$ arising from the stochastic control problem.

The key point is that the coupled HJB--Fokker--Planck system can be interpreted as a fixed-point problem on the terminal potential. Starting from a guess $\phi_1$, one first solves the backward HJB equation to recover the value function $\phi$. This determines the optimal diffusion coefficient $\Sigma_1^*$, or equivalently $\sigma$ in dimension one. Keeping this diffusion fixed, one then solves the forward Fokker--Planck equation and obtains the density $p$, in particular its terminal value $p_1:=p(T,\cdot)$. The terminal potential is then updated by comparing $p_1$ with the prescribed target marginal $\mu_1$ through a Sinkhorn-type correction:
\[\phi_1^{\rm new}:=\phi_1+ \eta \log(p_1/\mu_1).\]
 In this way, the numerical procedure can be viewed as the search for a fixed point of the mapping
\[
\phi_1 \mapsto \phi \mapsto \sigma \mapsto p \mapsto p_1 \mapsto \phi_1^{\rm new},
\]
where the last arrow is the Sinkhorn-type update. At convergence, the induced terminal density satisfies $p_1=\mu_1$, so that both the HJB--Fokker--Planck system and the terminal marginal constraint are satisfied simultaneously.

We now describe the discretization of this procedure. The problem is posed on the time--space cylinder $[0,T]\times\mathbb T$, where $\mathbb T=\mathbb R/\mathbb Z$ denotes the one-dimensional torus, and we use the uniform grids
$
t_n=n\Delta t$, for $n=0,\dots,N_t,
$
and
$
x_i=i\Delta x$, for $i=0,\dots,N_x-1,
$
with
$
\Delta t=\frac{T}{N_t}
$
and
$
\Delta x=\frac{1}{N_x}.
$
The discrete unknowns are
\[
\phi_i^n \approx \phi(t_n,x_i),
\qquad
p_i^n \approx p(t_n,x_i),
\qquad
\sigma_i^n \approx \sigma(t_n,x_i).
\]
\noindent
Since the spatial domain is periodic, periodic boundary conditions are imposed throughout the numerical solution of the PDE system. The backward HJB equation is solved implicitly in time, and the nonlinearity induced by the term $\exp\!\left(-\tfrac12\Delta\phi\right)$ is treated by Newton's method, using a sparse periodic finite-difference Jacobian. Once $\phi$ has been computed, the forward Fokker--Planck equation is also solved implicitly in time. At each step, this yields a linear periodic tridiagonal system, assembled from the diffusion coefficient $\sigma$ obtained in the backward pass and solved directly with `spsolve`.

Before performing the Sinkhorn update, we smooth the log-ratio in order to improve numerical stability. More precisely, if
$
r=\log\!\left(\frac{p_1}{\mu_1}\right),
$
then at each smoothing iteration we replace $r$ by its local weighted average
$
r_i \leftarrow \frac14\,r_{i-1}+\frac12\,r_i+\frac14\,r_{i+1}.
$
We also impose a lower bound of $10^{-1}$ on the densities in order to avoid numerical instabilities when the computed density becomes too small.

The full one-dimensional algorithm is summarized in the pseudocode below.
The parameter $\eta$ plays the role of a relaxation parameter in the Sinkhorn update and is crucial for stability. From the numerical viewpoint, it controls the size of the correction applied to the terminal potential at each outer iteration. From the theoretical viewpoint, since the dual correction is linked to the time-discretized entropy term, it is natural to choose $\eta$ of the same order as $\Delta t$. In all our experiments, we therefore take $\eta$ close to $\Delta t$.

In the 1D test case (Figure~\ref{fig1:results}), the initial marginal \(\mu_0\) is chosen as a periodized Gaussian density on the torus, centered at \(x=0.5\) with standard deviation \(0.05\). The terminal marginal \(\mu_1\) is defined as the mixture
\[
\mu_1(x)
=
q\,\mathcal{N}_{\mathbb{T}}(x;0.5,s_0)
+\frac{1-q}{2}\,\mathcal{N}_{\mathbb{T}}(x;0.5-d_1,s_1)
+\frac{1-q}{2}\,\mathcal{N}_{\mathbb{T}}(x;0.5+d_1,s_1),
\]
with \(q=0.6\), \(d_1=0.2\), \(s_0=0.1\), and \(s_1=0.05\), where \(\mathcal{N}_{\mathbb{T}}\) denotes the periodized Gaussian density on the torus. Both \(\mu_0\) and \(\mu_1\) are normalized to have unit mass. The spatial domain is \([0,1]\). We set \(T=0.1\), \(N_x=128\), and \(N_t=80\), which gives
$
\Delta x=0.008$ and
$\Delta t=0.001$. Finally, $\eta=0.001$.
We observe strong convergence after roughly 200 iterations. The $L^1$ error plot suggests an exponential rate of convergence.

\begin{remark}
In this section, we work with $T=0.1$ as on longer time horizon the diffusion already brings the terminal density very close to the target after one iteration, making the test case uninformative.
\end{remark}

\subsection{2D case}
In the two-dimensional setting, the same fixed-point strategy is used, but additional stabilization is needed in order to maintain a robust convergence of the outer Sinkhorn iterations. In particular, we use an adaptive choice of the parameter $\eta$: the learning rate is decreased whenever the $L^1$ error increases from one iteration to the next, and increased slightly otherwise. In addition, we incorporate Anderson acceleration, which replaces the standard update by a weighted combination of the last $m$ candidate iterates. The weights are computed by solving a small regularized linear system based on the residual history, so as to reduce the residual in the span of previous updates. These modifications are important in dimension two, where the iterations are noticeably slower and each PDE solve is more costly than in the one-dimensional case. 
Here (Figure ~\ref{fig2:results}), the initial and terminal marginals are chosen as periodized anisotropic Gaussian densities on the torus $\mathbb T^2$. The initial density $\mu_0$ is centered at $(0.5,0.5)$ and aligned with the coordinate axes, with standard deviations $0.03$ and $0.07$ in the two spatial directions. The target density $\mu_1$ is centered at the same point, with standard deviations $0.08$ and $0.12$, and is rotated by an angle $\theta=\pi/6$. Both marginals are periodized on $[0,1]^2$ and normalized so as to define probability densities.
For the discretization, we take $T=0.1$, $N_x=64$, and $N_t=40$, which gives
$
\Delta x=0.016
$
and
$
\Delta t=0.0025.
$
The adaptive parameter $\eta$ is initialized at $0.005$.

With these choices, we again observe convergence of the algorithm, here after roughly 100 outer iterations.

\begin{algorithm}[H]
\caption{Sinkhorn Iterations for Periodic HJB--Fokker--Planck System in 1D}
\begin{algorithmic}[1]
\Require Initial density $\mu_0$, target density $\mu_1$, time step $\Delta t$, space step $\Delta x$, number of space points $N_x$, number of time steps $N_t$
\Ensure Approximate terminal potential $\phi_1$

\State Initialize $\phi_1 \gets 0$
\For{$k = 1$ to $N_{\mathrm{outer}}$}

    \State \textbf{Backward step: solve HJB equation with terminal condition
    $
    \phi(1,x)=\phi_1(x)
    $
}
    \State Compute $\{\phi^n\}_{n=0}^{N_t}$ by implicit time stepping:
    $
    \phi^{n+1} - \phi^n = \Delta t \left(1 - \exp\!\left(-\tfrac12 \Delta \phi^{n+1}\right)\right)
    $
    \State \textbf{Compute diffusion coefficients}
    Compute $\{\phi^n\}_{n=0}^{N_t}$
   by $\sigma^n  = \exp\!\left(-\tfrac12 \Delta \phi^n\right)$
    \State \textbf{Forward step: solve Fokker--Planck equation}
    \State Starting from $p^0=\mu_0$, compute $\{p^n\}_{n=0}^{N_t}$ by
    $
    p^{n+1} - p^n = \frac{\Delta t}{2}\Delta (\sigma^{n+1} p^{n+1})
    $
    \State \textbf{Evaluate mismatch at final time}
    \State $p^1 \gets p^{N_t}$
    \State Compute error
    $
    E_k \gets \|p^1 - \mu_1\|_{L^1}
    $
    \If{$E_k < \varepsilon$}
        \State \Return $\phi_1$
    \EndIf

    \State \textbf{Update terminal potential}
    \State Compute
    $
    r \gets \log\!\left(\frac{p^1}{\mu_1}\right)
    $
    \State Replace $r$ by its periodic smoothing
    \State Update
    $
    \phi_1 \gets \phi_1 + \eta\, r
    $
\EndFor
\State \Return $p^1,\{\sigma^n\}_{n=0}^{N_t}$
\end{algorithmic}
\end{algorithm}

\begin{figure}[htbp]
    \centering

    % First row
    \begin{subfigure}[t]{0.48\textwidth}
        \centering
        \includegraphics[width=\linewidth]{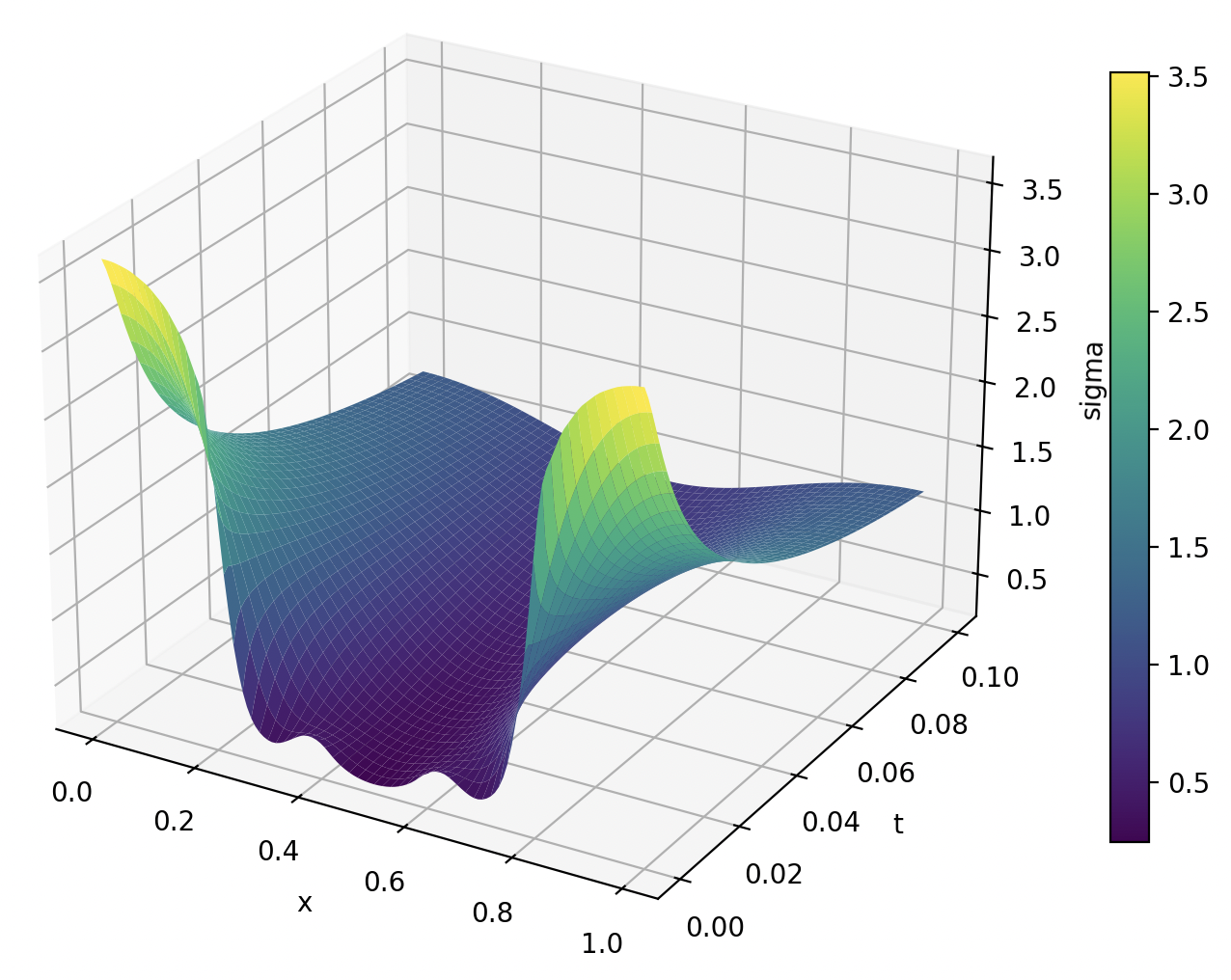}
        \caption{Final volatility surface}
        \label{fig1:a}
    \end{subfigure}
    \hfill
    \begin{subfigure}[t]{0.48\textwidth}
        \centering
        \includegraphics[width=\linewidth]{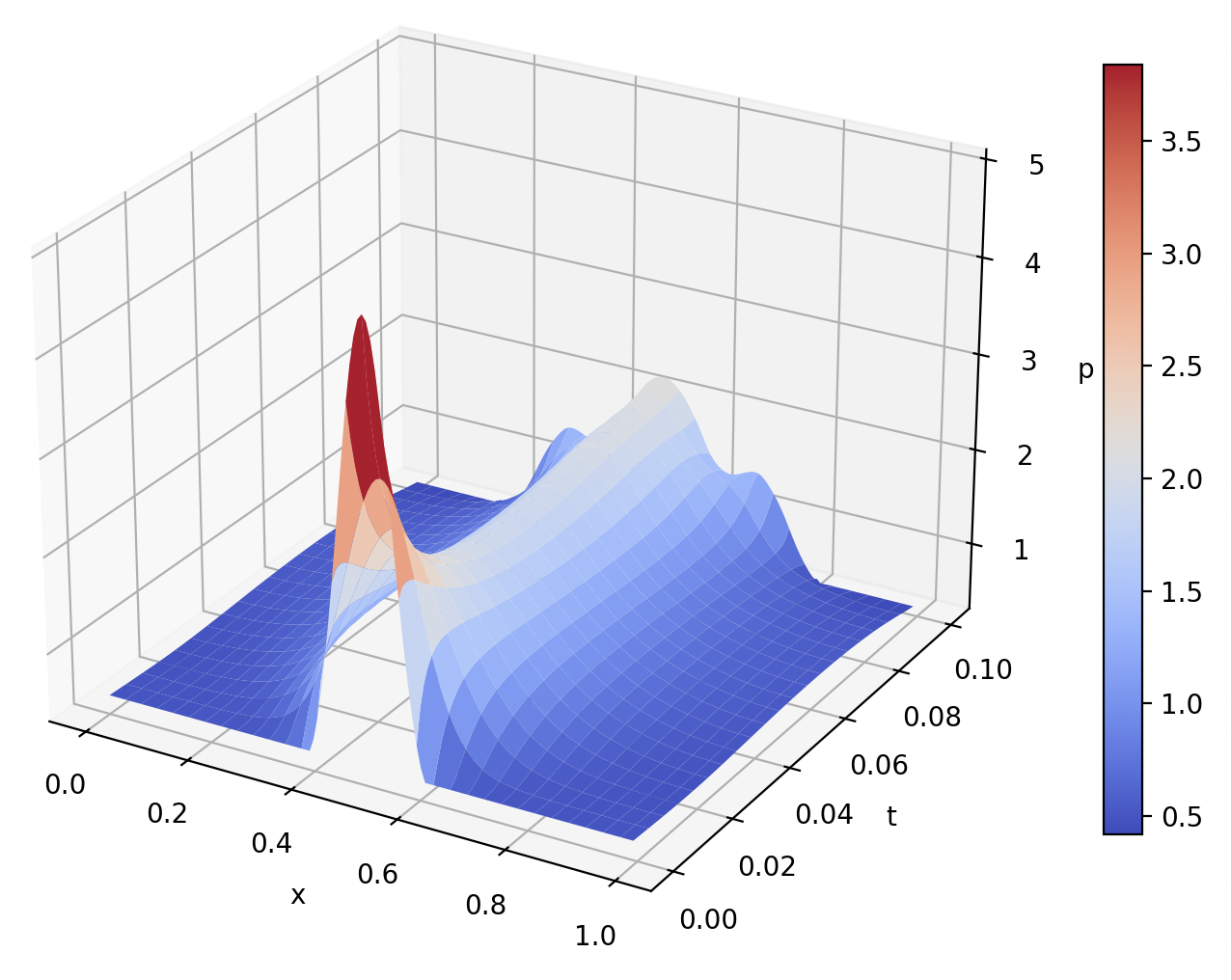}
        \caption{Marginal surface $p(t,x)$}
        \label{fig1:b}
    \end{subfigure}

    \vspace{0.5cm}

    % Second row
    \begin{subfigure}[t]{0.48\textwidth}
        \centering
        \includegraphics[width=\linewidth]{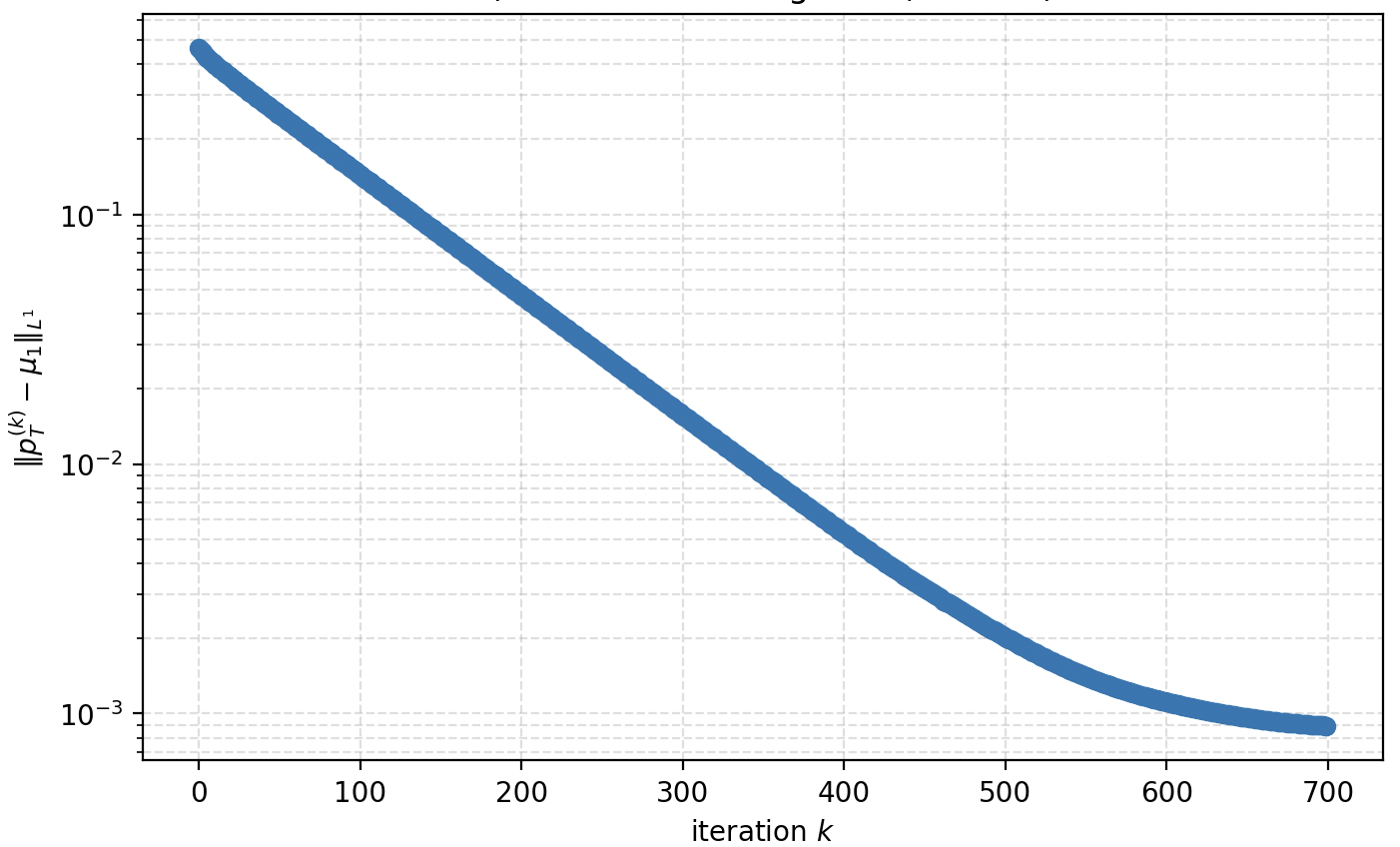}
        \caption{Sinkhorn convergence of the $L^1$ error}
        \label{fig1:c}
    \end{subfigure}
    \hfill
    \begin{subfigure}[t]{0.48\textwidth}
        \centering
        \includegraphics[width=\linewidth]{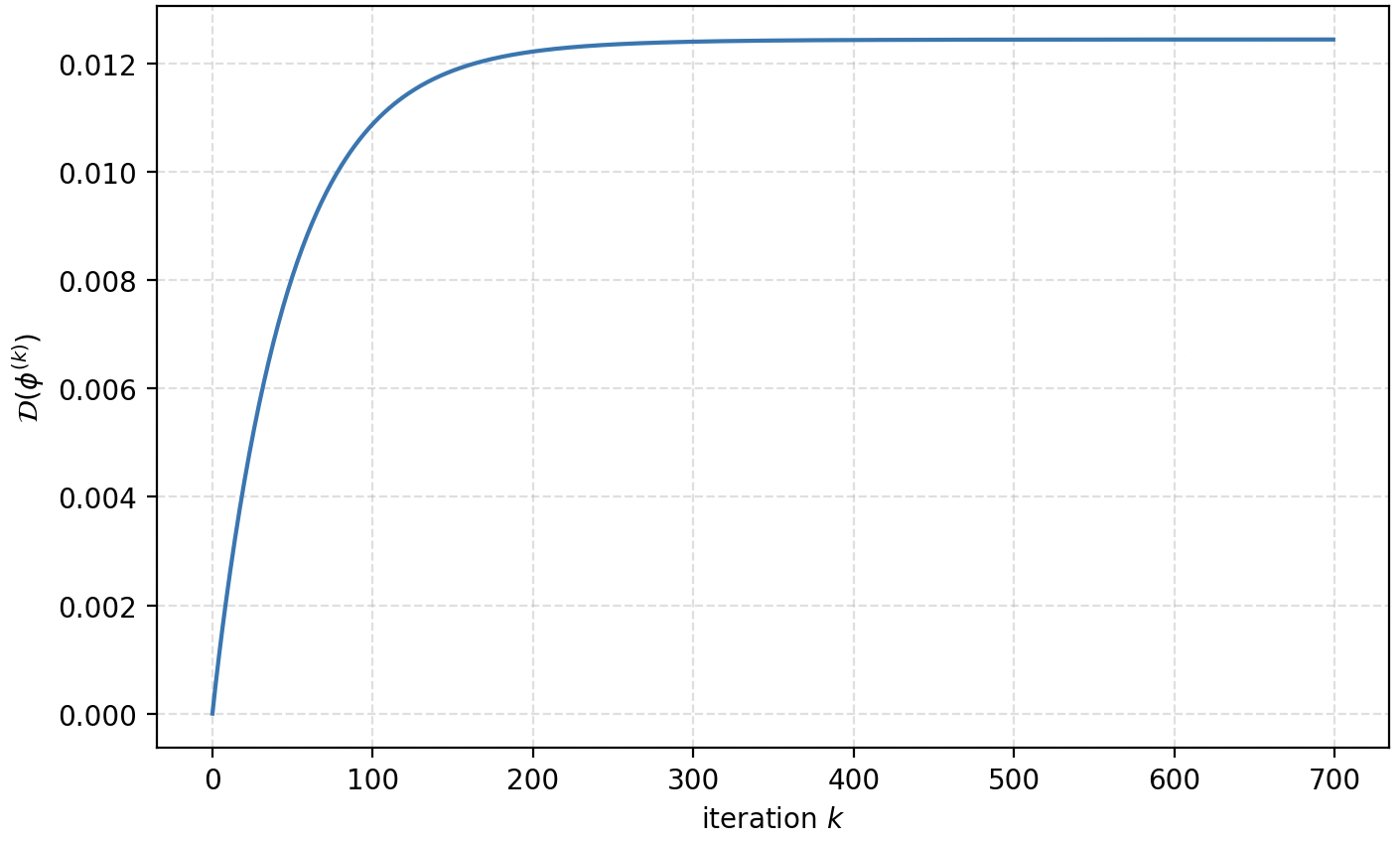}
        \caption{Monotone dual functional $\mathcal{D}(\phi^{(k)})$}
        \label{fig1:d}
    \end{subfigure}

    \vspace{0.5cm}

    % Third row centered
    \begin{subfigure}[t]{0.48\textwidth}
        \centering
        \includegraphics[width=\linewidth]{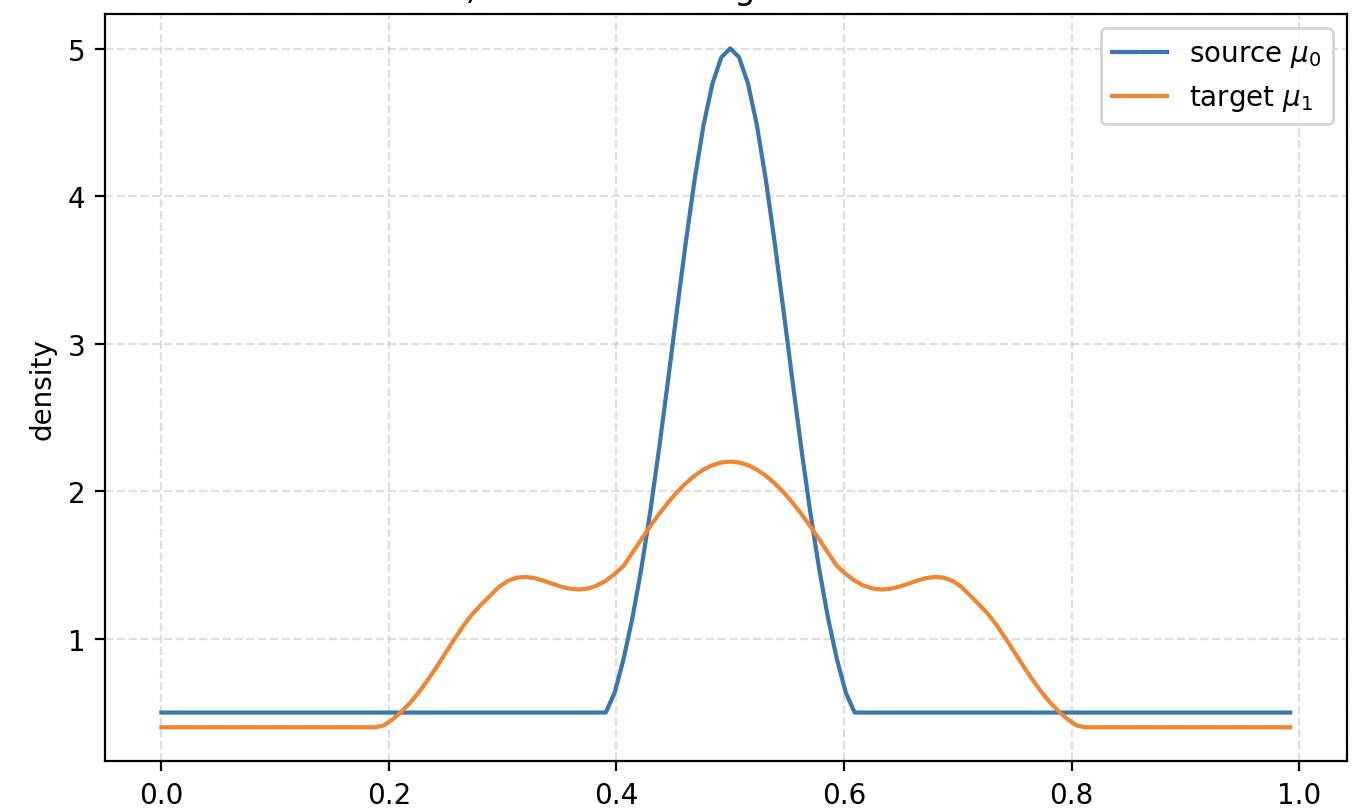}
        \caption{Source and target distributions}
        \label{fig1:e}
    \end{subfigure}
    
    \caption{Gaussian to Gaussian mixture with Brownian reference volatility}
    \label{fig1:results}
\end{figure}

\begin{figure}[htbp]
    \centering

    % First row
    \begin{subfigure}[t]{0.48\textwidth}
        \centering
        \includegraphics[width=\linewidth]{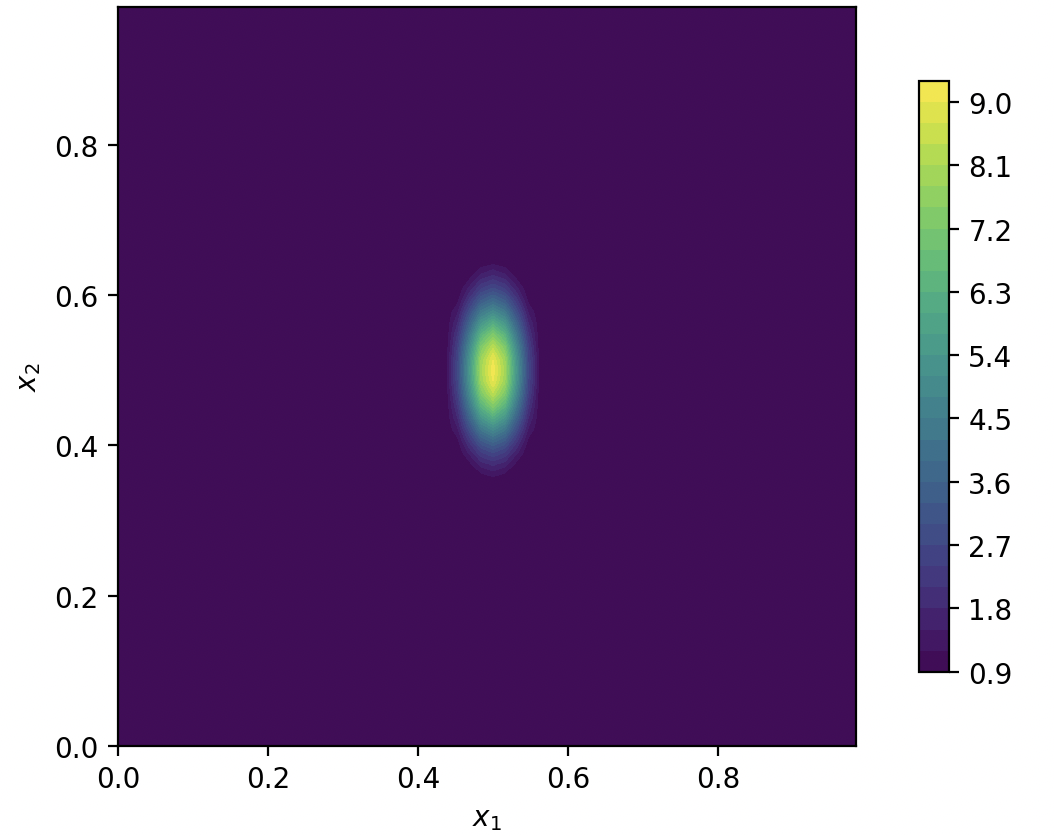}
        \caption{Source marginal}
        \label{fig2:a}
    \end{subfigure}
    \hfill
    \begin{subfigure}[t]{0.48\textwidth}
        \centering
        \includegraphics[width=\linewidth]{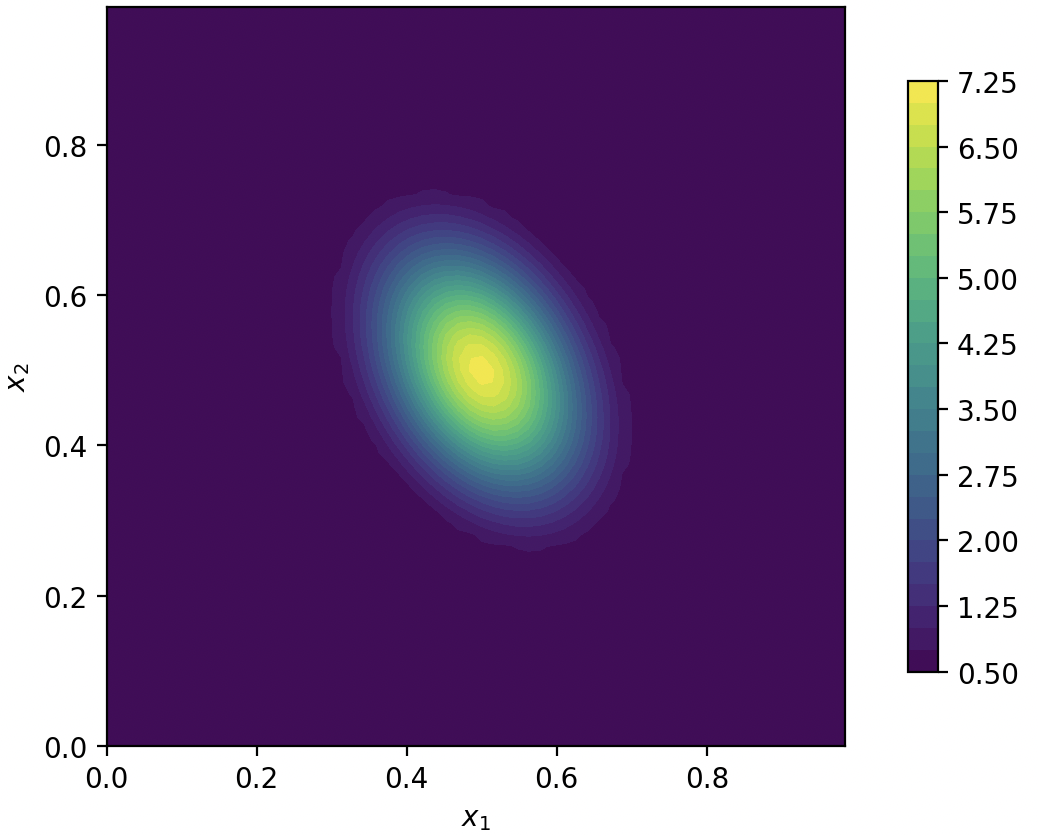}
        \caption{Target marginal}
        \label{fig2:b}
    \end{subfigure}

    \vspace{0.5cm}

    % Second row
    \begin{subfigure}[t]{0.48\textwidth}
        \centering
        \includegraphics[width=\linewidth]{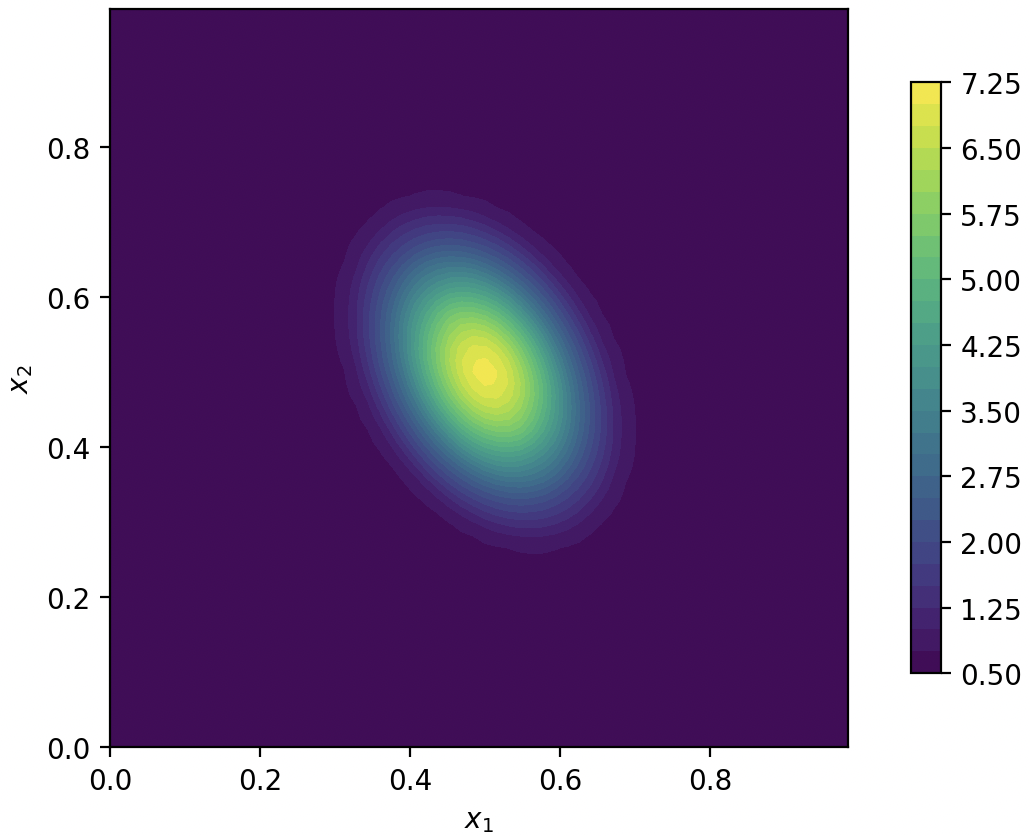}
        \caption{Final marginal $p_1$}
        \label{fig2:c}
    \end{subfigure}
    \hfill
    \begin{subfigure}[t]{0.48\textwidth}
        \centering
        \includegraphics[width=\linewidth]{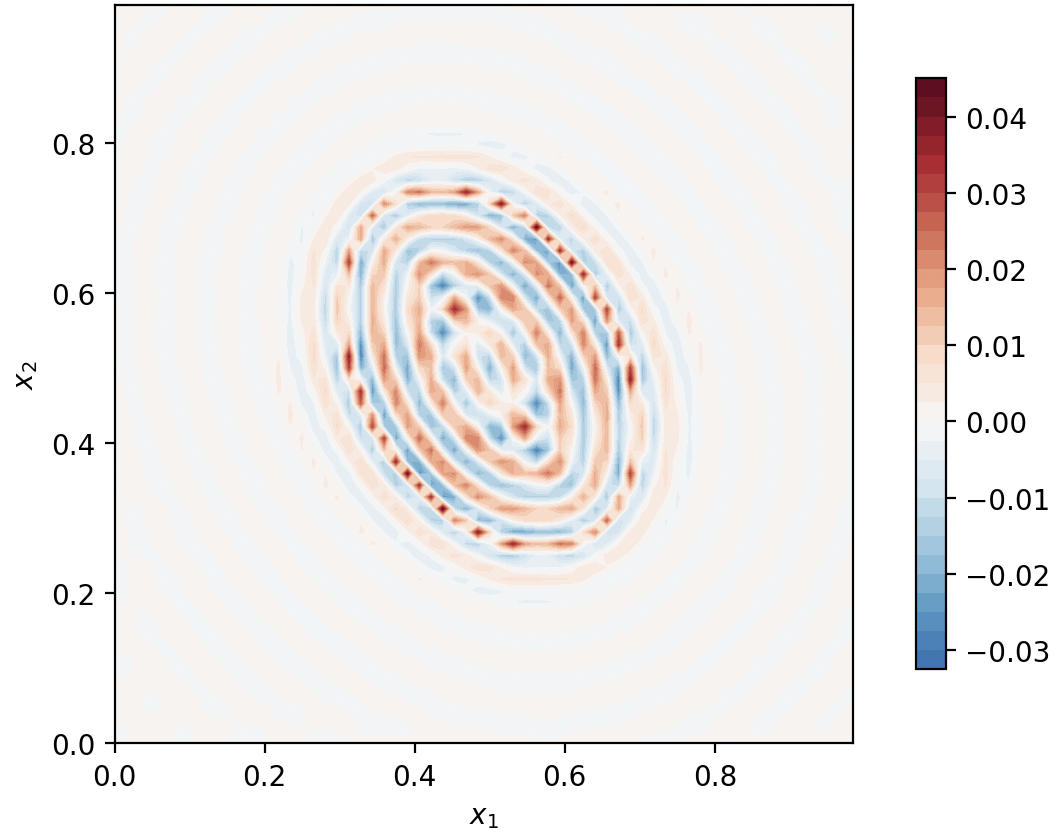}
        \caption{$p_1 - \mu_1$}
        \label{fig2:d}
    \end{subfigure}

    \vspace{0.5cm}

    % Third row
    \begin{subfigure}[t]{0.48\textwidth}
        \centering
        \includegraphics[width=\linewidth]{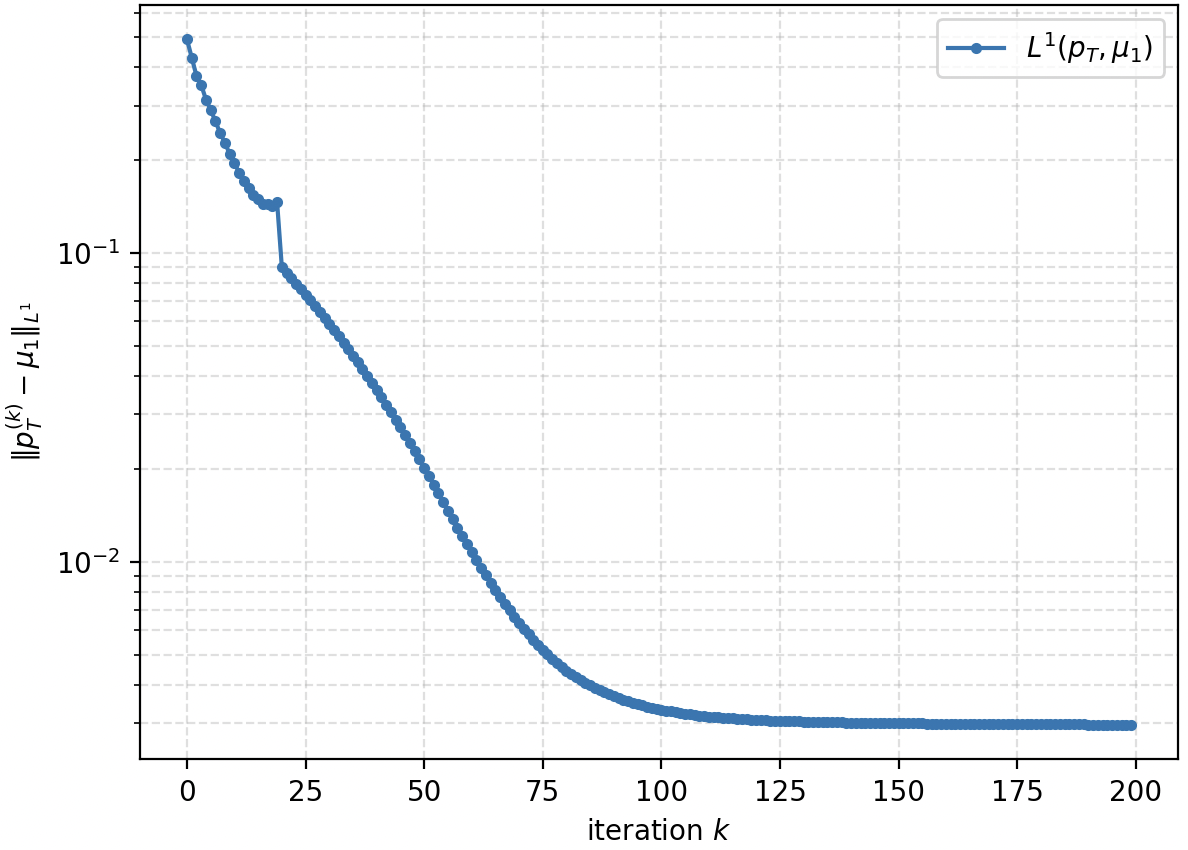}
        \caption{Sinkhorn convergence of the $L^1$ error}
        \label{fig2:e}
    \end{subfigure}
    \hfill
    \begin{subfigure}[t]{0.48\textwidth}
        \centering
        \includegraphics[width=\linewidth]{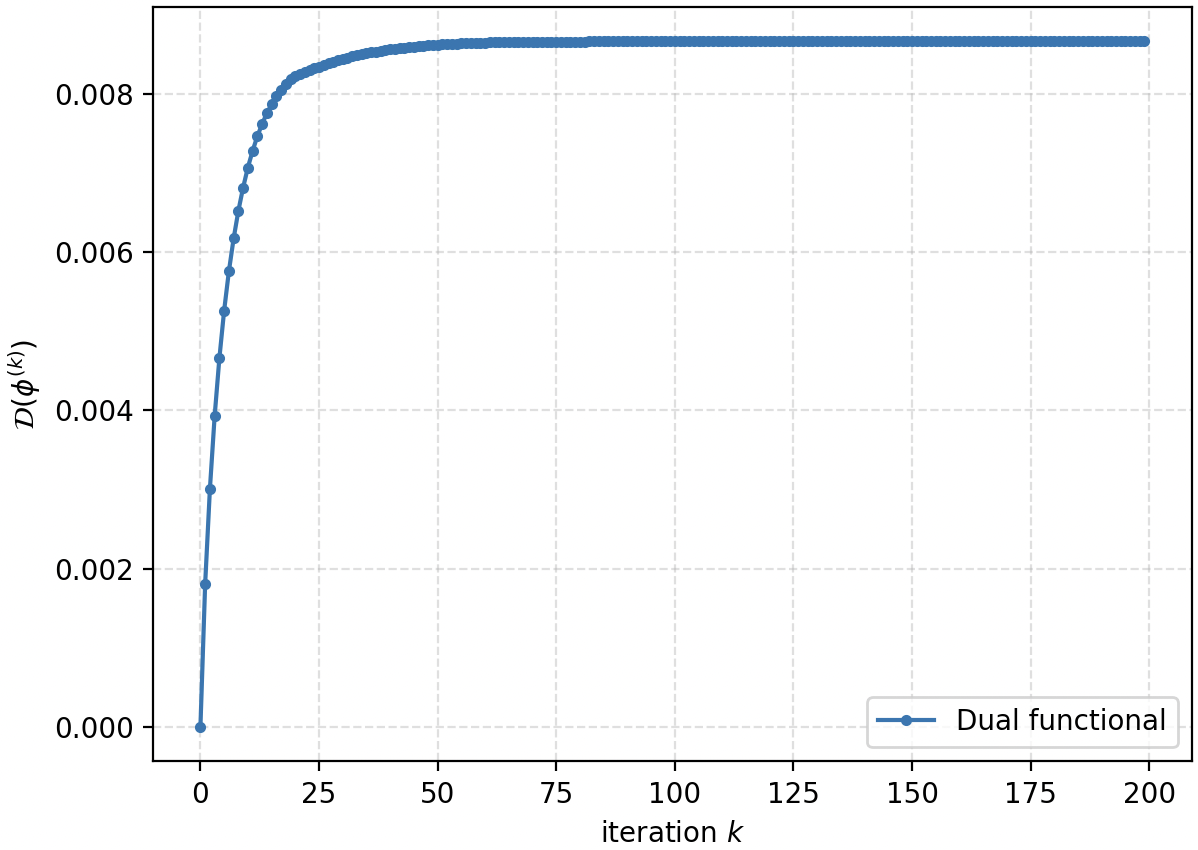}
        \caption{Monotone dual functionnal $\mathcal{D}(\phi^{(k)})$}
        \label{fig2:f}
    \end{subfigure}

    \caption{Gaussian to Gaussian in 2D with Brownian reference volatility}
    \label{fig2:results}
\end{figure}

\subsection{Monotonicity of the dual functional}

Let $H$ be defined as in Remark \ref{rem:continuity_hamiltonian}
and, for a terminal condition $\psi:\R^d\to\R$, let $(\phi^\psi,p^\psi)$ be a smooth solution of
\begin{equation}\label{eq:HJB-FP}
\left\{
\begin{aligned}
&\partial_t \phi^\psi + H(\nabla_x^2 \phi^\psi)=0,
&& (t,x)\in [0,1)\times \R^d,\\
&\partial_t p^\psi = \frac12 \nabla_x^2 : \bigl(\Sigma^\psi p^\psi\bigr),
&& (t,x)\in (0,1]\times \R^d,\\
&\phi^\psi(1,\cdot)=\psi,\qquad p^\psi(0,\cdot)=\mu_0,
\end{aligned}
\right.
\end{equation}
where
$
\Sigma^\psi(t,x):=\Sigma^*(\nabla_x^2 \phi^\psi(t,x))
$
is the minimizer in the definition of $H$.
Let $\mu_1$ be the target terminal law, and define the dual functional
\[
\mathcal D(\psi):=\int_{\R^d} \phi^\psi(0,x)\,\mu_0(dx)-\int_{\R^d}\psi(x)\,\mu_1(dx).
\]
Assume moreover that $p_1^\psi$ and $\mu_1$ admit strictly positive smooth densities, still denoted by $p_1^\psi$ and $\mu_1$.
Consider the flow
\begin{equation}\label{eq:Sinkhorn-flow}
\partial_s \psi_s = \log\!\left(\frac{p_1^{\psi_s}}{\mu_1}\right).
\end{equation}
Then, formally,
\begin{equation}\label{eq:frac_D}
\frac{d}{ds}\mathcal D(\psi_s)
=
\int_{\R^d}
\bigl(p_1^{\psi_s}-\mu_1\bigr)
\log\!\left(\frac{p_1^{\psi_s}}{\mu_1}\right)\,dx
= 
\KL(p_1^{\psi_s}\mid \mu_1)
+
\KL(\mu_1\mid p_1^{\psi_s})
\ge 0.
\end{equation}

Hence $\mathcal D(\psi_s)$ is monotonically increasing along the flow \eqref{eq:Sinkhorn-flow}; see Appendix \ref{sec:monotonicity} for the formal computation. In this sense, \eqref{eq:Sinkhorn-flow} can be viewed as a mirror-ascent-type evolution for the dual problem, with $\mathcal D$ playing the role of a Lyapunov functional. Indeed, the right-hand side of (\ref{eq:frac_D}) vanishes if and only if $p_1^{\psi_s}=\mu_1$. Thus the only stationary points of the flow are precisely the terminal potentials for which the terminal marginal constraint is satisfied.

This Lyapunov structure provides a formal explanation for the convergence of the Sinkhorn-type iteration: as the flow evolves, the dual functional increases while the terminal mismatch is driven to zero. This monotone behavior is clearly visible in plot (d) of Figures~\ref{fig1:results} as well as in plot (f) of Figures~\ref{fig2:results}.

\section{Proofs of Main Results}

\subsection{Proof of Theorem \ref{thm:weakconvergence d dim}}

\begin{lemma}\label{lem:tightness d dim}
Let Assumption \ref{ass:weak_conv} (ii) hold.
Then \((\mathbb P^n)_{n\ge1}\) is tight in $\mathcal M_1\bigl(D([0,1];\mathbb R^d)\bigr)$.
\end{lemma}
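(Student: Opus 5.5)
We prove tightness with Aldous' criterion on $D([0,1];\mathbb R^d)$. Two conditions must be checked.
\begin{enumerate}
\item[(a)] The family $(\sup_{t\le1}|X_t|)$ is tight under $(\mathbb P^n)$.
\item[(b)] For every $\epsilon>0$,
\[
\lim_{\delta\downarrow0}\limsup_{n\to\infty}\sup_{\tau,\,0\le\theta\le\delta}\mathbb P^n\bigl(|X_{(\tau+\theta)\wedge1}-X_\tau|>\epsilon\bigr)=0,
\]
where $\tau$ ranges over stopping times bounded by $1$.
\end{enumerate}
Both should follow from second-moment bounds on increments, which the uniform bound $\Sigma=\lambda\bar\Sigma\preceq MI_d$ of Assumption~\ref{ass:weak_conv}(ii) provides directly.

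\emph{Step 1: martingale property and predictable quadratic variation.} Under $\mathbb P^n$ the jump measure of $X$ has compensator
\[
\nu^n(dt,dz)=n\lambda(t,X_{t-})\,\eta\circ\bigl(e\mapsto n^{-1/2}\bar\Sigma(t,X_{t-})^{1/2}e\bigr)^{-1}(dz)\,dt.
\]
Here $\eta$ is centered with identity covariance, as in Assumption~\ref{ass:weak_conv}(i), which is implicitly used since the generator must be a martingale generator. The mean jump is therefore $0$, and the compensated second moment is
\[
n\lambda\cdot\tfrac1n\bar\Sigma=\Sigma(t,X_{t-}).
\]
To justify this rigorously for unbounded test functions, apply the martingale problem to $\varphi_R(x)=\chi_R(x)\,x_i$ and $\chi_R(x)\,x_ix_j$ with cutoffs $\chi_R$, localize with $\tau_R=\inf\{t:|X_t|\ge R\}$, and let $R\to\infty$. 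This shows that $X$ is a locally square-integrable martingale with
\[
\langle X\rangle_t=\int_0^t\Sigma(s,X_{s-})\,ds\preceq Mt\,I_d.
\]
Since this bound is deterministic, $X$ is a true square-integrable martingale and the localization is removed.

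\emph{Step 2: compact containment.} By Doob's inequality,
\[
\mathbb E^{\mathbb P^n}\Bigl[\sup_{t\le1}|X_t-x_0|^2\Bigr]\le 4\,\mathbb E\bigl[\tr\langle X\rangle_1\bigr]\le 4dM,
\]
uniformly in $n$. Markov's inequality then gives (a).

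\emph{Step 3: Aldous condition.} For a stopping time $\tau\le1$ and $\theta\le\delta$, optional sampling applied to the martingale $|X_t|^2-\tr\langle X\rangle_t$ yields
\[
\mathbb E^{\mathbb P^n}\bigl|X_{(\tau+\theta)\wedge1}-X_\tau\bigr|^2=\mathbb E\Bigl[\int_\tau^{(\tau+\theta)\wedge1}\tr\Sigma(s,X_{s-})\,ds\Bigr]\le dM\delta.
\]
Chebyshev's inequality gives the bound $dM\delta/\epsilon^2$, which tends to $0$ as $\delta\to0$ uniformly in $n$ and $\tau$. Aldous' criterion, together with (a), yields tightness.

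The main obstacle is Step 1: the martingale problem is posed only for $C_b^2$ test functions, so identifying the predictable quadratic variation of $X$ needs the localization and truncation argument. Only the uniform upper bound on $\Sigma$ is used there, not a moment bound on $\eta$ beyond order two. If the truncation becomes delicate, I would instead verify the Rebolledo/Aldous criterion directly through the $C_b^2$ formulation. For fixed $\varphi\in C_b^2$, a Taylor expansion gives $|\mathcal L^n_t\varphi|\le \tfrac12\|D^2\varphi\|_\infty\tr\Sigma\le C_\varphi$. The martingale $M^\varphi$ then has predictable quadratic variation controlled by $n\lambda\int(\varphi(x+z)-\varphi(x))^2\,\eta^n(dz)\le\|\nabla\varphi\|_\infty^2\tr\Sigma$. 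Hence $(\varphi(X))$ is tight for each $\varphi$ in a separating class, and Jakubowski's criterion combined with compact containment from Step 2 concludes.
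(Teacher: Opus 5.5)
Your proposal is correct and follows essentially the same route as the paper. Both use Aldous' criterion, obtain the martingale property of $X$ and of $|X|^2-\int_0^\cdot \tr\Sigma(s,X_s)\,ds$ from truncated test functions plus localization, and derive the increment bound $\mathbb E^{\mathbb P^n}|X_{\tau+\theta}-X_\tau|^2\le dM\delta$ uniformly in $n$ and $\tau$. The only difference is bookkeeping: you remove the localization first (the bound $\tr\langle X\rangle_1\le dM$ makes $X$ a true $L^2$ martingale, and Doob gives compact containment), whereas the paper keeps $\tau_r$ in the increment estimate and discards it at the end via $\mathbb P^n(\tau_r\le 1)\le 4(|x_0|^2+M)/r^2$.
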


\begin{proof}
Let \(X_t(\omega)=\omega_t\) be the canonical process. We verify Aldous' criterion.
For \(r\ge1\), set
$
\tau_r:=\inf\{t\in[0,1]: |X_t|\ge r\}\wedge 1.
$
We first identify the stopped process \(X^{\tau_r}\) as a square-integrable martingale with
predictable quadratic variation controlled by \(\Sigma_1\).

Choose \(\theta\in C_b^2(\mathbb R)\) such that \(\theta(y)=y\) on \([-1,1]\), and define
\[
\theta_R(y):=R\theta(y/R), \qquad \varphi_{i,R}(x):=\theta_R(x_i).
\]
Then \(\varphi_{i,R}\in C_b^2(\mathbb R^d)\), \(\varphi_{i,R}(x)=x_i\) whenever \(|x_i|\le R\), and
\(\|\theta_R''\|_\infty\lesssim R^{-1}\). Since \(\mathbb P^n\) solves the martingale problem for
\(\mathcal L^n\), the process
\[
\varphi_{i,R}(X_t)-\int_0^t (\mathcal L_s^n\varphi_{i,R})(X_s)\,ds
\]
is a \(\mathbb P^n\)-martingale. Using the centering of the jumps and a second-order Taylor expansion,
\[
\sup_{(t,x)}|(\mathcal L_t^n\varphi_{i,R})(x)|\lesssim \frac{1}{R}\sup_{(t,x)}\Sigma_{ii}(t,x)
\le \frac{CM}{R}.
\]
Hence, for fixed \(r\) and \(R>r\), optional stopping at \(t\wedge\tau_r\) yields
$
X^i_{t\wedge\tau_r}
=
\varphi_{i,R}(X_{t\wedge\tau_r}),
$
and letting \(R\to\infty\) shows that \(X^i_{t\wedge\tau_r}\) is a martingale. Thus each coordinate
process \(X^i\) is a local martingale.

Next choose \(\kappa\in C_b^2(\mathbb R)\) such that \(\kappa(y)=y^2\) on \([-1,1]\), and define
\[
\kappa_R(y):=R^2\kappa(y/R), \qquad \psi_{i,R}(x):=\kappa_R(x_i).
\]
Then \(\psi_{i,R}(x)=x_i^2\) on \(\{|x_i|\le R\}\), while \(\sup_R\|\kappa_R''\|_\infty<\infty\) and
\(\kappa_R''(y)\to 2\) for each fixed \(y\). Again,
\[
\psi_{i,R}(X_t)-\int_0^t (\mathcal L_s^n\psi_{i,R})(X_s)\,ds
\]
is a martingale. By Taylor's formula,
$
|(\mathcal L_t^n\psi_{i,R})(x)|\lesssim \Sigma_{ii}(t,x)\le M,
$
and for fixed \((t,x)\),
\[
(\mathcal L_t^n\psi_{i,R})(x)\longrightarrow \Sigma_{ii}(t,x)
\qquad (R\to\infty).
\]
Therefore, for \(R>r\), optional stopping and dominated convergence give that
$$
(X^i_{t\wedge\tau_r})^2-\int_0^{t\wedge\tau_r}\Sigma_{ii}(s,X_s)\,ds
$$
is a martingale. Summing over \(i\) yields
\[
|X_{t\wedge\tau_r}|^2-\int_0^{t\wedge\tau_r}\tr(\Sigma(s,X_s))\,ds
\]
is a martingale.

Now let \(\tau\) be any stopping time with \(\tau\le 1-\delta\). Since \(X^{\tau_r}\) is a martingale and $\Sigma$ is bounded under Assumption \ref{ass:weak_conv} (ii),
\[
\mathbb E^{\mathbb P^n}\!\left[
|X_{(\tau+\delta)\wedge\tau_r}-X_{\tau\wedge\tau_r}|^2
\right]
=
\mathbb E^{\mathbb P^n}\!\left[
\int_{\tau\wedge\tau_r}^{(\tau+\delta)\wedge\tau_r}
\tr(\Sigma(s,X_s))\,ds
\right]
\le M\,\delta.
\]
Hence, by Markov's inequality,
$
\mathbb P^n\!\left(
|X_{(\tau+\delta)\wedge\tau_r}-X_{\tau\wedge\tau_r}|>\eta
\right)
\le \frac{M\,\delta}{\eta^2}.
$
To remove the localization, note that
\[
\{|X_{\tau+\delta}-X_\tau|>\eta\}
\subset
\{|X_{(\tau+\delta)\wedge\tau_r}-X_{\tau\wedge\tau_r}|>\eta\}
\cup
\{\tau_r\le 1\},
\]
so
$
\mathbb P^n\!\left(|X_{\tau+\delta}-X_\tau|>\eta\right)
\le \frac{M\,\delta}{\eta^2}+\mathbb P^n(\tau_r\le1).
$
It remains to control $\mathbb P^n(\tau_r\le1)$ uniformly in $n$. Since $X_0=x_0$,
\[
\mathbb E^{\mathbb P^n}\!\left[|X_{1\wedge\tau_r}|^2\right]
=
|x_0|^2+
\mathbb E^{\mathbb P^n}\!\left[\int_0^{1\wedge\tau_r}\tr(\Sigma(s,X_s))\,ds\right]
\le |x_0|^2+M.
\]
Applying Doob's inequality to the square-integrable martingale $X^{\tau_r}$, we get
\[
\mathbb P^n(\tau_r\le1)
\le
\mathbb P^n\!\left(\sup_{t\le1}|X_{t\wedge\tau_r}|\ge r\right)
\le
\frac{4}{r^2}\,
\mathbb E^{\mathbb P^n}\!\left[|X_{1\wedge\tau_r}|^2\right]
\le
\frac{4(|x_0|^2+M)}{r^2}.
\]
Therefore
\[
\sup_{n\ge1}\sup_{\tau\le1-\delta}
\mathbb P^n\!\left(|X_{\tau+\delta}-X_\tau|>\eta\right)
\le
\frac{M\,\delta}{\eta^2}+\frac{4(|x_0|^2+M)}{r^2}.
\]
Letting first \(\delta\downarrow0\), then \(r\to\infty\), proves Aldous' condition. Since \(X_0\equiv x_0\),
the initial laws are tight. Therefore \((\mathbb P^n)_{n\ge1}\) is tight in $\mathcal M_1\bigl(D([0,1];\mathbb R^d)\bigr)$.
\end{proof}

\begin{lemma}\label{lem:continuous limit}
Let Assumption \ref{ass:weak_conv} (iii) hold. For every \(K>0\), 
$
\P^n\Big(\sup_{t\le 1}|\Delta X_t|>K\Big)\xrightarrow[n\to\infty]{} 0.
$
\end{lemma}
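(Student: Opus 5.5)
We bound the probability of a large jump by the expected number of large jumps, and then use the compensator of the jump measure \(\mu(dt,dz)\) under \(\mathbb P^n\). Under \(\mathbb P^n\), jumps occur at rate \(n\lambda(t,X_{t-})\), and each jump has the form \(\frac{1}{\sqrt n}\bar\Sigma(t,X_{t-})^{1/2}e\) with \(e\sim\eta\). So the compensator is
\[
\nu^n(dt,dz)=n\lambda(t,X_{t-})\,\big(z\mapsto n^{-1/2}\bar\Sigma(t,X_{t-})^{1/2}e\big)_{\#}\eta(dz)\,dt .
\]
I would first justify this from the martingale problem: apply it to test functions of the form \(f(x)\) and use standard jump-characteristics arguments, or simply take this compensator as part of the construction of \(\mathbb P^n\).

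The key steps are as follows. Since \(\{\sup_{t\le1}|\Delta X_t|>K\}=\{\mu([0,1]\times\{|z|>K\})\ge1\}\), we have
\[
\mathbb P^n\Big(\sup_{t\le 1}|\Delta X_t|>K\Big)\le \mathbb E^{\mathbb P^n}\big[\mu([0,1]\times\{|z|>K\})\big]=\mathbb E^{\mathbb P^n}\big[\nu^n([0,1]\times\{|z|>K\})\big].
\]
The equality of expectations holds because the integrand \(\mathbf 1_{\{|z|>K\}}\) is nonnegative and predictable, so no integrability issue arises (monotone convergence). Then
\[
\nu^n([0,1]\times\{|z|>K\})=\int_0^1 n\lambda(t,X_t)\,\eta\big(\{e:\ |\bar\Sigma(t,X_t)^{1/2}e|>K\sqrt n\}\big)\,dt ,
\]
where \(X_{t-}\) may be replaced by \(X_t\) for a.e. \(t\). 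Since \(|\bar\Sigma^{1/2}e|\le \tr(\bar\Sigma)^{1/2}|e|\), Markov's inequality with exponent \(2+\varepsilon\) gives
\[
\eta\big(|\bar\Sigma^{1/2}e|>K\sqrt n\big)\le \frac{\tr(\bar\Sigma)^{1+\varepsilon/2}\int|e|^{2+\varepsilon}\eta(de)}{K^{2+\varepsilon}n^{1+\varepsilon/2}} .
\]
Multiplying by \(n\lambda\) and integrating yields
\[
\mathbb P^n\Big(\sup_{t\le 1}|\Delta X_t|>K\Big)\le \frac{C_\eta}{K^{2+\varepsilon}n^{\varepsilon/2}}\ \mathbb E^{\mathbb P^n}\Big[\int_0^1\tr(\bar\Sigma(t,X_t))^{1+\varepsilon/2}\lambda(t,X_t)\,dt\Big].
\]
The expectation is bounded uniformly in \(n\) by Assumption~\ref{ass:weak_conv} (iii), and the moment condition in (i) makes \(C_\eta\) finite. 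Hence the bound is \(O(n^{-\varepsilon/2})\to0\).

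The main obstacle is the first step: rigorously identifying the compensator of the jump measure from the martingale-problem formulation alone, with possibly unbounded intensity \(n\lambda\). I would handle it by localization. The jump counting process is dominated by a Poisson construction, or alternatively one applies the martingale problem to functions approximating \(\mathbf 1_{\{|x-y|>K\}}\)-type counts. In either case the bound is obtained for the stopped counts and then passed to the limit by monotone convergence, since only an upper bound on the expectation of a nonnegative quantity is required.
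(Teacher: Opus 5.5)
Your proposal is correct and follows the paper's proof. Both bound the probability by the expected number of jumps exceeding $K$, compute that expectation through the compensator $n\lambda(t,X_{t-})\,dt$ with marks $n^{-1/2}\bar\Sigma(t,X_{t-})^{1/2}e$, $e\sim\eta$, apply Markov's inequality with exponent $2+\varepsilon$ together with $|\bar\Sigma^{1/2}e|\le \tr(\bar\Sigma)^{1/2}|e|$, and conclude from the resulting $O(n^{-\varepsilon/2})$ bound using Assumption~\ref{ass:weak_conv} (iii) and the moment condition in (i). The paper simply invokes the compensator formula for marked point processes, so your localization remark about identifying the compensator adds rigor without changing the argument.
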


\begin{proof}
Fix $K>0$ and let $N^{K}:=\sum_{0<t\le 1}\mathbf 1_{\{|\Delta X_t|>K\}}$ be the number of jumps of $X_t$ on $[0,1]$ whose size exceeds $K$.
Then
\[
{\mathbb P^n}\Big(\sup_{t\le 1}|\Delta X_t|>K\Big)
=
{\mathbb P^n}(N^{K}\ge 1)
\le
\mathbb E^{\mathbb P^n}[N^{K}].
\]

\noindent
By the compensator formula for marked point processes,
\[
\begin{aligned}
\mathbb{E}^{\mathbb{P}^n}[N^{K}]
&=
\mathbb{E}^{\mathbb{P}^n}\!\left[
\int_0^1
\int_{\mathbb{R}^d}
\int_0^\infty
\mathbf{1}_{\left\{
\left|\frac{1}{\sqrt{n}}\bar\Sigma(t, X_{t-})^{1/2}e\right|>K
\right\}}
\mathbf{1}_{\{0\le u\le n \lambda(t, X_{t-})\}}
\,du\,\eta(de)\,dt
\right] \\
&=
\mathbb{E}^{\mathbb{P}^n}\!\left[
\int_0^1
n \lambda(t, X_{t-})
\int_{\mathbb{R}^d}
\mathbf{1}_{\left\{
\left|\frac{1}{\sqrt{n}}\bar\Sigma(t, X_{t-})^{1/2}e\right|>K
\right\}}
\,\eta(de)\,dt
\right].
\end{aligned}
\]
\noindent
Using Markov's inequality, we obtain
\[
\int_{\mathbb{R}^d}
\mathbf 1_{\left\{
\left|\frac1{\sqrt n}\bar\Sigma(t, X_{t-})^{1/2}e\right|>K
\right\}}
\,\eta(de)
\le
\frac{1}{K^{2+\varepsilon}}
\int_{\mathbb{R}^d}
\left|
\frac1{\sqrt n}\bar\Sigma(t, X_{t-})^{1/2}e
\right|^{2+\varepsilon}
\eta(de).
\]
Hence
\[
\mathbb E^{\mathbb P^n}[N^{K}]
\le
\frac{1}{K^{2+\varepsilon}n^{\varepsilon/2}}
\mathbb E^{\mathbb P^n}\!\left[
\int_0^1
\lambda(t, X_{t-})
\int_{\mathbb{R}^d}
|\bar\Sigma(t, X_{t-})^{1/2}e|^{2+\varepsilon}\,\eta(de)\,dt
\right].
\]
\noindent
Since $\|\bar\Sigma(t, X_{t-})\|_{\mathrm{op}}\le \tr(\bar\Sigma(t, X_{t-}))$, it follows that
\[
\int_{\mathbb{R}^d}
|\bar\Sigma(t, X_{t-})^{1/2}e|^{2+\varepsilon}\,\eta(de)
\le
\tr(\bar\Sigma(t, X_{t-}))^{1+\varepsilon/2}\int_{\mathbb{R}^d}|e|^{2+\varepsilon}\,\eta(de).
\]
Therefore
\[
\mathbb E^{\mathbb P^n}[N^{K}]
\le
\frac{1}{K^{2+\varepsilon}n^{\varepsilon/2}}
\left(\int_{\mathbb{R}^d}|e|^{2+\varepsilon}\,\eta(de)\right)
\sup_{\mathbb P^n \in \mathcal M}\mathbb E^{\mathbb P^n}\!\left[
\int_0^1
\tr(\bar\Sigma(t, X_{t-}))^{1+\varepsilon/2}\,\lambda(t, X_{t-})\,dt\right].
\]
Under \ref{ass:weak_conv} (i) and \ref{ass:weak_conv} (iii), this tends to $0$ as $n\to\infty$, which yields the result.
\end{proof}

\begin{lemma}[Uniform bound on generators]\label{lem:Lu_rate_non_gaussian}
Let \(\varepsilon\in(0,1)\) and \(f\in C_b^{2,\varepsilon}(\mathbb R^d)\).
Then for all \((u,x)\in[0,1]\times\mathbb R^d\),
\[
\left|
(\mathcal L_u^n f)(x)-(\mathcal L_u f)(x)
\right|
\le
C\, \lambda(u, x)\,\tr(\bar\Sigma(u, x))^{1+\varepsilon/2}\,n^{-\varepsilon/2},
\qquad
C=C(\varepsilon,f,\eta).
\]
where $\mathcal L_u$ is defined in \eqref{eq:mtgp_d} and $\mathcal L_u^n$ in \eqref{eq:L_n}.
\end{lemma}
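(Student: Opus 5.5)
We prove the bound by a second-order Taylor expansion of $f$ along each jump, with the remainder controlled by the Hölder continuity of $D^2f$. Fix $(u,x)$ and write $A:=\bar\Sigma(u,x)^{1/2}$ and $h:=n^{-1/2}Ae$. Taylor's formula with integral remainder gives
\[
f(x+h)-f(x)=\nabla f(x)\cdot h+\tfrac12 h^\top D^2f(x)h+R(x,h),
\]
where
\[
R(x,h)=\int_0^1(1-s)\,h^\top\bigl(D^2f(x+sh)-D^2f(x)\bigr)h\,ds .
\]
Since $f\in C_b^{2,\varepsilon}$, we have $|D^2f(y)-D^2f(x)|\le [D^2f]_\varepsilon|y-x|^\varepsilon$, and hence
\[
|R(x,h)|\le \tfrac12[D^2f]_\varepsilon|h|^{2+\varepsilon}.
\]

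Next we integrate against $\eta$ and multiply by $n\lambda(u,x)$. By Assumption~\ref{ass:weak_conv}(i), $\int e\,\eta(de)=0$, so the first-order term vanishes. The same assumption gives $\int ee^\top\eta(de)=I_d$, so the second-order term equals
\[
n\lambda\cdot\tfrac{1}{2n}\tr\bigl(A^\top D^2f(x)A\bigr)=\tfrac12\tr\bigl(\lambda\bar\Sigma D^2f(x)\bigr)=(\mathcal L_uf)(x),
\]
using $\Sigma=\lambda\bar\Sigma$. Therefore
\[
|(\mathcal L^n_uf)(x)-(\mathcal L_uf)(x)|\le n\lambda(u,x)\int|R(x,n^{-1/2}Ae)|\,\eta(de).
\]

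It remains to bound the remainder integral. We have $|Ae|\le\|A\|_{\mathrm{op}}|e|$ and $\|A\|_{\mathrm{op}}^2=\|\bar\Sigma\|_{\mathrm{op}}\le\tr(\bar\Sigma)$, so
\[
|h|^{2+\varepsilon}\le n^{-1-\varepsilon/2}\tr(\bar\Sigma(u,x))^{1+\varepsilon/2}|e|^{2+\varepsilon}.
\]
Multiplying by $n\lambda$ and integrating yields the claimed bound, with constant
\[
C=\tfrac12[D^2f]_\varepsilon\int|e|^{2+\varepsilon}\eta(de),
\]
which is finite by Assumption~\ref{ass:weak_conv}(i).

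The argument has no genuine obstacle. The points needing care are:
\begin{itemize}
\item using the integral form of the remainder, so that only the Hölder seminorm of $D^2f$ enters and no third derivatives are needed;
\item checking that the linear term is integrable, which follows from the finite second moment of $\eta$, so that the centering argument is legitimate.
\end{itemize}

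If $C_b^{2,\varepsilon}$ is understood with the Hölder condition only locally, the bound still holds. In that case one splits into $|h|\le1$, where the local Hölder estimate applies, and $|h|>1$, where $|R|\le 2\|D^2f\|_\infty|h|^2\le 2\|D^2f\|_\infty|h|^{2+\varepsilon}$. This gives the same bound with a modified constant.
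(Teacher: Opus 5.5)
Your proposal is correct and follows essentially the same route as the paper: a second-order Taylor expansion with a Hölder-controlled remainder, and cancellation of the first- and second-order terms via $\int e\,\eta(de)=0$ and $\int ee^\top\eta(de)=I_d$. It then bounds the remainder with $|\bar\Sigma^{1/2}e|^{2+\varepsilon}\le \tr(\bar\Sigma)^{1+\varepsilon/2}|e|^{2+\varepsilon}$; your integral form of the remainder simply makes the constant $C=\tfrac12[D^2f]_\varepsilon\int|e|^{2+\varepsilon}\eta(de)$ explicit.
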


\begin{proof}
Let $f\in C_b^{2,\varepsilon}(\mathbb R^d)$ and \(\varepsilon\in(0,1)\). Using a the second-order Taylor expansion of $f$,
\[
f(x+z)
=
f(x)+\nabla f(x)\cdot z+\frac12 z^\top D^2 f(x)z+R(x,z),
\]
with
$
|R(x,z)|\le C_\varepsilon [D^2 f]_{C^\varepsilon}|z|^{2+\varepsilon},
$
and since $\eta$
has zero mean and unit covariance matrix by Assumption \ref{ass:weak_conv} (i),
\[
(\mathcal L_u^n f)(x)
=
\frac12\mathrm{\tr}\!\big(D^2 f(x)\Sigma_u(x)\big)
+
n \lambda(u,  x)
\int_{\mathbb{R}^d}
R\!\left(x,\frac1{\sqrt n}\bar\Sigma(u, x)^{1/2}e\right)
\eta(de).
\]
\noindent
Hence
\[
\left|
(\mathcal L_u^n f)(x)-(\mathcal L_u f)(x)
\right|
\le
C_\varepsilon [D^2 f]_{C^\varepsilon}\,
n \lambda(u,  x)
\int_{\mathbb{R}^d}
\left|
\frac1{\sqrt n}\bar\Sigma(u, x)^{1/2}e
\right|^{2+\varepsilon}
\eta(de).
\]
Since
$
|\bar\Sigma(u, x)^{1/2}e|^{2+\varepsilon}
\le
\tr(\bar\Sigma(u, x))^{1+\varepsilon/2}|e|^{2+\varepsilon},
$
it follows that
\[
\left|
(\mathcal L_u^n f)(x)-(\mathcal L_u f)(x)
\right|
\le
C_\varepsilon [D^2 f]_{C^\varepsilon}\,
n^{-\varepsilon/2}
\,\lambda(u,  x)\,\tr(\bar\Sigma(u, x))^{1+\varepsilon/2}
\int_{\mathbb{R}^d} |e|^{2+\varepsilon}\,\eta(de).
\]
\noindent
By Assumption \ref{ass:weak_conv} (i), there exists a constant $C^{\prime}$ such that
$
\int_{\mathbb{R}^d} |e|^{2+\varepsilon}\,\eta(de) \le C^{\prime},
$
which yields the claim.
\end{proof}

\begin{lemma}
\label{lem:mtgproperty d dim}
Assume that \ref{ass:weak_conv} (ii) and \ref{ass:weak_conv} (iii) hold.
Suppose \(\mathbb P\in\mathcal M_1(C([0,1],\mathbb R^d))\) is a weak limit of
\(\Law(X)\). Then \(\mathbb P\) solves the martingale problem for
\eqref{eq:mtgp_d}.
\end{lemma}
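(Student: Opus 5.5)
We identify the limit through the standard test-function characterization of the martingale problem. Fix a subsequence, still denoted $\mathbb P^n$, with $\mathbb P^n\Rightarrow\mathbb P$; by Lemma~\ref{lem:continuous limit}, $\mathbb P$ is supported on $C([0,1];\mathbb R^d)$. Since $X_0=x_0$ under every $\mathbb P^n$ and $\omega\mapsto\omega_0$ is continuous, condition (i) of Definition~\ref{def:target measure} holds. For (ii), it suffices to show that for all $0\le s<t\le1$, every $f\in C_b^\infty(\mathbb R^d)$ (a convergence-determining, dense subclass of $C_b^2$, hence in $C_b^{2,\varepsilon}$), and every bounded continuous $G$ depending on finitely many coordinates $X_{s_1},\dots,X_{s_k}$ with $s_j\le s$,
\[
\mathbb E^{\mathbb P}\Big[G\Big(f(X_t)-f(X_s)-\int_s^t\mathcal L_uf(X_u)\,du\Big)\Big]=0 .
\]
Passing from smooth $f$ to $C_b^2$ is done by approximation (mollify $f$; $\mathcal L_u$ has bounded coefficients by Assumption~\ref{ass:weak_conv}(ii), so $\mathcal L_u f_k\to\mathcal L_u f$ boundedly and pointwise). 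Passing from finitely many coordinates to $\mathcal F_s$ is a monotone class argument. Restricting the $s_j$ to the co-countable set of times with $\mathbb P(\Delta X_{s_j}\neq0)=0$ is harmless, since $\mathbb P$ charges only continuous paths and right-continuity extends the identity to all times.

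For fixed $n$, the $\mathbb P^n$ martingale property gives
\[
\mathbb E^{\mathbb P^n}\Big[G\Big(f(X_t)-f(X_s)-\int_s^t\mathcal L^n_uf(X_u)\,du\Big)\Big]=0 .
\]
We split $\mathcal L^n_u f=\mathcal L_uf+(\mathcal L^n_uf-\mathcal L_uf)$. By Lemma~\ref{lem:Lu_rate_non_gaussian} the error term satisfies
\[
\Big|\mathbb E^{\mathbb P^n}\Big[G\int_s^t(\mathcal L^n_u-\mathcal L_u)f(X_u)\,du\Big]\Big|
\le C\|G\|_\infty n^{-\varepsilon/2}\,\mathbb E^{\mathbb P^n}\Big[\int_0^1\lambda\,\tr(\bar\Sigma)^{1+\varepsilon/2}(u,X_u)\,du\Big],
\]
which tends to $0$ by Assumption~\ref{ass:weak_conv}(iii). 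It remains to show
\[
\mathbb E^{\mathbb P^n}[\Phi]\to\mathbb E^{\mathbb P}[\Phi],\qquad
\Phi(\omega):=G(\omega)\Big(f(\omega_t)-f(\omega_s)-\int_s^t\mathcal L_uf(\omega_u)\,du\Big).
\]

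This last step is the main obstacle, because $\Phi$ is not continuous on $D$ with the Skorokhod topology: evaluation maps are discontinuous there. The route I would take is the following. Since $\Sigma$ is continuous and bounded, $(u,x)\mapsto\mathcal L_uf(x)$ is bounded and continuous, so $\omega\mapsto\int_s^t\mathcal L_uf(\omega_u)\,du$ is Skorokhod continuous; Skorokhod convergence gives $\omega^n_u\to\omega_u$ at all continuity points of $\omega$, hence a.e.\ in $u$, and dominated convergence applies. The evaluation maps $\omega\mapsto\omega_r$ are continuous at every $\omega\in C([0,1];\mathbb R^d)$, since Skorokhod convergence to a continuous limit is uniform convergence. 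Hence $\Phi$ is bounded and continuous at $\mathbb P$-almost every point, and the continuous mapping theorem yields the convergence. Alternatively, by Lemma~\ref{lem:continuous limit} and tightness, the limit is C-tight, and one may use Skorokhod representation with uniform convergence. Combining the three displays gives the required identity, so $\mathbb P$ solves the martingale problem for \eqref{eq:mtgp_d}.
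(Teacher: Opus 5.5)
Your proposal is correct and follows essentially the same route as the paper: you use the $\mathbb P^n$-martingale identity against finite-dimensional test functionals, kill the generator defect with Lemma~\ref{lem:Lu_rate_non_gaussian} and Assumption~\ref{ass:weak_conv}(iii), and pass to the limit because the functional is continuous at every continuous path, where Skorokhod convergence is uniform. Your additional steps, namely checking the initial condition and extending from smooth or $C_b^{2,\varepsilon}$ test functions to all of $C_b^2$ by mollification, fill in details that the paper leaves implicit.
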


\begin{proof}
Let $f\in C_b^{2,\varepsilon}(\mathbb R^d)$. 
\medskip
\noindent
Fix $0\le s\le t\le1$. 
\noindent
Define
\[
M_t^{n,f}
:=
f(X_t)-f(X_0)-\int_0^t (\mathcal L_u^n f)(X_u)\,du .
\]
By It\^o's formula for jump processes, $M_t^{n,f}$ is a martingale under $\mathbb P^n$.
Let $0\le t_1<\dots<t_k\le s$ and let
$h\in C_b((\mathbb R^d)^k)$,
$H:=h(X_{t_1},\dots,X_{t_k}).$
Then
\begin{equation}\label{eq:quenched-prelimit}
\mathbb E_{\mathbb P^n}
\!\left[
\Big(
f(X_t)-f(X_s)
-\int_s^t (\mathcal L_u^n f)(X_u)\,du
\Big)
H
\right]
=0 .
\end{equation}
\noindent
Subtracting the limit generator yields
\begin{align}\label{eq:defect-random}
&\mathbb E_{\mathbb P^n}
\!\left[
\Big(
f(X_t)-f(X_s)
-\int_s^t (\mathcal L_u f)(X_u)\,du
\Big)
H
\right]
\\
&\qquad=
\mathbb E_{\mathbb P^n}
\!\left[
\left(
\int_s^t
\big[(\mathcal L_u^n-\mathcal L_u)f\big](X_u)\,du
\right)
H
\right].
\nonumber
\end{align}

\noindent
By Lemma~\ref{lem:Lu_rate_non_gaussian}, there exists
$C=C(\varepsilon,f,\eta)$ such that
\[
\left|
(\mathcal L_u^n f-\mathcal L_u f)(X_u)
\right|
\le
C\,\lambda(u,  X_u)\,\tr(\bar\Sigma(u, X_u))^{1+\varepsilon/2}\,n^{-\varepsilon/2}.
\]
We obtain
\[
\left|
\mathbb E_{\mathbb P^n}
\!\left[
\left(
\int_s^t
[(\mathcal L_u^n-\mathcal L_u)f](X_u)\,du
\right)
H
\right]
\right|
\le
C\,\|H\|_\infty\,n^{-\varepsilon/2}
\,
\sup_{\mathbb P^n \in \mathcal M}\mathbb E_{\mathbb P^n}
\!\left[
\int_s^t \tr(\bar\Sigma(u, X_u))^{1+\varepsilon/2}\,\lambda(u,  X_u)\,du
\right].
\]
\noindent
By
\ref{ass:weak_conv} (iii), the right-hand side
converges to $0$ as $n\to\infty$. Hence
\begin{equation}\label{eq:quenched-limit}
\mathbb E_{\mathbb P^n}
\!\left[
\Big(
f(X_t)-f(X_s)
-\int_s^t (\mathcal L_u f)(X_u)\,du
\Big)
H
\right]
\longrightarrow 0 .
\end{equation}
\noindent
Define a bounded functional $F:D([0,1],\mathbb R^d)\to\mathbb R$ by
\[
F(\omega)
=
\Big(
f(\omega_t)-f(\omega_s)
-\int_s^t (\mathcal L_u f)(\omega_u)\,du
\Big)
h(\omega_{t_1},\dots, \omega_{t_k}).
\]
\noindent
Since $\mathcal L_u f$ is bounded and continuous, $F$ is continuous at every continuous path $\omega$. Indeed, if $\omega_n\to\omega$ in the Skorokhod topology and $\omega$ is continuous, then $\omega_n\to\omega$ uniformly on $[0,1]$, so all point evaluations and the time integral converge.
\noindent
Now $\mathbb P^n\Rightarrow \mathbb P$ and $\mathbb P(C([0,1],\mathbb R^d))=1$ yields
$
\mathbb E^{\mathbb P}[F]=\lim\limits_{n \to \infty}\mathbb E^{\mathbb P^n}[F]=0.
$
Therefore
$
M_t^f
:=
f(X_t)
- f(X_0) -
\int_0^t
\frac12
\mathrm{\tr}\big(\Sigma(u, X_u) D^2 f(X_u)\big)
\,du
$
is a martingale under $\mathbb P$, so that $\mathbb P$ solves the
martingale problem for  $\mathcal L\varphi$.
\end{proof}

\begin{proof}[Proof of Theorem \ref{thm:weakconvergence d dim}]
Since the martingale problem for \(\mathcal L\) is well posed under Assumption \ref{ass:weak_conv} (ii) (for instance by Stroock--Varadhan theory for bounded, continuous, uniformly elliptic coefficients; see \cite{stroock-varadhan-1979}), every limit point must coincide with $\mathbb P$. As tightness has already been established, this identifies the unique weak limit of the full sequence and completes the proof of Theorem \ref{thm:weakconvergence d dim}. 
\end{proof}

\subsection{Proof of Theorem \ref{thm:limit of chaos}}

\begin{proposition}
\label{prop:entropy_formula}
Under Assumption \ref{ass:chaos_limit} (i), the relative entropy satisfies
\begin{align}
H(\mathbb P_1^n\|\mathbb P_2^n)
&=
\mathbb E^{\mathbb P_1^n}\!\left[
\int_0^1
\left(
\lambda_1^n(t,X_{t-})
\log\frac{\lambda_1^n(t,X_{t-})}{\lambda_2^n(t,X_{t-})}
-\lambda_1^n(t,X_{t-})
+\lambda_2^n(t,X_{t-})
\right)\,dt
\right]
\nonumber\\
&\quad
+
\mathbb E^{\mathbb P_1^n}\!\left[
\int_0^1
\lambda_1^n(t,X_{t-})\,
H\!\left(
\eta_1^n(t,X_{t-},\cdot)\,\middle\|\,\eta_2^n(t,X_{t-},\cdot)
\right)\,dt
\right].
\label{eq:general_entropy_marked}
\end{align}
\end{proposition}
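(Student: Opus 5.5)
We will derive \eqref{eq:general_entropy_marked} from the Girsanov-type formula for the density of one marked point process law with respect to another, written in terms of predictable compensators (Jacod's theorem for multivariate point processes). Both $\mathbb P_1^n$ and $\mathbb P_2^n$ start at $\delta_{x_0}$ and are determined by their compensators $\nu_i^n(dt,dz)=\lambda_i^n(t,X_{t-})\eta_i^n(t,X_{t-},dz)\,dt$. Write $Y(t,z):=\frac{d\nu_1^n}{d\nu_2^n}(t,z)=\frac{\lambda_1^n}{\lambda_2^n}(t,X_{t-})\,\rho(t,X_{t-},z)$, where $\rho:=d\eta_1^n/d\eta_2^n$ is an explicit ratio of Gaussian densities; this ratio exists by Assumption~\ref{ass:chaos_limit}~(i). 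Jacod's formula then gives, on $\mathcal F_1$,
\[
\log\frac{d\mathbb P_1^n}{d\mathbb P_2^n}
=\int_0^1\!\!\int_{\mathbb R^d}\log Y(t,z)\,\mu(dt,dz)-\int_0^1\big(\lambda_1^n-\lambda_2^n\big)(t,X_{t-})\,dt .
\]
Since the total masses are finite and nothing is singular, $\mathbb P_1^n\ll\mathbb P_2^n$ holds with exactly this density.

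Next we take the expectation under $\mathbb P_1^n$. The key step is to replace the random measure $\mu$ by its $\mathbb P_1^n$-compensator $\nu_1^n$ in the first term. This yields
\[
\mathbb E^{\mathbb P_1^n}\!\int_0^1\!\!\int \log Y\,d\nu_1^n
=\mathbb E^{\mathbb P_1^n}\!\int_0^1 \lambda_1^n\Big(\log\tfrac{\lambda_1^n}{\lambda_2^n}+\int\log\rho\,d\eta_1^n\Big)dt .
\]
The inner integral $\int\log\rho\,d\eta_1^n$ is precisely $H(\eta_1^n\|\eta_2^n)$. Adding the term $-\lambda_1^n+\lambda_2^n$ gives the two lines of \eqref{eq:general_entropy_marked}. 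The integrand in the first line is nonnegative, since $a\log(a/b)-a+b\ge0$, and relative entropy is nonnegative, so both pieces are well defined in $[0,\infty]$.

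The main obstacle is justifying the compensator substitution, because $\log Y$ is not bounded: it has quadratic growth in $z$. We plan to localize. Let $\tau_k$ be the first time the number of jumps exceeds $k$ or $|X|$ exceeds $k$. On $[0,\tau_k]$ the coefficients are continuous, hence bounded, by Assumption~\ref{ass:chaos_limit}~(ii). Gaussian marks then give $\int|\log\rho|\,d\eta_1^n<\infty$ uniformly, so $\int_0^{\cdot\wedge\tau_k}\!\int\log Y\,(\mu-\nu_1^n)$ is a true martingale and the identity holds with $1$ replaced by $\tau_k$. We then pass $k\to\infty$ as follows. The left side is $H(\mathbb P_1^n|_{\mathcal F_{\tau_k}}\|\mathbb P_2^n|_{\mathcal F_{\tau_k}})$, which increases to $H(\mathbb P_1^n\|\mathbb P_2^n)$ by monotonicity of entropy along the filtration, $\tau_k\uparrow 1$ and lower semicontinuity. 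The right side has a nonnegative integrand and converges by monotone convergence. This establishes the formula, including the case where both sides are $+\infty$.

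Finally, the Gaussian entropy can be written out explicitly as $H(\eta_1^n\|\eta_2^n)=\tfrac12[\operatorname{tr}(\bar\Sigma_2^{-1}\bar\Sigma_1)-d-\log\det(\bar\Sigma_2^{-1}\bar\Sigma_1)]$, which is independent of $n$ because the scaling by $1/n$ cancels. This identification is what feeds Theorem~\ref{thm:limit of chaos} through $\lambda_i^n=n\lambda_i$.
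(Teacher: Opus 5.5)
Your proposal is correct and follows the same route as the paper. Both proofs apply Jacod's Girsanov formula for marked point processes, split $\log Y$ into the intensity ratio and the Gaussian mark density ratio, and replace $\mu$ by the $\mathbb P_1^n$-compensator $\nu_1^n$, which identifies $\int\log\rho\,d\eta_1^n=H(\eta_1^n\|\eta_2^n)$. Your localization via $\tau_k$, followed by the monotone passage $k\to\infty$, justifies the compensation step for the unbounded integrand $\log Y$, which the paper applies without comment; note that this step also uses the continuity in Assumption~\ref{ass:chaos_limit}~(ii), beyond the statement's hypothesis (i).
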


\begin{proof}
By the Girsanov formula for marked point processes; see Jacod \cite[Th.~5.1]{Jacod1975}, the log-likelihood ratio is
\begin{align*}
\log\frac{d \mathbb P_1^n}{d\mathbb P_2^n}
&=
\int_0^1\int_{\mathbb R^d}
\log\!\left(
\frac{\lambda_1^n(t,X_{t-})\,\eta_1^n(t,X_{t-},dz)}
{\lambda_2^n(t,X_{t-})\,\eta_2^n(t,X_{t-},dz)}
\right)\mu(dt,dz)
\\
&\quad
-
\int_0^1\int_{\mathbb R^d}
\Big(
\lambda_1^n(t,X_{t-})\,\eta_1^n(t,X_{t-},dz)
-
\lambda_2^n(t,X_{t-})\,\eta_2^n(t,X_{t-},dz)
\Big)\,dt.
\end{align*}
Since \(\eta_1^n(t,x,\cdot)\) and \(\eta_2^n(t,x,\cdot)\) are probability measures, the compensator term reduces to
\[
\int_0^1
\big(
\lambda_1^n(t,X_{t-})-\lambda_2^n(t,X_{t-})
\big)\,dt.
\]
Moreover,
$
\log\!\left(
\frac{\lambda_1^n\,\eta_1^n}{\lambda_2^n\,\eta_2^n}
\right)
=
\log\frac{\lambda_1^n}{\lambda_2^n}
+
\log\frac{d\eta_1^n}{d\eta_2^n},
$
so that
\begin{align*}
\log\frac{d \mathbb P_1^n}{d\mathbb P_2^n}
&=
\int_0^1
\log\frac{\lambda_1^n(t,X_{t-})}{\lambda_2^n(t,X_{t-})}\,dN_t
-
\int_0^1
\big(
\lambda_1^n(t,X_{t-})-\lambda_2^n(t,X_{t-})
\big)\,dt
\\
&\quad
+
\int_0^1\int_{\mathbb R^d}
\log\frac{d\eta_1^n(t,X_{t-},\cdot)}{d\eta_2^n(t,X_{t-},\cdot)}(z)\,
\mu(dt,dz),
\end{align*}
where \(N_t=\mu([0,t]\times\mathbb R^d)\) is the jump counting process.
Taking expectation under \(\mathbb \mathbb P_1^n\) and using the compensation formula gives
\begin{align*}
H( \mathbb P_1^n\| \mathbb P_2^n)
&=
\mathbb E^{ \mathbb P_1^n}\!\left[
\int_0^1
\lambda_1^n(t,X_{t-})
\log\frac{\lambda_1^n(t,X_{t-})}{\lambda_2^n(t,X_{t-})}\,dt
\right]
\\
&\quad
-
\mathbb E^{\mathbb P_1^n}\!\left[
\int_0^1
\big(
\lambda_1^n(t,X_{t-})-\lambda_2^n(t,X_{t-})
\big)\,dt
\right]
\\
&\quad
+
\mathbb E^{\mathbb P_1^n}\!\left[
\int_0^1
\lambda_1^n(t,X_{t-})
\int_{\mathbb R^d}
\log\frac{d\eta_1^n(t,X_{t-},\cdot)}{d\eta_2^n(t,X_{t-},\cdot)}(z)\,
\eta_1^n(t,X_{t-},dz)\,dt
\right].
\end{align*}
The inner integral is exactly
$
H\!\left(
\eta_1^n(t,X_{t-},\cdot)\,\middle\|\,\eta_2^n(t,X_{t-},\cdot)
\right),
$
which yields \eqref{eq:general_entropy_marked}.
\end{proof}

\begin{proof}[Proof of Theorem \ref{thm:limit of chaos}]
We specialize the entropy identity \eqref{eq:general_entropy_marked} to the present Gaussian framework. Using the explicit formula for the relative entropy between centered Gaussian laws, we obtain
\[ \begin{aligned} H(\mathbb P_1^n \| \mathbb P_2^n) &= n\,\mathbb{E}^{\mathbb P_1^n}\int_0^1 \Biggl[ \lambda_1(t,X_{t-}) \log \frac{\lambda_1(t,X_{t-})}{\lambda_2(t,X_{t-})} -\lambda_1(t,X_{t-}) +\lambda_2(t,X_{t-}) \\ &\qquad\qquad +\frac{\lambda_1(t,X_{t-})}{2}\, \Phi\!\left( \bar\Sigma_2^{-1}(t,X_{t-})\,\bar\Sigma_1(t,X_{t-}) \right) \Biggr]\,dt, \end{aligned} \]
where
$
\Phi(A):=\tr(A)-d-\log\det(A).
$

Dividing by \(n\), and using that \(X_t=X_{t-}\) for Lebesgue-a.e.\ \(t\), we can rewrite the normalized entropy as
\[
\frac1n H(\mathbb P_1^n\|\mathbb P_2^n)
=
\E^{\mathbb P_1^n}\!\left[\int_0^1 \ell(t,X_t)\,dt\right].
\]
Recall that
$
\Psi(\omega):=\int_0^1 \ell(t,\omega_t)\,dt,
\ \omega\in D([0,1],\R^d).
$
Then
$
\frac1n H(\mathbb P_1^n\|\mathbb P_2^n)=\E^{\mathbb P_1^n}[\Psi(X)].
$
By Theorem~\ref{thm:weakconvergence d dim}, we have
\[
\mathbb P_1^n \Rightarrow \P
\qquad\text{on } D([0,1],\R^d),
\]
equipped with the Skorokhod \(J_1\)-topology. Moreover, by Lemma~\ref{lem:continuous limit}, the limiting law \(\P\) is supported on \(C([0,1],\R^d)\).
Since \(\ell\) is continuous by Assumption~\ref{ass:chaos_limit} (ii), the mapping
$
\omega \longmapsto \Psi(\omega)=\int_0^1 \ell(t,\omega_t)\,dt
$
is continuous at every \(\omega\in C([0,1],\R^d)\). Therefore, \(\Psi\) is \(\P\)-a.s.\ continuous. Combined with the weak convergence \(\mathbb P_1^n\Rightarrow \P\), this yields convergence in distribution of \(\Psi(X)\) under \(\mathbb P_1^n\) toward \(\Psi(X)\) under \(\P\). Finally, Assumption~\ref{ass:chaos_limit} (iii) gives the uniform integrability needed to pass to expectations. We conclude that
\[
\E^{\mathbb P_1^n}\!\left[\int_0^1 \ell(t,X_t)\,dt\right]
\longrightarrow
\E^{\P}\!\left[\int_0^1 \ell(t,X_t)\,dt\right].
\]
Therefore,
$
\lim_{n\to\infty}\frac1n H(\mathbb P_1^n\|\mathbb P_2^n)
=
\E^{\P}\!\left[\int_0^1 \ell(t,X_t)\,dt\right].
$
This is exactly \eqref{eq:se}.
\end{proof}

\subsection{Proof of Proposition~\ref{corr:LDP-Rn}}

\begin{proof}
\noindent\emph{Step 1.}
Under the $1$-dim Wiener measure $\mathbb P$, the two increments
\[
\omega(i/n)-\omega((2i-1)/(2n)),
\ \
\omega((2i-1)/(2n))-\omega((i-1)/n)
\]
are independent centered Gaussian random variables with variance $1/(2n)$. Therefore the sum of their square is exponential distributed with rate $n$. Hence, $S^n_k$ are arrival times of a renewal process with i.i.d. exponential interarrival times of rate $n$, and 
 $(N^n_t)$ is a Poisson process with rate $n$ and jump size $1/n$.

According to \cite[Exercise 5.2.12]{DeZe10}, the scaled Poisson process $(N^n)_{n\ge1}$ satisfies a large
deviation principle on $D([0,1],\mathbb R_+)$, endowed with the $J_1$-topology,
with speed $n$ and good rate function
\[
J(a)
=
\begin{cases}
\displaystyle
\int_0^1 \ell(\dot a(t))\,dt,
&
\text{if } a\in A,
\\[1.2ex]
+\infty,
&
\text{otherwise},
\end{cases}
\]
where $\ell(q)=q\log q-q+1$ for $q\ge0$. 

\medskip

\noindent
\emph{Step 2.}  For $a\in D([0,1],\mathbb R_+)$, denote by $X_{\cdot}$ the canonical process on $D:=D([0,\infty); \mathbb R^d)$,
\[
\Phi_n(a)
:=
(X_{\cdot} \mapsto (X_{a(t)})_{t \in [0,1]})_{\#}\mathbb P^n \in\mathcal M_1(D),
\qquad
\Phi(a)
:=(X_{\cdot} \mapsto (X_{a(t)})_{t \in [0,1]} )_{\#} \mathbb W\in \mathcal{P}(D).
\]
Then by definition,
\( 
R_n=\Phi_n(N^n).
\) We claim that if $a_n\to a$ in $D([0,1],\mathbb R_+)$ and $J(a)<\infty$, then
$
\Phi_n(a_n) \to \Phi(a)$ in weak topology.
Indeed, if $J(a)<\infty$, then $a$ is absolutely continuous.
Since convergence in $J_1$ to a continuous limit is equivalent to uniform
convergence, we have
$
\sup_{t\in[0,1]}|a_n(t)-a(t)|\to0.
$
Fix $T>\sup_n a_n(1)\vee a(1)$. According to Theorem~\ref{thm:weakconvergence d dim}, 
$\mathbb P^n$ converges to $\mathbb W$ in weak topology. By the Skorokhod representation theorem, after
possibly enlarging the probability space, we may assume that there exist processes $Z^n_{\cdot} \sim \mathbb P^n$, and
$
Z^n\to B$ almost surely in $D([0,T],\mathbb R^d)$.
As the limit $B$ is continuous, this convergence is actually uniform on
$[0,T]$, i.e.
$
\sup_{s\in[0,T]}|Z_s^n-B_s|\to0$ a.s.
Therefore, we have the estimate
\begin{align*}
\sup_{t\in[0,1]}|Z^n_{a_n(t)}-B_{a(t)}|
&\le
\sup_{s\in[0,T]}|Z_s^n-B_s|
+
\sup_{t\in[0,1]}|B_{a_n(t)}-B_{a(t)}|.
\end{align*}
The first term tends to $0$ almost surely, and the second also tends to $0$
almost surely because Brownian paths are uniformly continuous on $[0,T]$ and
$a_n\to a$ uniformly. Hence
\[
Z^n\circ a_n \to B\circ a
\qquad\text{almost surely in }D([0,1],\mathbb R),
\]
which implies
\(
\Phi_n(a_n)\to\Phi(a).
\) in weak topology. 

\medskip

\noindent
\emph{Step 3.}
Since $(N^n)$ satisfies an LDP on $D([0,1],\mathbb R_+)$ with speed $n$ and rate 
$J$, and since whenever $a_n\to a$ with $J(a)<\infty$ we have
$
\Phi_n(a_n)\to \Phi(a),
$
the extended contraction principle, \cite[Theorem 4.2.23]{DeZe10}, yields that $(R_n)$ satisfies an LDP on
$\mathcal M_1(D)$ with speed $n$ and rate function
$
I(P)
=
\inf\{J(a):\ \Phi(a)=P\},
$
which is exactly \eqref{eq:ratefunction}.

\end{proof}

\subsection{Proof of Theorem~\ref{theorem:strong duality}}

To establish the duality result, we follow the approach of \cite{TanTouzi2013} and reuse their main results.
\noindent
We begin with some preliminary results. Let $(\mu_0^n)$ and $(\mu_1^n)$ be two sequences in $\mathcal M(\mathbb R^d)$
converging weakly to $\mu_0, \mu_1 \in \mathcal M(\mathbb R^d)$, respectively.
We restrict to the case $\liminf_{n\to\infty} V(\mu_0^n,\mu_1^n)<\infty$, since the alternative case is trivial. Up to extracting a subsequence, we may assume that $\sup_{n\ge1} V(\mu_0^n,\mu_1^n)<\infty$. We can then select a sequence $(\mathbb P^n)_{n\ge1}$ with $\mathbb P^n\in\mathcal P(\mu_0^n,\mu_1^n)$ such that: 
\begin{equation}
\sup_{n \ge 1} J(\mathbb{P}^n) < \infty,
\qquad
0 \le J(\mathbb{P}^n) - V(\mu_0^n,\mu_1^n) \longrightarrow 0
\quad \text{as } n \to \infty.
\end{equation}

\begin{lemma}\label{lem:33}
For each $t$ in [0,1], define
$
\Sigma_1^n(t,x):=\Sigma_1^{\mathbb P^n}(t,x)
$
and 
$
\lambda_1^n(t,x):=\frac1d\tr\!\big(\bar\Sigma_2^{-1}(t,x)\Sigma_1^n\big).
$
Then
\begin{equation}\label{eq:entropy_integrability}
\sup_{n\ge1}\,
\mathbb E^{\mathbb P^n}\!\left[\int_0^1 \lambda_1^n(t, X_t) \log \lambda_1^n(t, X_t)\,dt\right]
<\infty.
\end{equation}
\end{lemma}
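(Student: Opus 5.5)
The plan is to bound $\lambda_1\log\lambda_1$ pointwise by the running cost $\ell^{\mathrm{tr}}$ plus terms that are uniformly bounded thanks to Assumption \ref{ass:ref_process_duality}. We then integrate and use $\sup_n J(\mathbb P^n)<\infty$ together with the boundedness of $\xi$.

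\textbf{Step 1: pointwise lower bound on $\ell^{\mathrm{tr}}$.} Fix $(t,x)$ with $\Sigma_1\in\mathbb S^d_{++}$ (the set where $\Sigma_1\in\partial\mathbb S^d_+\setminus\{0\}$ carries infinite cost, hence has zero $dt\otimes d\mathbb P^n$ measure since $J(\mathbb P^n)<\infty$). By construction $\tr(\bar\Sigma_2^{-1}\bar\Sigma_1)=d$. Hence the eigenvalues of $A:=\bar\Sigma_2^{-1/2}\bar\Sigma_1\bar\Sigma_2^{-1/2}\in\mathbb S^d_{++}$ have arithmetic mean $1$, and the AM--GM inequality gives $\log\det(\bar\Sigma_2^{-1}\bar\Sigma_1)=\log\det A\le 0$. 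Therefore
\[
\ell^{\mathrm{tr}}(t,x,\Sigma_1)\ \ge\ \lambda_1\log\lambda_1-\lambda_1\log\lambda_2(t,x)-\lambda_1+\lambda_2(t,x).
\]
Using $\underline b\le\lambda_2\le\overline b$, we get $\lambda_1\log\lambda_1\le \ell^{\mathrm{tr}}+\lambda_1(1+|\log\overline b|\vee|\log\underline b|)$.

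This leaves a linear term in $\lambda_1$, which still has to be absorbed. Since $q\log q$ is superlinear, for $c:=1+|\log\underline b|+|\log\overline b|$ we have $c\,q\le \tfrac12 q\log q+C_c$ for all $q\ge 0$, with an explicit constant $C_c=\sup_{q\ge0}(cq-\tfrac12 q\log q)<\infty$. This yields
\[
\lambda_1\log\lambda_1\le 2\,\ell^{\mathrm{tr}}(t,x,\Sigma_1)+2C_c,
\]
and the same bound holds trivially on $\{\Sigma_1=0\}$, where $\lambda_1=0$ and $\ell^{\mathrm{tr}}=\lambda_2\ge0$.

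\textbf{Step 2: integration.} Apply the bound with $\Sigma_1=\Sigma_1^{\mathbb P^n}$ and integrate over $[0,1]$ under $\mathbb P^n$. The right-hand side is controlled by
\[
2\,\mathbb E^{\mathbb P^n}\!\left[\int_0^1\ell^{\mathrm{tr}}\,dt\right]+2C_c= 2\big(J(\mathbb P^n)-\mathbb E^{\mathbb P^n}[\xi]\big)+2C_c\le 2\sup_n J(\mathbb P^n)+2\|\xi\|_\infty+2C_c<\infty .
\]
This bound is uniform in $n$. Since $q\log q\ge -e^{-1}$, the left-hand side is also bounded below, so the expectation in \eqref{eq:entropy_integrability} is well defined.

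\textbf{Main obstacle.} The only delicate point is the sign of the $\log\det$ term, which is where the trace normalization is essential; AM--GM settles it. A minor bookkeeping issue is that $\lambda_2$ is evaluated at $(t,X_t)$ while $\Sigma_1^{\mathbb P^n}$ is a progressively measurable process. The pointwise inequality is applied $\omega$-by-$\omega$ and $t$-by-$t$, so measurability is inherited from $\Sigma_1^{\mathbb P^n}$ and the continuity of $\lambda_2,\bar\Sigma_2$.
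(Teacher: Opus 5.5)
Your proposal is correct and follows essentially the same route as the paper. Both arguments use AM--GM on the trace-normalized matrix $\bar\Sigma_2^{-1/2}\bar\Sigma_1\bar\Sigma_2^{-1/2}$ to discard the $\log\det$ term, and use the bounds on $\lambda_2$ to reach $\ell^{\mathrm{tr}}\ge\lambda_1\log\lambda_1-K\lambda_1$. Both then absorb the linear term into $\tfrac12\lambda_1\log\lambda_1$ up to a constant, and integrate using $\sup_n J(\mathbb P^n)<\infty$ together with $\|\xi\|_\infty<\infty$.
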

\begin{proof}
Since \(\xi\) is bounded, with \(C_\xi:=\|\xi\|_\infty\), we have
\[
\mathbb E^{\mathbb P^n}\!\left[\int_0^1 \ell^{\mathrm{tr}}(t, X_t, \Sigma_{1,t}^n)\,dt\right]
=
J(\mathbb P^n)-\mathbb E^{\mathbb P^n}\!\left[\xi(X_{1\wedge\cdot})\right]
\le
J(\mathbb P^n)+C_\xi.
\]
Hence
$
\sup_{n\ge1}
\mathbb E^{\mathbb P^n}\!\left[\int_0^1 \ell^{\mathrm{tr}}(t, X_t, \Sigma_{1,t}^n)\,dt\right]
\le
\sup_{n\ge1} J(\mathbb P^n)+C_\xi
<\infty.
$
It is therefore enough to derive a lower bound on \(\ell^{\mathrm{tr}}(t,x, \Sigma_1)\) in terms of
\(\lambda_1\log \lambda_1\), uniformly in \(t\in[0,1]\).
Fix \(t\in[0,1]\), and let \(\Sigma_1\in\mathbb S^d_+\). 
We first consider the case \(\Sigma_1\in\mathbb S_{++}^d\). Let's define
\[
B:=\bar\Sigma_2^{-1/2}(t,x)\,\bar\Sigma_1\,\bar\Sigma_2^{-1/2}(t,x)\in\mathbb S_{++}^d.
\]
Then
$
\tr(B)
=
\tr\!\big(\bar\Sigma_2^{-1}(t,x)\bar\Sigma_1\big)
=
\frac{1}{\lambda_1}\tr\!\big(\bar\Sigma_2^{-1}(t,x),\Sigma_1\big)
=
\frac{d\lambda_1}{\lambda_1}
=
d.
$
Applying the arithmetic-geometric mean inequality to the eigenvalues of \(B\), we obtain
$
\det(B)\le \Big(\frac{\tr(B)}{d}\Big)^d=1.
$
Hence
\[
\log\det\!\big(\bar\Sigma_2^{-1}(t)\bar\Sigma_1\big)
=
\log\det(B)
\le 0,
\]
and therefore
$
-\frac {\lambda_1}{2}\log\det\!\big(\bar\Sigma_2^{-1}(t,x)\bar\Sigma_1\big)\ge0.
$
It follows from the definition of \(\ell^{\mathrm{tr}}\) that
\[
\ell^{\mathrm{tr}}(t, x, \Sigma_1)
\ge
\lambda_1\log\frac{\lambda_1}{\lambda_2(t,x)}-\lambda_1+\lambda_2(t,x)
\ge
\lambda_1\log \lambda_1 - \lambda_1\log \lambda_2(t,x) - \lambda_1.
\]
Next, since \(0<\underline b\le \lambda_2(t,x)\le \overline b\) for all \(t\in[0,1]\), there exists a constant
$
K:=1+\max\{|\log \underline b|,\,|\log \overline b|\}
$
such that
$
|\log \lambda_2(t,x)|\le K-1,
\ t\in[0,1].
$
Thus, for every \(\Sigma_1\in\mathbb S_{++}^d\),
$
\ell^{\mathrm{tr}}(t, x, \Sigma_1)\ge \lambda_1\log \lambda_1-K\lambda_1.
$
The same inequality also holds for \(\Sigma_1=0\), because then \(\lambda_1=0\) and
$
\ell^{\mathrm{tr}}(t, x, 0)=\lambda_2(t,x)\ge0.
$
For \(\Sigma_1\in\partial\mathbb S^d_+\setminus\{0\}\), the inequality is trivial since
$
\ell^{\mathrm{tr}}(t,x,\Sigma_1)=+\infty.
$
Therefore, for all \(\Sigma_1\in\mathbb S^d_+\),
$
\ell^{\mathrm{tr}}(t,x, \Sigma_1)\ge \lambda_1\log \lambda_1-K\lambda_1.
$
Now consider the function
\[
f_K(x):=\frac12 x\log x-Kx,
\qquad x\ge0,
\]
with the convention \(0\log0:=0\). Since \(f_K\) is continuous on \([0,\infty)\) and
\(f_K(x)\to+\infty\) as \(x\to\infty\), there exists a constant \(C_K<\infty\) such that
$
\frac12 x\log x-Kx\ge -C_K,
\qquad x\ge0.
$
Equivalently,
$
x\log x-Kx\ge \frac12 x\log x-C_K$, for $x\ge0$.
Combining this with the previous estimate yields
\[
\ell^{\mathrm{tr}}(t, x, \Sigma_1)\ge \frac12\, \lambda_1\log \lambda_1 - C_K,
\qquad \Sigma_1\in\mathbb S^d_+,\ t\in[0,1].
\]
Applying this bound to \(\Sigma_1=\Sigma_1^n(t,x)\), integrating over \([0,1]\), and taking expectations, we get
\[
\frac12\,
\mathbb E^{\mathbb P^n}\!\left[\int_0^1 \lambda_1^n(t, X_t)\log \lambda_1^n(t, X_t)\,dt\right]
\le
\mathbb E^{\mathbb P^n}\!\left[\int_0^1 \ell^{\mathrm{tr}}(t, X_t, \Sigma_{1,t}^n)\,dt\right]+C_K.
\]
Taking the supremum over \(n\) and using the uniform bound above on
\(\mathbb E^{\mathbb P^n}[\int_0^1 \ell^{\mathrm{tr}}(t, X_t, \Sigma_{1,t}^n)\,dt]\) yields the result.
\end{proof}

% \begin{proposition}\label{prop:L properties}
% The function \(\ell^{\mathrm{tr}}\) is
% nonnegative, convex, and lower semicontinuous on \(\mathbb S_+^d\).
% \end{proposition}

\begin{proposition}\label{prop:L properties}
Under Assumption \ref{ass:ref_process_duality}, the function $\ell^{\mathrm{tr}}$ is nonnegative, convex on
$\mathbb S_+^d$ and 
is lower semicontinuous with respect to the joint variable $(t,x,\Sigma_1)$.
\end{proposition}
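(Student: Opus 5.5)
The plan is to rewrite $\ell^{\mathrm{tr}}$ in a form where convexity and nonnegativity can be read off directly. Fix $(t,x)$ and set $S:=\bar\Sigma_2^{-1/2}(t,x)\,\Sigma_1\,\bar\Sigma_2^{-1/2}(t,x)\in\mathbb S^d_+$, which depends linearly on $\Sigma_1$. Then $\lambda_1=\tr(S)/d$. For $\Sigma_1\in\mathbb S^d_{++}$ we have $\det(\bar\Sigma_2^{-1}\bar\Sigma_1)=\det(S)/\lambda_1^d$. Hence
\[
\ell^{\mathrm{tr}}(t,x,\Sigma_1)=\lambda_1\log\frac{\lambda_1}{\lambda_2}-\lambda_1+\lambda_2+\frac{d}{2}\lambda_1\log\lambda_1-\frac{\lambda_1}{2}\log\det S ,\qquad \lambda_1=\tfrac1d\tr S .
\]
The last two terms combine to $\frac{\lambda_1}{2}\bigl(d\log\lambda_1-\log\det S\bigr)=-\frac{\lambda_1}{2}\log\det(S/\lambda_1)$, which is the perspective of the convex function $A\mapsto-\log\det A$ evaluated at $(S,\lambda_1)$.

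\emph{Nonnegativity.} The first part, $\lambda_1\log(\lambda_1/\lambda_2)-\lambda_1+\lambda_2$, is nonnegative by the elementary inequality $a\log(a/b)-a+b\ge0$. The $\log\det$ part is nonnegative by AM--GM applied to the eigenvalues of $S/\lambda_1$, whose trace equals $d$; this is exactly the argument already used in Lemma~\ref{lem:33}. On the remaining pieces, $\ell^{\mathrm{tr}}=\lambda_2>0$ at $0$ and $+\infty$ on the rest of the boundary.

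\emph{Convexity in $\Sigma_1$ for fixed $(t,x)$.} Since $S$ is linear in $\Sigma_1$, it suffices to show that $g(S):=\phi(\tr S/d)-\frac{\tr S}{2d}\log\det\bigl(dS/\tr S\bigr)$ is convex on $\mathbb S^d_+$, with $g=\lambda_2$ at $0$ and $g=+\infty$ on $\partial\mathbb S^d_+\setminus\{0\}$.
\begin{itemize}
\item The function $\phi(a)=a\log(a/\lambda_2)-a+\lambda_2$ is convex in $a$, and $a=\tr S/d$ is linear, so the first term is convex.
\item The perspective $(A,s)\mapsto -s\log\det(A/s)$ is jointly convex on $\mathbb S^d_{++}\times(0,\infty)$. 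Composing it with the linear map $S\mapsto(S,\tr S/d)$ keeps it convex.
\item To cover the boundary, I would use the closed perspective (lower semicontinuous hull). At $(0,0)$ it takes the value $0$, and on $\{\det A=0,\ s>0\}$ it takes the value $+\infty$. This reproduces the paper's convention, since at $S=0$ the first term gives $\phi(0)=\lambda_2$.
\end{itemize}
The closed perspective of a closed proper convex function is convex, so $g$ is convex on all of $\mathbb S^d_+$ with the stated extended values.

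\emph{Joint lower semicontinuity in $(t,x,\Sigma_1)$.} This is the delicate step. On $[0,1]\times\mathbb R^d\times\mathbb S^d_{++}$ the function is continuous, because $\lambda_2$ and $\bar\Sigma_2$ are continuous with $\lambda_2\ge\underline b>0$, and $\bar\Sigma_2$ is invertible by its definition as an $\mathbb S^d_{++}$-valued map. The map $(t,x,\Sigma_1)\mapsto S$ is continuous. Take a sequence $(t_k,x_k,\Sigma^k)\to(t,x,\Sigma)$. Three cases arise.
\begin{itemize}
\item If $\Sigma\in\mathbb S^d_{++}$, continuity applies.
\item If $\Sigma=0$: the first term tends to $\lambda_2(t,x)$, using $\lambda_1^k\to0$ and $a\log a\to0$. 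The $\log\det$ term is $\ge0$. So $\liminf\ge\lambda_2(t,x)=\ell^{\mathrm{tr}}(t,x,0)$.
\item If $\Sigma\in\partial\mathbb S^d_+\setminus\{0\}$: then $S^k\to S\ne0$ with $\det S=0$, and $\lambda_1^k\to\tr S/d>0$. Therefore $\det(S^k/\lambda_1^k)\to0$, the $\log\det$ term tends to $+\infty$, and the first term stays bounded. So the liminf is $+\infty$.
\end{itemize}
The main care needed is this boundary bookkeeping; the uniform bounds on $\lambda_2$ from Assumption~\ref{ass:ref_process_duality} keep the first term controlled throughout.
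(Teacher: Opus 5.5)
Your proposal is correct and follows essentially the same route as the paper. Both arguments get nonnegativity from $r\log r-r+1\ge0$ together with AM--GM on the trace-normalized matrix, and both get convexity by writing the $\log\det$ term as a perspective composed with the affine map $\Sigma_1\mapsto(\lambda_1,\Sigma_1)$; whitening by $\bar\Sigma_2^{-1/2}$ and using the closed perspective is only a cosmetic difference. Joint lower semicontinuity is proved in both by the same three cases: $\Sigma_1\in\mathbb S^d_{++}$, $\Sigma_1=0$, and $\Sigma_1\in\partial\mathbb S^d_+\setminus\{0\}$.
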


\begin{proof}

\noindent\emph{(i) Joint lower semicontinuity.} 
Let
$
(t_n,x_n,\Sigma_{1,n})\to(t,x,\Sigma_1)
$
in $[0,1]\times\Omega\times\mathbb S_+^d$.

Set
$
\lambda_{1,n}
:=\frac1d\tr\!\bigl(\bar\Sigma_2(t_n,x_n)^{-1}\Sigma_{1,n}\bigr),
\
\lambda_1:=\lambda_1(t,x),
$
and
$
\lambda_{2,n}:=\lambda_2(t_n,x_n),
\
\lambda_2:=\lambda_2(t,x).
$
Since $(t,x)\mapsto \bar\Sigma_2(t,x)^{-1}$ is continuous and $\Sigma_{1,n}\to\Sigma_1$, we have
$
\lambda_{1,n}\to\lambda_1.
$
Also,
$
\lambda_{2,n}\to\lambda_2.
$

\smallskip
\noindent\emph{Case 1: $\Sigma_1\in\mathbb S_{++}^d$.}
Then $\Sigma_{1,n}\in\mathbb S_{++}^d$ for all $n$ large enough, and $\lambda_1>0$. Hence
\[
\bar\Sigma_{1,n}:=\frac{\Sigma_{1,n}}{\lambda_{1,n}}
\longrightarrow
\frac{\Sigma_1}{\lambda_1}
=:\bar\Sigma_1
\in\mathbb S_{++}^d.
\]
By continuity of $(t,x)\mapsto \bar\Sigma_2(t,x)^{-1}$ and of $\log\det$ on $\mathbb S_{++}^d$, we have the following convergence
\[
\log\det\!\bigl(\bar\Sigma_2(t_n,x_n)^{-1}\bar\Sigma_{1,n}\bigr)
\to
\log\det\!\bigl(\bar\Sigma_2(t,x)^{-1}\bar\Sigma_1\bigr),
\]
and therefore
$
\ell^{\mathrm{tr}}(t_n,x_n,\Sigma_{1,n})\to \ell^{\mathrm{tr}}(t,x,\Sigma_1).
$

\smallskip
\noindent\emph{Case 2: $\Sigma_1=0$.}
Then $\lambda_1=0$, hence $\lambda_{1,n}\to 0$. For every $n$,
$
\ell^{\mathrm{tr}}(t_n,x_n,\Sigma_{1,n})
\ge
\lambda_{1,n}\log\frac{\lambda_{1,n}}{\lambda_{2,n}}-\lambda_{1,n}+\lambda_{2,n},
$
because if $\Sigma_{1,n}\in\partial\mathbb S_+^d\setminus\{0\}$ the left-hand side is $+\infty$, while if
$\Sigma_{1,n}\in\mathbb S_{++}^d$ the determinant term is nonnegative by Step 1. Since $\lambda_{2,n}\to\lambda_2>0$, the sequence $(\log\lambda_{2,n})_n$ is bounded, so
$
\lambda_{1,n}\log\lambda_{2,n}\to 0.
$
Together with $\lambda_{1,n}\log\lambda_{1,n}\to 0$, this yields
$
\lambda_{1,n}\log\frac{\lambda_{1,n}}{\lambda_{2,n}}-\lambda_{1,n}+\lambda_{2,n}\to \lambda_2.
$
Thus
$
\liminf_{n\to\infty}\ell^{\mathrm{tr}}(t_n,x_n,\Sigma_n)\ge \lambda_2=\ell^{\mathrm{tr}}(t,x,0).
$

\smallskip
\noindent\emph{Case 3: $\Sigma_1\in\partial\mathbb S_+^d\setminus\{0\}$.}
Then
$
\ell^{\mathrm{tr}}(t,x,\Sigma_1)=+\infty.
$
Since $\bar\Sigma_2(t,x)^{-1}\in\mathbb S_{++}^d$ and $\Sigma_1\neq 0$,
$
\lambda_1=\frac1d\tr\!\bigl(\bar\Sigma_2(t,x)^{-1}\Sigma_1\bigr)>0,
$
hence $\lambda_{1,n}\to\lambda_1>0$. If infinitely many $\Sigma_{1,n}$ are singular and nonzero, then
$
\ell^{\mathrm{tr}}(t_n,x_n,\Sigma_{1,n})=+\infty
$
along a subsequence, and we are done. Otherwise, for $n$ large enough, $\Sigma_{1,n}\in\mathbb S_{++}^d$, and
\[
\bar\Sigma_{1,n}:=\frac{\Sigma_{1,n}}{\lambda_{1,n}}
\longrightarrow
\frac{\Sigma_1}{\lambda_1}
=:\bar\Sigma_1
\in\partial\mathbb S_+^d\setminus\{0\}.
\]
Hence
$
\det(\bar\Sigma_{1,n})\to 0,
$
so, since
$
\det\!\bigl(\bar\Sigma_2(t_n,x_n)^{-1}\bigr)\to
\det\!\bigl(\bar\Sigma_2(t,x)^{-1}\bigr)>0,
$
we get
$$
\log\det\!\bigl(\bar\Sigma_2(t_n,x_n)^{-1}\bar\Sigma_{1,n}\bigr)\to -\infty.
$$
As $\lambda_{1,n}\to\lambda_1>0$, it follows that
$$
-\frac{\lambda_{1,n}}2
\log\det\!\bigl(\bar\Sigma_2(t_n,x_n)^{-1}\bar\Sigma_{1,n}\bigr)\to +\infty,
$$
and therefore
$
\ell^{\mathrm{tr}}(t_n,x_n,\Sigma_{1,n})\to +\infty
=\ell^{\mathrm{tr}}(t,x,\Sigma_1).
$
Thus $\ell^{\mathrm{tr}}$ is jointly lower semicontinuous on
$[0,1]\times\Omega\times\mathbb S_+^d$.

\medskip
For the remaining steps, fix $(t,x)\in[0,1]\times\Omega$. For simplicity, write
$
\bar\Sigma_2=\bar\Sigma_2(t,x),
$
$
\lambda_2=\lambda_2(t,x)
$ and
$
\Sigma_1=\Sigma_1(t,x),
$
$
\lambda_1=\lambda_1(t,x).
$ 

\medskip
\emph{(ii) Nonnegativity.}
\noindent
If $\Sigma_1=0$ or $\Sigma_1\in \partial\mathbb S_+^d\setminus\{0\}$, it is trivial.
If $\Sigma_1\in\mathbb S_{++}^d$
define
$A:=\bar\Sigma_2^{-1/2}\bar\Sigma_1\,\bar\Sigma_2^{-1/2}.
$
Then \(A\in\mathbb S_{++}^d\), and
\[
\frac1d\tr(A)
=
\frac1d\tr(\bar\Sigma_2^{-1}\bar\Sigma_1)
=
\frac1{db}\tr(\bar\Sigma_2^{-1}\Sigma_1)
=1.
\]
By the arithmetic--geometric mean inequality applied to the eigenvalues of \(A\),
$
\det(A)^{1/d}\le \frac1d\tr(A)=1,
$
hence
$
\log\det(\bar\Sigma_2^{-1}\bar\Sigma_1)
=
\log\det(A)\le 0.
$
Therefore,
$
-\frac {\lambda_1}{2}\log\det(\bar\Sigma_2^{-1}\bar\Sigma_1)\ge 0.
$
On the other hand,
\[
\lambda_1\log\frac{\lambda_1}{\lambda_2}-\lambda_1+\lambda_2
=
\lambda_2\left(\frac{\lambda_1}{\lambda_2}\log\frac{\lambda_1}{\lambda_2}-\frac{\lambda_1}{\lambda_2}+1\right)\ge 0,
\]
since \(r\log r-r+1\ge 0\) for all \(r>0\). 
Altogether, \(\ell^{\mathrm{tr}}\ge 0\) on \(\mathbb S_+^d\).

\medskip
\noindent\emph{(iii) Convexity on \(\mathbb S_+^d\).}
Define
\[
\psi:[0,\infty)\to\mathbb R,
\qquad
\psi(\lambda_1):=
\begin{cases}
\lambda_1\log\frac{\lambda_1}{\lambda_2}-\lambda_1+\lambda_2, & \lambda_1>0,\\[4pt]
\lambda_2, & \lambda_1=0.
\end{cases}
\]
Since \(\lambda_1\log \lambda_1\to 0\) as \(\lambda_1\downarrow 0\), \(\psi\) is the continuous extension of
\(\lambda_1\mapsto \lambda_1\log(\lambda_1/\lambda_2)-\lambda_1+\lambda_2\), hence \(\psi\) is convex on \([0,\infty)\). Next define the extended-value function
\[
f:\mathbb S_+^d\to(-\infty,+\infty],
\qquad
f(X):=
\begin{cases}
-\dfrac12\log\det(\bar\Sigma_2^{-1}X), & X\in\mathbb S_{++}^d,\\[6pt]
+\infty, & X\in\partial\mathbb S_+^d.
\end{cases}
\]
Since
\(X\mapsto -\log\det X\) is convex on \(\mathbb S_{++}^d\), it follows that \(f\)
is convex on \(\mathbb S_+^d\) in the extended-value sense. Let
\[
G(b,\Sigma):=
\begin{cases}
\lambda_1\,f(\Sigma_1/\lambda_1), & \lambda_1>0,\\[4pt]
0, & (\lambda_1,\Sigma_1)=(0,0),\\[4pt]
+\infty, & \lambda_1=0,\ \Sigma_1\neq 0.
\end{cases}
\]
Then \(G\) is the perspective of \(f\), hence it is jointly convex on
\([0,\infty)\times\mathbb S_+^d\). Since \(\bar\Sigma_2^{-1}\in\mathbb S_{++}^d\), we have \(\lambda_1(\Sigma_1)\ge 0\) and
\(\lambda_1(\Sigma_1)=0\) if and only if \(\Sigma_1=0\). Moreover,
\(\Sigma_1\mapsto (\lambda_1(\Sigma_1),\Sigma_1)\) is affine. Therefore both
$
\Sigma_1\mapsto \psi(\lambda_1(\Sigma_1))
\qquad\text{and}\qquad
\Sigma_1\mapsto G(\lambda_1(\Sigma_1),\Sigma_1)
$
are convex on \(\mathbb S_+^d\). Finally, one checks directly from the three cases in the definition of \(\ell^{\mathrm{tr}}\) that
$
\ell^{\mathrm{tr}}(t, x, \Sigma_1)=\psi(\lambda_1(\Sigma_1))+G(\lambda_1(\Sigma_1),\Sigma_1),
\ \Sigma_1\in\mathbb S_+^d.
$
Hence \(\ell^{\mathrm{tr}}\) is convex on \(\mathbb S_+^d\).
\end{proof}

The first step to establish the strong duality result is to prove tightness.
For this purpose, we introduce an enlarged canonical space
$
\bar\Omega := C\big([0,1];\,\mathbb R^d \times \mathbb R^{d^2}\big),
$
endowed with its Borel $\sigma$-field $\bar{\mathcal F}_1$ and the canonical filtration
$\bar{\mathbb F}=(\bar{\mathcal F}_t)_{0\le t\le1}$.
We denote by $(X,A)$ the canonical process on $\bar\Omega$, where $X$ is a $d$-dimensional process
and $A$ is is a $d^2$-dimensional process.
We consider probability measures $\bar{\mathbb P}$ on $(\bar\Omega,\bar{\mathcal F}_1)$ such that
$X$ is a $\bar{\mathbb F}$-continuous local martingale under $\bar{\mathbb P}$ characterized by $A$.
Moreover, we assume that the process $A$ is $\bar{\mathbb P}$-a.s. absolutely continuous with
respect to the Lebesgue measure.
In this case, we define the instantaneous covariance process
$\Sigma_1=(\Sigma_1(t))_{0\le t\le1}$ by
$
\Sigma_1(t)
:= \limsup_{n\to\infty} n\bigl(A_t - A_{t-1/n}\bigr),
\ dt\times d\bar{\mathbb P}\text{-a.e.}
$
and assume that $\Sigma_1(t)\in\mathbb{S}_d^{
+}$ almost everywhere.
We denote by $\bar{\mathcal P}$ the collection of all probability measures
$\bar{\mathbb P}$ on $(\bar\Omega,\bar{\mathcal F}_1)$ satisfying the above conditions.
For $\mu_0,\mu_1\in\mathcal M(\mathbb R^d)$, we further define
$
\bar{\mathcal P}(\mu_0)
:= \bigl\{ \bar{\mathbb P}\in\bar{\mathcal P} : \bar{\mathbb P}\circ X_0^{-1}=\mu_0 \bigr\}$,
$\bar{\mathcal P}(\mu_0,\mu_1)
:= \bigl\{ \bar{\mathbb P}\in\bar{\mathcal P}(\mu_0) :
\bar{\mathbb P}\circ X_1^{-1}=\mu_1 \bigr\}.
$
Finally, for $\bar{\mathbb P}\in\bar{\mathcal P}$, we define the lifted cost functional
\[
\bar J(\bar{\mathbb P})
:=
\mathbb E^{\bar{\mathbb P}}\!\left[
\int_0^1
\ell^{\mathrm{tr}}\bigl(t, X_t, \Sigma_{1,t}^{\bar{\mathbb P}}\bigr)\,dt
+
\xi\bigl(X_{\cdot \wedge 1}\bigr)
\right].
\]
where $\ell^{\mathrm{tr}}:[0,1]\times \mathbb R^d \times \mathbb{S}_d^{+}\to\mathbb R$ and \(\xi:C([0,1];\mathbb R^d)\to\mathbb R\) are the costs introduced in the primal problem.
\begin{lemma}[Young / entropy inequality]\label{lem:young}
For all $x\ge 0$ and all $y\in\R$,
\begin{equation}\label{eq:young}
xy \le x\log x + e^{y-1}.
\end{equation}
\end{lemma}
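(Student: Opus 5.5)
The inequality \eqref{eq:young} is the Fenchel--Young inequality for the convex function $\varphi(x):=x\log x$ on $[0,\infty)$, with the convention $0\log 0:=0$. Its Legendre transform is
\[
\varphi^*(y)=\sup_{x\ge0}\{xy-x\log x\}.
\]
The plan is to compute $\varphi^*$ explicitly and show that $\varphi^*(y)=e^{y-1}$. The claim then follows at once from $xy-\varphi(x)\le\varphi^*(y)$.

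Concretely, I fix $y\in\mathbb R$ and set $g(x):=xy-x\log x$ for $x\ge0$, with $g(0)=0$. On $(0,\infty)$, $g$ is smooth and strictly concave, since $g''(x)=-1/x<0$. Its derivative is $g'(x)=y-\log x-1$, which vanishes exactly at $x^*=e^{y-1}>0$. Concavity makes this critical point the global maximizer on $(0,\infty)$, and the value there is
\[
g(x^*)=e^{y-1}\bigl(y-(y-1)\bigr)=e^{y-1}.
\]
The endpoint $x=0$ gives $g(0)=0<e^{y-1}$, which is also consistent with continuity of $g$ at $0$, because $x\log x\to0$. Hence $g(x)\le e^{y-1}$ for all $x\ge0$, which after rearranging is exactly \eqref{eq:young}.

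There is no real obstacle here. The only point needing care is the boundary case $x=0$, which is handled by the convention $0\log0=0$: the inequality then reads $0\le e^{y-1}$ and holds trivially. As an alternative that avoids calculus, I could write $e^{y-1}-xy+x\log x=x\bigl(r-\log r-1\bigr)$ with $r:=e^{y-1}/x$ for $x>0$, and conclude from the elementary bound $r-1-\log r\ge0$.
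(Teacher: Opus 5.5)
Your proof is correct and is essentially the paper's argument: fix $y$, optimize $x\mapsto x\log x-xy$ (equivalently, your concave $g$) at its critical point, and read off the extremal value $e^{y-1}$. Your critical point $x^*=e^{y-1}$ is the right one. The paper's text places the minimizer at $x=e^y$, which is a slip, although the minimum value $-e^{y-1}$ it states is correct.
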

\begin{proof}
For fixed $y$, the convex function $x\mapsto x\log x - xy$ attains its minimum at $x=e^y$,
with value $-e^{y-1}$. Rearranging yields \eqref{eq:young}.
\end{proof}

\begin{remark}[Control of the positive part]\label{rem:positivepart}
For all $x\ge 0$, one has $x\log x\ge -1/e$. Consequently, for every $n$ and every $t$ and $x$,
$
(\lambda_1^n(t,x)\log \lambda_1^n(t,x))_+ \le \lambda_1^n(t,x)\log \lambda_1^n(t,x) + \frac1e.
$
In particular, under \eqref{eq:entropy_integrability},
\[
\sup_{n\ge 1}\ \mathbb E^{\mathbb{P}^n}\!\left[\int_0^1 (\lambda_1^n(t,X_t)\log \lambda_1^n(t,X_t))_+\,dt\right]\le C+\frac1e.
\]
\end{remark}

\begin{lemma}[Small-set control]\label{lem:smallset}
Let $A\subset[0,1]$ be measurable with Lebesgue measure $|A|=\delta\in(0,1)$.
Under \eqref{eq:entropy_integrability},
\[
\sup_{n\ge 1}\ \mathbb E^{\mathbb{P}^n}\!\left[\int_A \lambda_1^n(t,X_t)\,dt\right]
\le \frac{C+\frac{2}{e}}{\log(1/\delta)}.
\]
\end{lemma}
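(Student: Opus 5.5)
The plan is to apply the Young/entropy inequality of Lemma~\ref{lem:young} pointwise with a $\delta$-dependent choice of $y$, and then integrate over $A$. Fix $n$, $t\in A$, and write $x=\lambda_1^n(t,X_t)\ge 0$. Taking $y=\log(1/\delta)>0$ in \eqref{eq:young} gives
\[
\lambda_1^n(t,X_t)\log(1/\delta)\le \lambda_1^n(t,X_t)\log\lambda_1^n(t,X_t)+e^{\log(1/\delta)-1}
=\lambda_1^n(t,X_t)\log\lambda_1^n(t,X_t)+\frac{1}{e\delta}.
\]
Since $x\log x$ may be negative, we replace it by its positive part, $x\log x\le (x\log x)_+$. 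This keeps the right-hand side nonnegative, which will matter when we enlarge the domain of integration.

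Next we integrate over $t\in A$ and take $\mathbb E^{\mathbb P^n}$. The constant term contributes $|A|\cdot\frac{1}{e\delta}=\frac1e$. For the entropy term, nonnegativity of the positive part lets us pass from $A$ to all of $[0,1]$:
\[
\int_A(\lambda_1^n\log\lambda_1^n)_+\,dt\le\int_0^1(\lambda_1^n\log\lambda_1^n)_+\,dt.
\]
By Remark~\ref{rem:positivepart}, which relies on \eqref{eq:entropy_integrability} from Lemma~\ref{lem:33}, the expectation of the right-hand side is at most $C+\frac1e$ uniformly in $n$.

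Combining the two pieces yields
\[
\log(1/\delta)\,\mathbb E^{\mathbb P^n}\!\left[\int_A\lambda_1^n(t,X_t)\,dt\right]\le C+\frac2e .
\]
Since $\delta\in(0,1)$ we have $\log(1/\delta)>0$, so dividing by it and taking the supremum over $n$ gives the claim.

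There is no real obstacle here. The only point needing care is the choice $y=\log(1/\delta)$, which is what makes the exponential term integrate to exactly $1/e$ over a set of measure $\delta$. The other subtlety is using the positive part before extending the integral from $A$ to $[0,1]$. Measurability of $t\mapsto\lambda_1^n(t,X_t)$ is inherited from progressive measurability of $\Sigma_1^{\mathbb P^n}$, and Tonelli justifies exchanging the expectation and the time integral.
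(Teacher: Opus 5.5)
Your proposal is correct and follows essentially the same argument as the paper: Young's inequality with $y=\log(1/\delta)$, replacing $x\log x$ by its positive part so the integral over $A$ can be enlarged to $[0,1]$, the bound $C+\frac1e$ from Remark~\ref{rem:positivepart}, and the extra $\frac1e$ coming from $|A|\cdot\frac{1}{e\delta}$. Your argument is pathwise, as is the paper's, so it also covers the random intervals $[\tau,\tau+\delta]$ to which the lemma is later applied.
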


\begin{proof}
Fix $n$ and $\delta\in(0,1)$. Apply Lemma~\ref{lem:young} with
$x=\lambda_1^n(t, x)\ge 0$ and $y=\log(1/\delta)$ to obtain, for each $t$,
\[
\lambda_1^n(t, x)\log(1/\delta)\le \lambda_1^n(t, x)\log(\lambda_1^n(t, x)) + e^{\log(1/\delta)-1}
= \lambda_1^n(t, x)\log(\lambda_1^n(t, x)) + \frac{1}{e\delta}.
\]
Since $u\le u_+$ for all $u\in\mathbb R$, we may bound $\lambda_1^n(t, x)\log(\lambda_1^n(t, x))\le (\lambda_1^n(t, x)\log \lambda_1^n(t, x))_+$,
hence
\[
\lambda_1^n(t, x)\log(1/\delta)\le (\lambda_1^n(t, x)\log \lambda_1^n(t, x))_+ + \frac{1}{e\delta}.
\]
Integrate over $A$:
\[
\log(1/\delta)\int_A \lambda_1^n(t, x)\,dt
\le \int_A (\lambda_1^n(t, x)\log \lambda_1^n(t, x))_+\,dt + \int_A \frac{1}{e\delta}\,dt
= \int_A (\lambda_1^n(t, x)\log \lambda_1^n(t, x))_+\,dt + \frac{1}{e},
\]
because $|A|=\delta$. Divide by $\log(1/\delta)>0$:
\[
\int_A \lambda_1^n(t, x)\,dt
\le \frac{1}{\log(1/\delta)}\int_A (\lambda_1^n(t, x)\log \lambda_1^n(t, x))_+\,dt
+ \frac{1}{e\log(1/\delta)}.
\]
Since $(\lambda_1^n(t, x)\log \lambda_1^n(t, x))_+\ge 0$, we have
\[
\int_A (\lambda_1^n(t, x)\log \lambda_1^n(t, x))_+\,dt \le \int_0^1 (\lambda_1^n(t, x)\log \lambda_1^n(t, x))_+\,dt.
\]
Take expectations and use Remark~\ref{rem:positivepart}: 
\begin{align*}
\mathbb E^{\mathbb{P}^n}\!\left[\int_A \lambda_1^n(t, X_t)\,dt\right]
\le & \frac{1}{\log(1/\delta)}\,
\mathbb E^{\mathbb{P}^n}\!\left[\int_0^1 (\lambda_1^n(t, X_t)\log \lambda_1^n(t, X_t))_+\,dt\right]
+ \frac{1}{e\log(1/\delta)} \\
\le & \frac{C+\frac{1}{e}}{\log(1/\delta)}+\frac{1}{e\log(1/\delta)}.
\end{align*}
Thus
\[
\mathbb E^{\mathbb{P}^n}\!\left[\int_A \lambda_1^n(t, X_t)\,dt\right]\le \frac{C+\frac{2}{e}}{\log(1/\delta)}.
\]
Taking $\sup_n$ finishes the proof.
\end{proof}

\begin{proposition}[Tightness of $\{{\rm Law}(X^n)\}$]
\label{prop:tightness_Xt}

Assume:
\begin{enumerate}
\item[{\rm (i)}] $\{{\rm Law}(X^n_0)\}$ is tight in \(\mathcal M_1(\mathbb R^d)\);
\item[{\rm (ii)}] the entropy bound \eqref{eq:entropy_integrability} holds.
\end{enumerate}
Then $\{{\rm Law}(X^n)\}$ is tight in $\mathcal M_1\bigl(C([0,1];\mathbb R^d)\bigr)$.
\end{proposition}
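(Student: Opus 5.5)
We verify the Kolmogorov--Chentsov/Aldous-type criterion on $C([0,1];\mathbb R^d)$: tightness of the initial laws, which is assumption (i), plus a uniform control of the modulus of continuity. We use the stopping-time version: it suffices to show that for every $\eta>0$
\[
\lim_{\delta\downarrow0}\sup_{n\ge1}\sup_{\tau\le 1-\delta}\mathbb P^n\bigl(|X_{\tau+\delta}-X_\tau|>\eta\bigr)=0,
\]
where $\tau$ ranges over stopping times. Since the paths are continuous, Aldous' criterion on $D$ together with continuity of all laws gives tightness in $C$, because the $J_1$ topology restricted to $C$ is the uniform one and $C$ is closed in $D$. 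An alternative is to bound $\mathbb P^n(\sup_{|t-s|\le\delta}|X_t-X_s|>\eta)$ via a grid and BDG, but the stopping-time route avoids fourth moments, which we do not control.

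\textbf{Step 1: a domination of the trace of $\Sigma_1^n$ by $\lambda_1^n$.} Under $\mathbb P^n$, $X$ is a continuous local martingale with $d\langle X\rangle_t=\Sigma_{1,t}^n\,dt$. By Assumption~\ref{ass:ref_process_duality}, $\bar\Sigma_2\preceq MI_d$, so $\bar\Sigma_2^{-1}\succeq M^{-1}I_d$. Hence
\[
\tr(\Sigma_{1,t}^n)\le M\,\tr\bigl(\bar\Sigma_2^{-1}(t,X_t)\Sigma_{1,t}^n\bigr)=dM\,\lambda_1^n(t,X_t).
\]
Thus the quadratic variation increment of $X$ over any time set is bounded by $dM$ times the integral of $\lambda_1^n$ over that set.

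\textbf{Step 2: uniform equi-integrability in time.} This is the main point. We want
\[
\mathbb E^{\mathbb P^n}\Bigl[\int_\tau^{\tau+\delta}\lambda_1^n(t,X_t)\,dt\Bigr]\le \frac{C'}{\log(1/\delta)},
\]
uniformly in $n$ and in the stopping time $\tau$. Lemma~\ref{lem:smallset} is stated only for a deterministic set $A$, but its proof is pathwise: for each $\omega$, apply Lemma~\ref{lem:young} with $y=\log(1/\delta)$ on the random interval $[\tau(\omega),\tau(\omega)+\delta]$, whose Lebesgue measure is $\delta$. This gives
\[
\log(1/\delta)\int_\tau^{\tau+\delta}\lambda_1^n\,dt\le\int_0^1(\lambda_1^n\log\lambda_1^n)_+\,dt+\frac1e .
\]
Taking expectations and using Remark~\ref{rem:positivepart} together with \eqref{eq:entropy_integrability} yields the bound with $C'=C+2/e$. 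I expect this adaptation to be the only delicate point: the estimate must hold for random intervals, and integrability must be checked so that the expectations make sense.

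\textbf{Step 3: localization and conclusion.} Let $\tau_r:=\inf\{t:\int_0^t\lambda_1^n(s,X_s)\,ds\ge r\}\wedge1$, or alternatively the exit time of a ball. The stopped process is a square-integrable martingale, and by optional stopping and Step 1,
\[
\mathbb E^{\mathbb P^n}\bigl|X_{(\tau+\delta)\wedge\tau_r}-X_{\tau\wedge\tau_r}\bigr|^2\le dM\,\mathbb E^{\mathbb P^n}\Bigl[\int_\tau^{\tau+\delta}\lambda_1^n\,dt\Bigr]\le \frac{dMC'}{\log(1/\delta)}.
\]
By Markov's inequality, the probability that this stopped increment exceeds $\eta$ is at most $dMC'/(\eta^2\log(1/\delta))$.

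The localization error is $\mathbb P^n(\tau_r<1)\le \mathbb P^n\bigl(\int_0^1\lambda_1^n\,dt\ge r\bigr)$. By Markov's inequality this is at most $r^{-1}\sup_n\mathbb E^{\mathbb P^n}\int_0^1\lambda_1^n\,dt$, and this supremum is finite by Lemma~\ref{lem:smallset} applied with a partition of $[0,1]$ (or by the same Young inequality with $\delta=1/2$). Let $\delta\downarrow0$ first and then $r\to\infty$. This gives the Aldous condition uniformly in $n$, and combined with (i) it proves tightness of $\{\mathrm{Law}(X^n)\}$ in $\mathcal M_1(C([0,1];\mathbb R^d))$.
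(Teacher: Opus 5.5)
Your proof is correct and follows the paper's route: Aldous' criterion, the domination $\tr(\Sigma_{1,t}^n)\le dM\,\lambda_1^n(t,X_t)$ coming from $\bar\Sigma_2\preceq MI_d$, and the pathwise Young/small-set bound on the random interval $[\tau,\tau+\delta]$, which gives a bound of order $dM(C+2/e)/(\eta^2\log(1/\delta))$ uniformly in $n$ and $\tau$. The only difference is your localization by $\tau_r$, a harmless refinement that rigorously justifies the second-moment (It\^o isometry) identity, which the paper applies directly without checking square integrability.
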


\begin{proof}
We use Aldous' criterion for tightness in $\mathcal M_1\bigl(D([0,1];\mathbb R^d)\bigr)$.
Tightness of $\{{\rm Law}(X^n_0)\}$ is assumed, so it remains to check the increment condition:
for every $\varepsilon>0$,
\begin{equation}\label{eq:aldous}
\lim_{\delta\downarrow 0}\ \sup_{n\ge 1}\ \sup_{\tau\le 1-\delta}
\mathbb P^n\!\left(\|X_{\tau+\delta}^n - X_\tau^n\|>\varepsilon\right)=0,
\end{equation}
where the inner supremum is taken over all stopping times $\tau$ bounded by $1-\delta$.

Fix $\varepsilon>0$ and $\delta\in(0,1)$. Let $\tau$ be a stopping time with $\tau\le 1-\delta$.
Using the $d$--dimensional It\^o isometry (increment form),
\[
\begin{aligned}
\mathbb E^{\mathbb{P}^n}\!\left[\|X_{\tau+\delta}^n - X_\tau^n\|^2\right]
&=
\mathbb E^{\mathbb{P}^n}\!\left[
\left\|
\int_\tau^{\tau+\delta}\sigma_1^n(s, X_s)\,dW_s^n
\right\|^2
\right] \\
&=
\mathbb E^{\mathbb{P}^n}\!\left[
\int_\tau^{\tau+\delta}
\|\sigma_1^n(s, X_s)\|_F^2\,ds
\right] \\
&\leq
d\,M\,
\mathbb E^{\mathbb{P}^n}\!\left[
\int_\tau^{\tau+\delta}
\lambda_1^n(s, X_s)\,ds
\right].
\end{aligned}
\]
where in the last inequality we used the definition of $\lambda$ and the boundedness of $\bar\Sigma_2$.
For each $\omega$, the random set $A(\omega)=[\tau(\omega),\tau(\omega)+\delta]$ has Lebesgue measure $\delta$,
so Lemma~\ref{lem:smallset} (whose bound depends only on $\delta$) yields
\[
\sup_{n\ge 1}\ \sup_{\tau\le 1-\delta}\ 
\mathbb E^{\mathbb{P}^n}\!\left[\|X_{\tau+\delta}^n - X_\tau^n\|^2\right]
\le d\,M\,\frac{C+\frac{2}{e}}{\log(1/\delta)}.
\]
By Chebyshev's inequality,
$
\mathbb P^n\!\left(\|X_{\tau+\delta}^n - X_\tau^n\|>\varepsilon\right)
\le \frac{1}{\varepsilon^2}\,
\mathbb E^{\mathbb{P}^n}\!\left[\|X_{\tau+\delta}^n - X_\tau^n\|^2\right],
$
hence
\[
\sup_{n\ge 1}\ \sup_{\tau\le 1-\delta}\ 
\mathbb P^n\!\left(\|X_{\tau+\delta}^n - X_\tau^n\|>\varepsilon\right)
\le \frac{1}{\varepsilon^2}\cdot d\,M\,\frac{C+\frac{2}{e}}{\log(1/\delta)}.
\]
Since $\log(1/\delta)\to\infty$ as $\delta\downarrow 0$, the right-hand side tends to $0$,
which proves \eqref{eq:aldous}. Therefore, Aldous' criterion implies tightness of
$\{{\rm Law}(X^n)\}$  in $\mathcal M_1\bigl(C([0,1];\mathbb R^d)\bigr)$.
\end{proof}

\begin{proposition}[Tightness of $A_t^n$]
\label{prop:Tightness_At}
Let $(\Omega,\mathcal F,(\mathcal F_t)_{t\in[0,1]},\mathbb{P}^n)$ be probability spaces and
let $\Sigma_1^n=(\Sigma_{1,t}^n)_{t\in[0,1]}$ be $\mathbb S_+^d$--valued progressively measurable processes.
Then the laws of $(A_t^n)_{n\ge 1}$ form a tight family in
$\mathcal M_1\bigl(C([0,1];\mathbb S_+^d)\bigr)$,
equipped with the uniform topology induced by any matrix norm.
\end{proposition}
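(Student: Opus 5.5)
Here $A^n_t=\int_0^t\Sigma^n_{1,s}\,ds$ is the quadratic variation of $X$ under $\mathbb P^n$, where $\mathbb P^n$ is the near-optimal sequence fixed before Lemma~\ref{lem:33}. The statement as written omits it, but we work under the entropy bound \eqref{eq:entropy_integrability}, which holds by Lemma~\ref{lem:33}. We apply the Arzel\`a--Ascoli characterization of tightness in $C([0,1];\mathbb S^d_+)$. Since $A^n_0=0$ for all $n$, it suffices to show that, for every $\varepsilon>0$,
\[
\lim_{\delta\downarrow0}\sup_{n\ge1}\mathbb P^n\Bigl(\sup_{|t-s|\le\delta}\|A^n_t-A^n_s\|>\varepsilon\Bigr)=0 .
\]
The point of using $A$ rather than $X$ is that $A^n$ is nondecreasing in the Loewner order. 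This lets us control increments by a trace, and then by $\lambda^n_1$.

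\emph{Step 1: reduction to a scalar modulus.} For $s\le t$ the increment $A^n_t-A^n_s=\int_s^t\Sigma^n_{1,u}\,du$ lies in $\mathbb S^d_+$. Hence its operator norm, and up to a dimensional constant any matrix norm, is bounded by its trace. Using $\Sigma^n_1\preceq\|\bar\Sigma_2\|\,\tr(\bar\Sigma_2^{-1}\Sigma^n_1)\,I_d$ and $\bar\Sigma_2\preceq MI_d$ from Assumption~\ref{ass:ref_process_duality}, we get
\[
\tr(\Sigma^n_{1,u})\le dM\,\lambda^n_1(u,X_u).
\]
Therefore
\[
\sup_{|t-s|\le\delta}\|A^n_t-A^n_s\|\le C_dM\sup_{|t-s|\le\delta}\int_s^t\lambda^n_1(u,X_u)\,du .
\]

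\emph{Step 2: pathwise uniform integrability.} This is the key step, since Lemma~\ref{lem:smallset} controls only a deterministic set, or the expectation for a single interval, and not the supremum over all windows. To get the supremum we redo the Young inequality argument pathwise. Apply Lemma~\ref{lem:young} with $x=\lambda^n_1$ and $y=\log(1/\delta)$, and integrate over any interval $[s,t]$ of length at most $\delta$. This gives, for every $\omega$,
\[
\sup_{|t-s|\le\delta}\int_s^t\lambda^n_1\,du\le\frac{1}{\log(1/\delta)}\Bigl(\int_0^1(\lambda^n_1\log\lambda^n_1)_+\,du+\frac1e\Bigr).
\]
The right-hand side no longer depends on the window. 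Taking expectations and using Remark~\ref{rem:positivepart} yields
\[
\sup_n\mathbb E^{\mathbb P^n}\Bigl[\sup_{|t-s|\le\delta}\|A^n_t-A^n_s\|\Bigr]\le\frac{C'}{\log(1/\delta)}.
\]

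\emph{Step 3: conclusion.} Markov's inequality then gives the modulus-of-continuity condition uniformly in $n$. The same bound with $\delta$ small, applied to $\|A^n_1\|$, also gives uniform boundedness in probability, although this already follows from $A^n_0=0$. Tightness of the laws of $A^n$ in $\mathcal M_1(C([0,1];\mathbb S^d_+))$ follows, and $\mathbb S^d_+$ is closed, so the limit points take values there.

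The only delicate point is making the estimate of Step 2 pathwise rather than in expectation; the Young inequality handles this directly.
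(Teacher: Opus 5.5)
Your proof is correct and follows the same route as the paper. Both use Arzel\`a--Ascoli, bound the Loewner-monotone increments of $A^n$ by $C\int_s^t\lambda_1^n\,du$ using $\bar\Sigma_2\preceq MI_d$, and control this windowed integral uniformly in $n$ through the entropy bound \eqref{eq:entropy_integrability}; you are right that this bound is an implicit hypothesis of the statement. The only difference is the window estimate. The paper gets it qualitatively from de la Vall\'ee-Poussin uniform integrability and the truncation $\int_s^t\lambda_1^n\le K\delta+\int_0^1\lambda_1^n\mathbf 1_{\{\lambda_1^n>K\}}$. Your pathwise Young-inequality bound is slightly cleaner and directly gives the explicit rate $\E\bigl[\omega(A^n,\delta)\bigr]\lesssim 1/\log(1/\delta)$.
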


\begin{proof}
We use the Arzel\`a--Ascoli compactness criterion.

By \eqref{eq:entropy_integrability} and the de la Vall\'ee--Poussin criterion,
\(\{\lambda_1^n\}_{n\ge1}\) is uniformly integrable on \([0,1]\times\Omega\), i.e.
for every \(\varepsilon>0\) there exists \(K>0\) such that
\begin{equation}\label{eq:UI-short}
\sup_n \E^{\mathbb{P}^n}\int_0^1 \lambda_1^n(t, X_t)\mathbf 1_{\{\lambda_1^n(t, X_t)>K\}}\,dt<\varepsilon.
\end{equation}
In particular,
\begin{equation}\label{eq:L1-short}
\sup_n \E^{\mathbb{P}^n}\int_0^1 \lambda_1^n(t, X_t)\,dt<\infty.
\end{equation}

Since \(A_t^n-A_s^n=\int_s^t \Sigma_1^n(u, x)\,du\succeq0\) for \(s\le t\), and any norm on
\(\mathbb S^d\) is bounded by the trace on \(\mathbb S_+^d\), there is a constant \(C>0\) such that
$
\|A_t^n-A_s^n\|
\le C\int_s^t \tr(\Sigma_1^n(u,x))\,du.
$
Using \(\tr(\Sigma_1^n(u,x))\le C\lambda_1^n(u,x)\), we obtain
\begin{equation}\label{eq:key-estimate}
\|A_t^n-A_s^n\|
\le C\int_s^t \lambda_1^n(u,x)\,du,
\qquad 0\le s\le t\le1.
\end{equation}

We first prove uniform boundedness in probability. Since \(t\mapsto A_t^n\) is increasing in the Loewner order,
\[
\|A^n\|_\infty=\sup_{t\in[0,1]}\|A_t^n\|
\le C\,\tr(A_1^n)
\le C\int_0^1 \lambda_1^n(t,x)\,dt.
\]
Hence, by Markov's inequality and \eqref{eq:L1-short},
$
\sup_n P_n(\|A^n\|_\infty>S)\xrightarrow[S\to\infty]{}0.
$

Next, let \(\omega(f,\delta):=\sup_{|t-s|\le\delta}\|f(t)-f(s)\|\). From \eqref{eq:key-estimate},
\[
\omega(A^n,\delta)\le C\,\sup_{|t-s|\le\delta}\int_s^t \lambda_1^n(u,x)\,du.
\]
Fix \(\eta>0\) and \(\varepsilon>0\). Choose \(K\) as in \eqref{eq:UI-short}. Then for \(|t-s|\le\delta\),
\[
\int_s^t \lambda_1^n(u,x)\,du
\le K\delta+\int_0^1 \lambda_1^n(u,x)\mathbf 1_{\{\lambda_1^n(u,x)>K\}}\,du.
\]
Choose \(\delta>0\) so that \(CK\delta\le \eta/2\). Then
\[
\mathbb P^n\big(\omega(A^n,\delta)>\eta\big)
\le
\mathbb P^n\!\left(
\int_0^1 \lambda_1^n(u,X_u)\mathbf 1_{\{\lambda_1^n(u,X_u)>K\}}\,du>\frac{\eta}{2C}
\right),
\]
and therefore, by Markov's inequality and \eqref{eq:UI-short},
\[
\sup_n \mathbb P^n\big(\omega(A^n,\delta)>\eta\big)
\le \frac{2C}{\eta}\,
\sup_n \E^{\mathbb{P}^n}\int_0^1 \lambda_1^n(u,X_u)\mathbf 1_{\{\lambda_1^n(u,X_u)>K\}}\,du
\le \frac{2C}{\eta}\varepsilon.
\]
Since \(\varepsilon\) is arbitrary,
$
\lim_{\delta\downarrow0}\sup_n \mathbb P^n\big(\omega(A^n,\delta)>\eta\big)=0.
$
We now conclude by Arzel\`a--Ascoli. Fix \(\epsilon>0\). Choose \(S>0\) such that      
$
\sup_n \mathbb P^n(\|A^n\|_\infty>S)<\epsilon/2.
$
Choose \(\eta_m\downarrow0\) and, for each \(m\), \(\delta_m>0\) such that
$
\sup_n \mathbb P^n\big(\omega(A^n,\delta_m)>\eta_m\big)\le \epsilon\,2^{-(m+1)}.
$

Then
$
\mathcal K
:=
\Big\{
f\in C([0,1];\mathbb S^d):
\|f\|_\infty\le S,\ \omega(f,\delta_m)\le \eta_m\ \forall m
\Big\}
$
has compact closure in \(C([0,1];\mathbb S^d)\) by the deterministic Arzel\`a--Ascoli theorem, and
\[
\mathbb P^n(A^n\notin\overline{\mathcal K})
\le
\mathbb P^n(\|A^n\|_\infty>S)
+\sum_{m\ge1}\mathbb P^n\big(\omega(A^n,\delta_m)>\eta_m\big)
<\epsilon.
\]
Thus the laws of \(A^n\) are tight in $\mathcal M_1\bigl(C([0,1];\mathbb R^d)\bigr)$.
\end{proof}

\begin{corollary}
[Tightness of $\bar{\mathbb P}^n$]
\label{thm:tightness}
The sequence $(\bar{\mathbb P}^n)_{n\ge1}$ is tight on $\bar\Omega$.
\end{corollary}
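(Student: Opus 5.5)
The plan is to combine the two marginal tightness results. Here $\bar{\mathbb P}^n$ denotes the lift of $\mathbb P^n$ to $\bar\Omega$, that is, the law of the pair $(X,A^{\mathbb P^n})$. The argument uses the elementary fact that a family of probability measures on a product of Polish spaces is tight as soon as each family of marginals is tight. Indeed, given $\epsilon>0$, choose compacts $K_1\subset C([0,1];\mathbb R^d)$ and $K_2\subset C([0,1];\mathbb R^{d^2})$ with $\sup_n\mathbb P^n(X\notin K_1)<\epsilon/2$ and $\sup_n\mathbb P^n(A^n\notin K_2)<\epsilon/2$. Then $K_1\times K_2$ is compact in $C([0,1];\mathbb R^d)\times C([0,1];\mathbb R^{d^2})$, and $\sup_n\bar{\mathbb P}^n\bigl((X,A)\notin K_1\times K_2\bigr)<\epsilon$.

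The proof therefore has three steps.

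\emph{Step 1: identify the spaces.} The map $(x,a)\mapsto(x,a)$ from the product $C([0,1];\mathbb R^d)\times C([0,1];\mathbb R^{d^2})$ to $\bar\Omega=C([0,1];\mathbb R^d\times\mathbb R^{d^2})$ is a homeomorphism for the uniform topologies. This is clear because the uniform norm on the joint path is equivalent to the maximum of the two componentwise uniform norms. Compactness of $K_1\times K_2$ therefore transfers to $\bar\Omega$.

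\emph{Step 2: tightness of the $X$-marginals.} This follows from Proposition~\ref{prop:tightness_Xt}, whose two hypotheses must be checked.
\begin{enumerate}
\item[(i)] Tightness of $\mathrm{Law}(X_0^n)=\mu_0^n$ holds because $\mu_0^n\to\mu_0$ weakly, and weakly convergent sequences on $\mathbb R^d$ are tight by Prokhorov.
\item[(ii)] The entropy bound \eqref{eq:entropy_integrability} is exactly Lemma~\ref{lem:33}, which relies on $\sup_n J(\mathbb P^n)<\infty$ and Assumption~\ref{ass:ref_process_duality}.
\end{enumerate}
The Aldous argument in that proposition gives tightness in $D$. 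Since every $\mathbb P^n$ charges only continuous paths, and $C$ is closed in $D$ with the Skorokhod topology agreeing with the uniform one there, this yields tightness in $C([0,1];\mathbb R^d)$.

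\emph{Step 3: tightness of the $A$-marginals.} This follows from Proposition~\ref{prop:Tightness_At}, again fed by \eqref{eq:entropy_integrability}. The key input is the bound $\tr(\Sigma_1^n)\le C\lambda_1^n$, which holds because $\lambda_1^n=\frac1d\tr(\bar\Sigma_2^{-1}\Sigma_1^n)\ge\frac{1}{dM}\tr(\Sigma_1^n)$ when $\bar\Sigma_2\preceq MI_d$. Since $\mathbb S_+^d$ is a closed subset of $\mathbb R^{d^2}$, a compact set in $C([0,1];\mathbb S_+^d)$ is also compact in $C([0,1];\mathbb R^{d^2})$.

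\emph{Main obstacle.} The product argument itself is routine. The only point needing care is that both marginal propositions rest on the same uniform estimate \eqref{eq:entropy_integrability}, and that this estimate holds for the selected near-optimal sequence $(\mathbb P^n)$. I would verify this explicitly by recalling that $\sup_nJ(\mathbb P^n)<\infty$ and $\xi$ is bounded, which is precisely what Lemma~\ref{lem:33} requires.
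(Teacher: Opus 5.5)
Your proposal is correct and follows the same route as the paper. The paper invokes Proposition~\ref{prop:tightness_Xt} for the $X$-marginals and Proposition~\ref{prop:Tightness_At} for the $A$-marginals, then concludes by taking products of compact sets; your extra checks (tightness of $\mu_0^n$ from weak convergence, the bound $\tr(\Sigma_1^n)\le dM\lambda_1^n$, and the passage from $D$ to $C$ using that $C$ is closed in $D$) fill in details the paper leaves implicit.
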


\begin{proof}
Let $\bar{\mathbb P}^n := \mathrm{Law}(X^n,A^n)$ be the probability measure induced by $(X^n,A^n)$ on $\bar\Omega$.
Since the sequence $(\mathrm{Law}(X^n))_{n\ge1}$ is tight in $\mathcal M_1\bigl(C([0,1];\mathbb R^d)\bigr)$ by Proposition \ref{prop:tightness_Xt} and $(\mathrm{Law}(A^n))_{n\ge1}$ is tight in $\mathcal M_1\bigl(C([0,1];\mathbb S_+^d)\bigr)$ by Proposition \ref{prop:Tightness_At}, it follows that
the sequence $(\bar{\mathbb P}^n)_{n\ge1}$ is tight on $\bar\Omega$.
\end{proof}

\begin{lemma}
\label{lem:J_lsc}
The function $\bar J$ is lower semicontinuous on $\overline{\mathcal{P}}$.
\end{lemma}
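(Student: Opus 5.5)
To prove that $\bar J$ is lower semicontinuous on $\bar{\mathcal P}$, I will follow Tan--Touzi and rewrite $\bar J$ as the expectation of a lower semicontinuous, bounded-below functional on the enlarged space $\bar\Omega$, then apply the portmanteau theorem. The term $\mathbb E^{\bar{\mathbb P}}[\xi(X_{\cdot\wedge1})]$ is harmless: $\xi$ is bounded and continuous for the uniform topology, so $\bar{\mathbb P}\mapsto \mathbb E^{\bar{\mathbb P}}[\xi]$ is weakly continuous. The whole difficulty is in the running cost, where $\Sigma_1$ is only defined as the a.e. derivative of $A$ and is therefore not a continuous functional of the path.

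To handle this, I will take a sequence $\bar{\mathbb P}^n\Rightarrow\bar{\mathbb P}$ in $\bar{\mathcal P}$ with $\liminf\bar J(\bar{\mathbb P}^n)<\infty$, and pass to a subsequence realizing the liminf. For $\varepsilon>0$ and a path $(x,a)\in\bar\Omega$, I define the regularized functional
\[
\Lambda_\varepsilon(x,a):=\int_0^1 \ell^{\mathrm{tr}}\Bigl(t,x_t,\tfrac{a_{t+\varepsilon}-a_t}{\varepsilon}\Bigr)\,dt,
\]
extending $a$ constantly after time $1$. Because $\Sigma_1\succeq0$, each difference quotient lies in $\mathbb S^d_+$.

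The argument then proceeds in three steps.

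\textbf{Step 1 (Jensen).} By Proposition~\ref{prop:L properties}, $\ell^{\mathrm{tr}}$ is convex in $\Sigma_1$. Writing $\frac{a_{t+\varepsilon}-a_t}{\varepsilon}=\frac1\varepsilon\int_t^{t+\varepsilon}\Sigma_1(s)\,ds$, I expect
\[
\Lambda_\varepsilon\le \int_0^1\frac1\varepsilon\int_t^{t+\varepsilon}\ell^{\mathrm{tr}}(t,x_t,\Sigma_1(s))\,ds\,dt.
\]
The obstacle here is that the time and space arguments are frozen at $(t,x_t)$ instead of $(s,x_s)$. To fix this I will use the continuity of $\lambda_2$ and $\bar\Sigma_2$ together with the bounds in Assumption~\ref{ass:ref_process_duality}. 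On the bounded set of paths, the function $(t,x)\mapsto\ell^{\mathrm{tr}}(t,x,\Sigma)$ should be controlled by $\ell^{\mathrm{tr}}(s,x_s,\Sigma)$ plus an error of the form $\rho(\varepsilon)(1+\lambda_1\log_+\lambda_1)$, with $\rho$ built from moduli of continuity. This gives $\mathbb E^{\bar{\mathbb P}^n}[\Lambda_\varepsilon]\le \bar J(\bar{\mathbb P}^n)-\mathbb E^{\bar{\mathbb P}^n}\xi+o_\varepsilon(1)$, uniformly in $n$ thanks to the entropy bound \eqref{eq:entropy_integrability}. In the special case where the reference is constant in $(t,x)$, this step is exact. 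This step is where I expect the main technical work, and it may require an additional localization in $x$ using tightness.

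\textbf{Step 2 (lsc of $\Lambda_\varepsilon$).} The map $(x,a)\mapsto\Lambda_\varepsilon(x,a)$ is lower semicontinuous on $\bar\Omega$ for the uniform topology and bounded below by $0$. This follows from the joint lower semicontinuity and nonnegativity of $\ell^{\mathrm{tr}}$ in Proposition~\ref{prop:L properties}, combined with Fatou's lemma in $dt$. The portmanteau theorem then yields
\[
\mathbb E^{\bar{\mathbb P}}[\Lambda_\varepsilon]\le\liminf_n \mathbb E^{\bar{\mathbb P}^n}[\Lambda_\varepsilon].
\]

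\textbf{Step 3 ($\varepsilon\downarrow0$).} Under $\bar{\mathbb P}$, Lebesgue differentiation gives $\frac{A_{t+\varepsilon}-A_t}{\varepsilon}\to\Sigma_{1,t}$ for $dt\otimes d\bar{\mathbb P}$-a.e. $(t,\omega)$. Lower semicontinuity of $\ell^{\mathrm{tr}}$ together with Fatou's lemma then gives
\[
\mathbb E^{\bar{\mathbb P}}\Bigl[\int_0^1\ell^{\mathrm{tr}}(t,X_t,\Sigma_{1,t})\,dt\Bigr]\le\liminf_{\varepsilon\downarrow0}\mathbb E^{\bar{\mathbb P}}[\Lambda_\varepsilon].
\]

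Chaining Steps 1 to 3 with the weak continuity of the $\xi$-term gives $\bar J(\bar{\mathbb P})\le\liminf_n\bar J(\bar{\mathbb P}^n)$, which is the claimed lower semicontinuity.
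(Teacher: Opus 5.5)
Your overall architecture matches the paper's proof: difference-quotient functional, Jensen via convexity of $\ell^{\mathrm{tr}}$ in $\Sigma_1$, joint lower semicontinuity plus Fatou, Portmanteau, Lebesgue differentiation under the limit law, and weak continuity of the $\xi$-term. However, Step~1 as you formulate it does not go through. You want an expectation bound $\mathbb E^{\bar{\mathbb P}^n}[\Lambda_\varepsilon]\le \mathbb E^{\bar{\mathbb P}^n}[\int_0^1\ell^{\mathrm{tr}}(t,X_t,\Sigma_{1,t})\,dt]+o_\varepsilon(1)$ uniformly in $n$. But the comparison of $\ell^{\mathrm{tr}}(t,x_t,\cdot)$ with $\ell^{\mathrm{tr}}(s,x_s,\cdot)$ is only available on paths with $\|x\|_\infty\le R$ and small $\varepsilon$-oscillation, and off that set nothing controls $\Lambda_\varepsilon$. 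Assumption~\ref{ass:ref_process_duality} gives no global lower bound on $\bar\Sigma_2$, so $\lambda_1(t,x_t)=\frac1d\tr(\bar\Sigma_2(t,x_t)^{-1}\Sigma)$ can be arbitrarily larger than $\lambda_1(s,x_s)$. Tightness only makes this bad set small in probability uniformly in $n$. It does not make $\mathbb E^{\bar{\mathbb P}^n}[\Lambda_\varepsilon\mathbf 1_{\rm bad}]$ small; that would need uniform integrability of $\int_0^1\ell^{\mathrm{tr}}\,dt$, which $\sup_n\bar J(\bar{\mathbb P}^n)<\infty$ does not give. Separately, your error form $\rho(\varepsilon)(1+\lambda_1\log_+\lambda_1)$ is too weak even on the good set. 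The coefficient $\lambda_1(t,x)$ multiplying the unbounded factor $-\frac12\log\det(\bar\Sigma_2^{-1}\bar\Sigma_1)$ varies with $(t,x)$, so for nearly singular $\Sigma_1$ the discrepancy is of order $\Delta\cdot\ell^{\mathrm{tr}}$ itself. The usable comparison is multiplicative: $\ell^{\mathrm{tr}}(s,x,\Sigma)\le(1+\Delta)\ell^{\mathrm{tr}}(t,y,\Sigma)+\Delta$ for bounded, nearby $(s,x),(t,y)$, uniformly in $\Sigma$.

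The missing idea is to localize \emph{inside} the functional before applying Portmanteau, not in expectation afterwards. The paper multiplies by a continuous cutoff $\chi_{R,\varepsilon,\delta}=\beta_R(x)\gamma_{\varepsilon,\delta}(x)$ that vanishes when $\|x\|_\infty\ge R+1$ or $\omega_x(\varepsilon)\ge 2\delta$. Then the pathwise inequality $\int_0^1\ell^{\mathrm{tr}}(t,x_t,\dot a_t)\,dt\ge\chi_{R,\varepsilon,\delta}(x)\bigl[F_\varepsilon(x,a)/(1+\Delta)-\Delta\bigr]$ holds for every path; on the bad set it holds simply because $\ell^{\mathrm{tr}}\ge0$. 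The product $\chi_{R,\varepsilon,\delta}F_\varepsilon$ is nonnegative and lower semicontinuous, so Portmanteau applies to it directly. The cutoffs are removed only under the limit law: $\gamma_{\varepsilon,\delta}(X)\to1$ a.s.\ as $\varepsilon\downarrow0$ by path continuity, then $\delta\downarrow0$ kills $\Delta$, and $\beta_R\uparrow1$ is handled by monotone convergence. With this change your Steps~2 and~3 go through essentially unchanged, and you no longer need the entropy bound or the subsequence extraction.
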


\begin{proof}
Let \(\bar{\mathbb P}_n\Rightarrow\bar{\mathbb P}_0\) on
$
\bar\Omega.
$
We want to prove
$
\bar J(\bar{\mathbb P}_0)
\le
\liminf_{n\to\infty}\bar J(\bar{\mathbb P}_n).
$

Fix \(\varepsilon\in(0,1)\) and define
$
F_\varepsilon(x,a)
:=
\int_0^{1-\varepsilon}
\ell^{\mathrm{tr}}
\left(
s,x_s,\frac{a_{s+\varepsilon}-a_s}{\varepsilon}
\right)ds .
$
Let
$
\omega_x(\varepsilon)
:=
\sup_{|t-s|\le\varepsilon}|x_t-x_s|.
$
For \(R>0\) and \(\delta>0\), choose continuous cutoffs
\(\beta_R,\gamma_{\varepsilon,\delta}:C([0,1];\mathbb R^d)\to[0,1]\) such that
$
\beta_R(x)=1 \text{ if } \|x\|_\infty\le R$
and $\beta_R(x)=0 \text{ if } \|x\|_\infty\ge R+1,
$
with \(\beta_R\uparrow1\) as \(R\to\infty\), and
$
\gamma_{\varepsilon,\delta}(x)=1$ if $ \omega_x(\varepsilon)\le\delta$;
$
\gamma_{\varepsilon,\delta}(x)=0
$ if $\omega_x(\varepsilon)\ge2\delta.
$
Set
$
\chi_{R,\varepsilon,\delta}:=\beta_R,\gamma_{\varepsilon,\delta}.
$

We first record the pathwise estimate. Let
$
a_t=\int_0^t \eta_r\,dr.
$
On the set where \(\chi_{R,\varepsilon,\delta}(x)>0\), we have
$
\|x\|_\infty<R+1$
and $
\omega_x(\varepsilon)<2\delta.
$
Hence, for \(r\in[s,s+\varepsilon]\),
$
|r-s|\le\varepsilon$, 
$
|x_r-x_s|\le2\delta$
and $
|x_r|\vee |x_s|\le R+1.
$
By convexity of
\(\Sigma\mapsto\ell^{\mathrm{tr}}(s,x_s,\Sigma)\), Jensen's inequality gives
\[
\ell^{\mathrm{tr}}
\left(
s,x_s,\frac1\varepsilon\int_s^{s+\varepsilon}\eta_r\,dr
\right)
\le
\frac1\varepsilon
\int_s^{s+\varepsilon}
\ell^{\mathrm{tr}}(s,x_s,\eta_r)\,dr .
\]
By Assumption \ref{ass:ref_process_duality}, for every \(R>0\), there exists
\(\Delta_{\ell,R}(\varepsilon,\delta)\to0\) as
\((\varepsilon,\delta)\to(0,0)\), such that
$
\ell^{\mathrm{tr}}(s,x,\Sigma)
\le
\bigl(1+\Delta_{\ell,R}(\varepsilon,\delta)\bigr)
\ell^{\mathrm{tr}}(t,y,\Sigma)
+
\Delta_{\ell,R}(\varepsilon,\delta)
$
whenever
\[
|s-t|\le\varepsilon,\qquad |x-y|\le\delta,\qquad
|x|\vee |y|\le R,\qquad \Sigma\in\mathbb S^d_+.
\]
We thus have 
$
\ell^{\mathrm{tr}}(s,x_s,\eta_r)
\le
\bigl(1+\Delta_{\ell,R+1}(\varepsilon,2\delta)\bigr)
\ell^{\mathrm{tr}}(r,x_r,\eta_r)
+
\Delta_{\ell,R+1}(\varepsilon,2\delta).
$
Therefore, after integrating in \(s\) and using Fubini together with
\(\ell^{\mathrm{tr}}\ge0\),
\[
F_\varepsilon(x,a)
\le
\bigl(1+\Delta_{\ell,R+1}(\varepsilon,2\delta)\bigr)
\int_0^1
\ell^{\mathrm{tr}}(t,x_t,\eta_t)\,dt
+
\Delta_{\ell,R+1}(\varepsilon,2\delta).
\]
Consequently, for every admissible \((x,a)\),
\[
\int_0^1
\ell^{\mathrm{tr}}(t,x_t,\dot a_t)\,dt
\ge
\chi_{R,\varepsilon,\delta}(x)
\left[
\frac{F_\varepsilon(x,a)}
{1+\Delta_{\ell,R+1}(\varepsilon,2\delta)}
-
\Delta_{\ell,R+1}(\varepsilon,2\delta)
\right].
\]

Next, \(F_\varepsilon\) is lower semicontinuous on \(\bar\Omega\). Indeed, if
\((x^m,a^m)\to(x,a)\) uniformly, then we have 
$
(a^m_{s+\varepsilon}-a^m_s)/{\varepsilon}
\longrightarrow
(a_{s+\varepsilon}-a_s)/{\varepsilon}
$
for every \(s\). Since \(\ell^{\mathrm{tr}}\) is lower semicontinuous, Fatou's
lemma yields
$
F_\varepsilon(x,a)
\le
\liminf_{m\to\infty}F_\varepsilon(x^m,a^m).
$
Since \(\chi_{R,\varepsilon,\delta}\) is continuous and \(F_\varepsilon\ge0\),
the product \(\chi_{R,\varepsilon,\delta}F_\varepsilon\) is also lower
semicontinuous. Hence, by Portmanteau,
\[
\liminf_{n\to\infty}
\mathbb E^{\bar{\mathbb P}_n}
\left[
\chi_{R,\varepsilon,\delta}(X)F_\varepsilon(X,A)
\right]
\ge
\mathbb E^{\bar{\mathbb P}_0}
\left[
\chi_{R,\varepsilon,\delta}(X)F_\varepsilon(X,A)
\right].
\]
\noindent
Using the pathwise estimate under \(\bar{\mathbb P}_n\), we obtain
\[
\liminf_{n\to\infty}
\mathbb E^{\bar{\mathbb P}_n}
\left[
\int_0^1
\ell^{\mathrm{tr}}(t,X_t,\Sigma_t)\,dt
\right]
\ge
\frac{
\mathbb E^{\bar{\mathbb P}_0}
\left[
\chi_{R,\varepsilon,\delta}(X)F_\varepsilon(X,A)
\right]
}{
1+\Delta_{\ell,R+1}(\varepsilon,2\delta)
}
-
\Delta_{\ell,R+1}(\varepsilon,2\delta).
\]

We now pass to the limit in the smoothing parameters. Since \(X\) has
continuous paths,
$
\gamma_{\varepsilon,\delta}(X)\to1
\
\bar{\mathbb P}_0\text{-a.s.}
$
for every fixed \(\delta>0\). Moreover, $
\frac{A_{s+\varepsilon}-A_s}{\varepsilon}
=
\frac1\varepsilon\int_s^{s+\varepsilon}\Sigma_r\,dr
\longrightarrow
\Sigma_s
$
for \(ds\otimes d\bar{\mathbb P}_0\)-a.e. \((s,\omega)\). Therefore, by lower
semicontinuity of \(\ell^{\mathrm{tr}}\) and Fatou's lemma,
\[
\liminf_{\varepsilon\downarrow0}
\mathbb E^{\bar{\mathbb P}_0}
\left[
\chi_{R,\varepsilon,\delta}(X)F_\varepsilon(X,A)
\right]
\ge
\mathbb E^{\bar{\mathbb P}_0}
\left[
\beta_R(X)
\int_0^1
\ell^{\mathrm{tr}}(t,X_t,\Sigma_t)\,dt
\right].
\]
Letting first \(\varepsilon\downarrow0\), then \(\delta\downarrow0\), and using
$
\lim_{\delta\downarrow0}
\limsup_{\varepsilon\downarrow0}
\Delta_{\ell,R+1}(\varepsilon,2\delta)=0,
$
we get
\[
\liminf_{n\to\infty}
\mathbb E^{\bar{\mathbb P}_n}
\left[
\int_0^1
\ell^{\mathrm{tr}}(t,X_t,\Sigma_t)\,dt
\right]
\ge
\mathbb E^{\bar{\mathbb P}_0}
\left[
\beta_R(X)
\int_0^1
\ell^{\mathrm{tr}}(t,X_t,\Sigma_t)\,dt
\right].
\]
Finally, letting \(R\to\infty\) and using \(\beta_R\uparrow1\) together with
\(\ell^{\mathrm{tr}}\ge0\), the monotone convergence theorem gives
\[
\liminf_{n\to\infty}
\mathbb E^{\bar{\mathbb P}_n}
\left[
\int_0^1
\ell^{\mathrm{tr}}(t,X_t,\Sigma_t)\,dt
\right]
\ge
\mathbb E^{\bar{\mathbb P}_0}
\left[
\int_0^1
\ell^{\mathrm{tr}}(t,X_t,\Sigma_t)\,dt
\right].
\]

It remains to handle the payoff term. Since
\(\bar{\mathbb P}_n\Rightarrow\bar{\mathbb P}_0\), the \(X\)-marginals converge
weakly on \(C([0,1];\mathbb R^d)\). Since \(\xi\) is bounded and continuous for
the uniform topology,
$
\mathbb E^{\bar{\mathbb P}_n}
\left[
\xi(X_{1\wedge\cdot})
\right]
\longrightarrow
\mathbb E^{\bar{\mathbb P}_0}
\left[
\xi(X_{1\wedge\cdot})
\right].
$
Combining the two estimates yields
$
\liminf_{n\to\infty}\bar J(\bar{\mathbb P}_n)
\ge
\bar J(\bar{\mathbb P}_0).
$
Thus \(\bar J\) is lower semicontinuous.
\end{proof}
\begin{remark}
The enlargement of the canonical space is necessary in the above proof because the cost
functional depends on the quadratic variation $A=\langle X\rangle$.
If one works on the space carrying only $X$, applying the Portmanteau
theorem would require the map
$
X \longmapsto \int_0^1 \ell^{\mathrm{tr}}(t, X_t, \Sigma_1(t))\,dt
$
to be lower semicontinuous with respect to the uniform topology.
This would in particular require the quadratic variation $A$
to depend continuously on $X$, which is not the case. By enlarging the space and taking $(X,A)$ as canonical coordinates,
the cost can be approximated by functionals that are lower
semicontinuous in $(X,A)$, allowing the lower semicontinuity argument
to go through.
\end{remark}

\begin{proposition}
\label{prop:lifted-space}
\begin{enumerate}[label=\textnormal{(\roman*)}, align=left, leftmargin=*]
\item[{\rm (i)}]
For any probability measure $\mathbb{P} \in \mathcal{P}(\mu_0,\mu_1)$, there exists a probability
measure $\overline{\mathbb{P}} \in \overline{\mathcal{P}}(\mu_0,\mu_1)$ such that
$J(\mathbb{P}) = \overline{J}(\overline{\mathbb{P}})$.

\item[{\rm (ii)}]
Conversely, for any $\overline{\mathbb{P}} \in \overline{\mathcal{P}}$, there exists a probability measure
$\mathbb{P} \in \mathcal{P}(\mu_0,\mu_1)$ such that
\[
J(\mathbb{P}) = \overline{J}(\overline{\mathbb{P}}).
\]
\end{enumerate}
\end{proposition}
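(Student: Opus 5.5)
For (i), given $\mathbb P\in\mathcal P(\mu_0,\mu_1)$, I would take the lift
\[
\overline{\mathbb P}:=\mathbb P\circ (X,A^{\mathbb P})^{-1},
\]
where $A^{\mathbb P}=\langle X\rangle$ is the quadratic variation process. This process has a version that is continuous, adapted, and absolutely continuous with density $\Sigma_1^{\mathbb P}$. The map $\omega\mapsto(\omega,A^{\mathbb P}(\omega))$ is measurable. Quadratic variation can be realized pathwise as a $\mathbb P$-a.s. limit of sums of squared increments along dyadic partitions, e.g.\ à la Karandikar/Bichteler, so $A$ is in fact a measurable functional of $X$.

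Under $\overline{\mathbb P}$ the canonical $X$ is then a continuous local martingale in the enlarged filtration $\bar{\mathbb F}$. Indeed, $\bar{\mathcal F}_t\subset$ the $\overline{\mathbb P}$-image of $\mathcal F_t$, because $A_{\cdot\wedge t}$ is $\mathcal F_t$-measurable. Moreover $\langle X\rangle=A$, and $A$ is absolutely continuous. By Lebesgue differentiation, the $\limsup$ defining $\Sigma_1$ coincides $dt\otimes d\overline{\mathbb P}$-a.e.\ with $\Sigma_1^{\mathbb P}$. The marginals are unchanged, so $\overline{\mathbb P}\in\overline{\mathcal P}(\mu_0,\mu_1)$. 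The integrand of $\bar J$ equals that of $J$ after the change of variables, hence $J(\mathbb P)=\bar J(\overline{\mathbb P})$.

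For (ii), I read the statement as $\overline{\mathbb P}\in\overline{\mathcal P}(\mu_0,\mu_1)$ (marginals are needed), and I would take $\mathbb P:=\overline{\mathbb P}\circ X^{-1}$. The key step, and the main obstacle, is checking that $X$ remains a local martingale in its own (smaller) filtration $\mathbb F$. This holds because the localizing times $\tau_r=\inf\{t:|X_t|\ge r\}$ are $\mathbb F$-stopping times. The stopped process $X^{\tau_r}$ is a bounded $\bar{\mathbb F}$-martingale, and by the tower property it is also an $\mathbb F$-martingale.

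Next I would identify the quadratic variation. The bracket is pathwise, as a limit in probability of sums of squared increments, so it does not depend on the filtration. Hence $\langle X\rangle^{\mathbb P}=A$ $\overline{\mathbb P}$-a.s., which shows that $A$ is a.s.\ a measurable function of $X$. Consequently $\Sigma_1^{\mathbb P}(X)=\Sigma_1$ $dt\otimes d\overline{\mathbb P}$-a.e., where $\Sigma_1^{\mathbb P}$ is chosen $\mathbb F$-progressively, e.g.\ as the $\limsup$ of difference quotients of the pathwise bracket. Then $\mathbb P\in\mathcal P(\mu_0,\mu_1)$, and since the cost integrands agree a.s., $J(\mathbb P)=\bar J(\overline{\mathbb P})$.

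The only delicate points are measurability and progressive measurability of the pathwise bracket functional. I would handle these by fixing, once and for all, a Borel functional $\mathcal A:C([0,1];\R^d)\to C([0,1];\mathbb S^d_+)$ that equals the quadratic variation for every continuous semimartingale law.
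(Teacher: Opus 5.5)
Your proposal follows the same route as the paper. For (i) you use the lift $\overline{\mathbb P}=\mathbb P\circ(X,A^{\mathbb P})^{-1}$; the paper simply defers to Tan--Touzi, Proposition 3.11, which is this same lift. For (ii) you take the $X$-marginal of $\overline{\mathbb P}\in\overline{\mathcal P}(\mu_0,\mu_1)$, using that $X$ remains a local martingale in its own smaller filtration with the same bracket density, exactly as the paper does.

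Your explicit localization and pathwise-bracket argument fills in a step the paper only asserts. Apply it to $X-X_0$ with $\tau_r=\inf\{t:|X_t-X_0|\ge r\}$ rather than to $X$ itself: $\mu_0$ need not have bounded support or a first moment, so $X^{\tau_r}$ need not be a bounded martingale.
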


\begin{proof}
(i) The first part of the proof is identical to the argument used by Tan and Touzi \cite{TanTouzi2013} in the proof of their Proposition 3.11.
% \textbf{(i)} Let $\mathbb{P}\in\mathcal P(\mu_0,\mu_1)$. By \ref{eq:mart-decomp}, there exists an $\F$-adapted continuous local martingale $M$
% such that $X_t=X_0+M_t$, $\mathbb{P}$-a.s. for all $t$.
% Let $A^\mathbb{P}:=\langle M\rangle$ be the quadratic variation of $M$.
% In particular, for each $t$, the map $\omega\mapsto (X_t(\omega),A_t^\mathbb{P}(\omega))$ is $\F_t$-measurable.
% It then follows that the map
% \[
% \Omega\ni \omega \longmapsto (X(\omega),A^\mathbb{P}(\omega))\in C([0,1],\R^d)\times C([0,1],\mathbb S^d^+)=:\bar\Omega
% \]
% is $\F_1$-measurable (see Chapter 2 of \cite{Billingsley1968}).

% Define $\overline{\mathbb{P}}:=\mathbb{P}\circ (X,A^\mathbb{P})^{-1}$.
% In the enlarged space $(\overline{\Omega}, \overline{\mathcal{F}}_1, \overline{\mathbb{P}})$, 
% the canonical process $X$ is clearly a continuous local martingale 
% characterized by $A^{\mathbb{P}}(X)$. 
% Moreover,
% $
% A^{\mathbb{P}}(X) = A, 
% \quad \overline{\mathbb{P}}\text{-a.s.},
% $
% where $(X, A)$ are canonical processes in $\Omega$. 
% It follows that, on the enlarged space 
% $(\overline{\Omega}, \overline{\mathcal{F}}, \overline{\mathbb{P}})$, 
% $X$ is a continuous local martingale characterized by $A$. 
% Also, $A$ is clearly $\overline{\mathbb{P}}$-a.s. absolutely continuous.
% Hence $\overline{\mathbb{P}}\in \bar{\mathcal P}(\mu_0,\mu_1)$.

% Finally, we obtain
% \[
% J(\overline{\mathbb{P}})=\E^{\overline{\mathbb{P}}}\Big[\int_0^1 \bigl(L(\Sigma_1^{\mathbb P}(t)) + c(t,X_t)\bigr)\,dt\Big]
% =\E^{\mathbb{P}}\Big[\int_0^1 \bigl(L(\Sigma_1^{\mathbb P}(t)) + c(t,X_t)\bigr)\,dt\Big]
% =J(\mathbb{P}).
% \]

(ii) Let $\overline{\mathbb{P}}\in\bar{\mathcal P}(\mu_0,\mu_1)$. We consider the enlarged space $\overline{\Omega}$, and denote by 
$\overline{\mathbb{F}}^{X} = (\overline{\mathcal{F}}^{X}_t)_{0 \le t \le 1}$ 
the filtration generated by process $X$. Under $\overline{\mathbb{P}}$, the canonical process $X$ is a continuous local martingale
with respect to the canonical filtration
$
\bar\F_t:=\sigma(X_u,\bar A_u:\ u\le t).
$
Let $\F^X_t:=\sigma(X_u:\ u\le t)$.
Since $\F^X_t\subset\bar\F_t$ and $X$ is $\F^X$-adapted, $X$ remains a continuous $\F^X$--local martingale under $\overline{\mathbb{P}}$ with same density $\Sigma_s$ of its quadratic variation process.

Finally, since
$
\overline{\mathcal{F}}^{X}_t
=
\mathcal{F}_t \otimes 
\{\varnothing, C([0,1], \mathbb{R}^{d^2})\},
$
$\overline{\mathbb{P}}$ then induces a probability measure $\mathbb{P}$ on
$(\Omega, \mathcal{F}_1)$ by
$
\mathbb{P}[E]
:=
\overline{\mathbb{P}}
\big[E \times C([0,1], \mathbb{R}^{d^2} \big)],
\
\forall E \in \mathcal{F}_1.
$
Moreover, since the local martingale property holds in the filtration
generated by $X$ under $\bar{\mathbb P}$, it also holds under $\mathbb P$
in the canonical filtration.
Thus $X$ is a continuous $\mathbb F^X$-local martingale under $\mathbb P$.
Clearly, $\mathbb{P} \in \mathcal{P}(\mu_0,\mu_1)$ and
$J(\mathbb{P}) = \overline{J}(\overline{\mathbb{P}})$.
\end{proof}

\begin{lemma}
\label{lem:V-lsc}
The map
\[
(\mu_0,\mu_1) \in \mathcal{M}(\mathbb{R}^d)\times \mathcal{M}(\mathbb{R}^d)
\longmapsto V(\mu_0,\mu_1) \in \overline{\mathbb{R}} := \mathbb{R}\cup\{\infty\}
\]
is lower semicontinuous.
\end{lemma}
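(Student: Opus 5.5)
We argue along a sequence and rely on the preliminaries set up just before the statement. Let $(\mu_0^n,\mu_1^n)\to(\mu_0,\mu_1)$ weakly. We may assume $\liminf_n V(\mu_0^n,\mu_1^n)<\infty$, since otherwise there is nothing to prove. Passing to a subsequence that realizes the $\liminf$, we may also assume $\sup_n V(\mu_0^n,\mu_1^n)<\infty$. We then pick near-optimal $\mathbb P^n\in\mathcal P(\mu_0^n,\mu_1^n)$ with $\sup_n J(\mathbb P^n)<\infty$ and $J(\mathbb P^n)-V(\mu_0^n,\mu_1^n)\to0$, exactly as in the setup above. The goal is to produce some $\mathbb P\in\mathcal P(\mu_0,\mu_1)$ with
\[
J(\mathbb P)\le\liminf_n J(\mathbb P^n)=\liminf_n V(\mu_0^n,\mu_1^n),
\]
which gives $V(\mu_0,\mu_1)\le\liminf_n V(\mu_0^n,\mu_1^n)$.

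\textbf{Step 1: lift and extract a limit.} By Proposition~\ref{prop:lifted-space}(i), each $\mathbb P^n$ lifts to $\bar{\mathbb P}^n:=\mathrm{Law}(X,A^{\mathbb P^n})\in\bar{\mathcal P}(\mu_0^n,\mu_1^n)$ with $\bar J(\bar{\mathbb P}^n)=J(\mathbb P^n)$. Lemma~\ref{lem:33} supplies the entropy bound \eqref{eq:entropy_integrability}. The initial laws $\mu_0^n$ are tight because they converge weakly. Hence Propositions~\ref{prop:tightness_Xt} and \ref{prop:Tightness_At}, combined in Corollary~\ref{thm:tightness}, give tightness of $(\bar{\mathbb P}^n)$. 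Extracting a further subsequence, we get $\bar{\mathbb P}^n\Rightarrow\bar{\mathbb P}$ on $\bar\Omega$. The marginals pass to the limit because $X_0$ and $X_1$ are continuous maps, so $\bar{\mathbb P}\circ X_0^{-1}=\mu_0$ and $\bar{\mathbb P}\circ X_1^{-1}=\mu_1$.

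\textbf{Step 2: the limit is admissible.} This is the main obstacle: we must show $\bar{\mathbb P}\in\bar{\mathcal P}$. Concretely, $X$ must be a continuous local martingale with quadratic variation $A$, and $A$ must be absolutely continuous.

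For absolute continuity, the uniform integrability \eqref{eq:UI-short} gives, for every Borel set $B\subset[0,1]$ of small measure $\delta$,
\[
\mathbb E^{\bar{\mathbb P}^n}\Big[\tr\Big(\int_B dA^n\Big)\Big]\le \frac{C'}{\log(1/\delta)}
\]
uniformly in $n$, by Lemma~\ref{lem:smallset}. This bound is preserved in the limit: one approximates $\mathbf 1_B$ by finite unions of intervals and uses that increments of $A$ are continuous functionals. The limit $A$ is nondecreasing in the Loewner order. The bound then forces $\mathbb E[\tr A(B)]=0$ for Lebesgue-null $B$, so $A$ is a.s. absolutely continuous.

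For the martingale property, we follow \cite{TanTouzi2013}. Localize with $\tau_R=\inf\{t:|X_t|+\tr A_t\ge R\}$, and note that under $\bar{\mathbb P}^n$ both $X^{\tau_R}$ and $X^{\tau_R}(X^{\tau_R})^\top-A^{\tau_R}$ are martingales. Pass to the limit in
\[
\mathbb E\big[(Y_t-Y_s)\,h(X_{t_1},A_{t_1},\dots)\big]=0,
\]
where $Y$ is either stopped process. Two technical points arise. First, $\tau_R$ is not continuous; this is handled by choosing $R$ outside a countable exceptional set, as in Tan–Touzi. Second, uniform integrability of the stopped quantities is needed, and it follows from boundedness up to $\tau_R$ plus the uniform bound on $\mathbb E[\tr A_1^n]$ from \eqref{eq:L1-short}.

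\textbf{Step 3: lower semicontinuity and projection.} By Lemma~\ref{lem:J_lsc},
\[
\bar J(\bar{\mathbb P})\le\liminf_n\bar J(\bar{\mathbb P}^n)=\liminf_n V(\mu_0^n,\mu_1^n).
\]
Proposition~\ref{prop:lifted-space}(ii) then projects $\bar{\mathbb P}$ to some $\mathbb P\in\mathcal P(\mu_0,\mu_1)$ with $J(\mathbb P)=\bar J(\bar{\mathbb P})$. Therefore $V(\mu_0,\mu_1)\le J(\mathbb P)\le\liminf_n V(\mu_0^n,\mu_1^n)$. Applied with $\mu_0^n=\mu_0$ and $\mu_1^n=\mu_1$ for all $n$, the same argument also shows that the infimum defining $V$ is attained.
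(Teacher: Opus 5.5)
Your architecture matches the paper's: near-optimal $\mathbb P^n$, lift to $\bar\Omega$, tightness from Corollary~\ref{thm:tightness}, Prokhorov, admissibility of the limit, Lemma~\ref{lem:J_lsc}, and projection via Proposition~\ref{prop:lifted-space}(ii). Your martingale identification localizes with $\tau_R$, taking $R$ outside a countable set so that $\tau_R$ is $\bar{\mathbb P}$-a.s.\ a continuity point. That is a sound and more explicit substitute for the paper's appeal to Meyer--Zheng.

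The genuine gap is the absolute continuity of the limiting $A$. You bound $\mathbb E[\tr A(B)]$ only for each fixed, deterministic Borel set $B$. You then infer from $\mathbb E[\tr A(B)]=0$ for every Lebesgue-null $B$ that $A$ is a.s.\ absolutely continuous. That inference is false. It only shows that the mean measure $B\mapsto\mathbb E[\tr A(B)]$ is absolutely continuous, while the null set charged by $dA(\omega)$ may move with $\omega$. For example, let $G$ be the Cantor function extended to $[0,2]$ by $G(s)=1+G(s-1)$, and let $U$ be uniform on $[0,1]$. Then $A_t:=G(t+U)-G(U)$ is continuous, nondecreasing and singular for every $\omega$. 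Yet Fubini gives $\mathbb E[A(B)]\le 2|B|$ for every Borel $B$, so it satisfies your limiting bound $C'/\log(1/\delta)$ for all small $\delta$. No estimate of the form you pass to the limit can therefore give pathwise absolute continuity.

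What is missing is uniformity over sets \emph{inside} the expectation. In Appendix~\ref{sec:ac_limit} the paper works with the pathwise modulus $\Phi_\delta(A)=\sup\{\sum_i\|A_{t_i}-A_{s_i}\|\}$, the supremum taken over disjoint families with $\sum_i(t_i-s_i)\le\delta$. It then proceeds as follows:
\begin{enumerate}
\item It bounds $\Phi_\delta(A)$ under $\bar{\mathbb P}^n$ by $C\sup_{|E|\le\delta}\int_E\lambda_1^n\,dt$.
\item It uses \eqref{eq:UI-short} to make this small in probability, uniformly in $n$.
\item It transfers the estimate to $\bar{\mathbb P}$ via lower semicontinuity of $\Phi_\delta$ and the Portmanteau theorem.
\item Monotonicity in $\delta$ then gives $\Phi_\delta(A)\to0$ a.s.
\end{enumerate}

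Your route can be repaired the same way. The Young-inequality argument of Lemma~\ref{lem:smallset} is actually pathwise, so for every path it gives $\sup_{|E|\le\delta}\int_E\lambda_1^n\,dt\le\bigl(\int_0^1(\lambda_1^n\log\lambda_1^n)_+\,dt+e^{-1}\bigr)/\log(1/\delta)$. Hence $\sup_n\mathbb E^{\bar{\mathbb P}^n}[\Phi_\delta(A)]\le C''/\log(1/\delta)$. This bound survives the weak limit because $\Phi_\delta\ge0$ is lower semicontinuous.
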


\begin{proof}
Let $(\mu_0^n)_{n \in \mathbb{N}}$ and $(\mu_1^n)_{n \in \mathbb{N}}$ be sequences in $\mathcal{M}(\mathbb{R}^d)$ converging weakly to $\mu_0$ and $\mu_1$, respectively. We aim to show that:
$
\liminf_{n \to \infty} V(\mu_0^n,\mu_1^n) \ge V(\mu_0,\mu_1).
$

By Corrolary~\ref{thm:tightness} , the sequence $(\overline{\mathbb{P}}_n)_{n \ge 1}$ induced by $(P_n, X, A^{P_n})$ on $(\overline{\Omega}, \overline{\mathcal{F}}_1)$ is tight. Prokhorov's theorem thus yields a subsequence and a probability measure $\overline{\mathbb{P}}$ on $\overline{\Omega}$ such that $\overline{\mathbb{P}}_n \Rightarrow \overline{\mathbb{P}}$ weakly.

By the convexity of $\mathbb{S}_+^d$, for every $0 \le s < t \le 1$, we have $(A^n_t - A^n_s)/(t - s) \in \mathbb{S}_+^d$, $\overline{\mathbb{P}}_n$-a.s. Since the evaluation maps are continuous and $\mathbb{S}_+^d$ is closed, weak convergence guarantees that for all rational $0 \le s < t \le 1$,
$
\frac{A_t - A_s}{t - s} \in \mathbb{S}_+^d, \overline{\mathbb{P}}\text{-a.s.}
$
By the density of $\mathbb{Q}$ in $[0,1]$ and the path continuity of $A$, this property extends $\overline{\mathbb{P}}$-a.s. to all real $0 \le s < t \le 1$.
As established in Appendix~\ref{sec:ac_limit}, $A$ remains absolutely continuous under $\overline{\mathbb{P}}$. Defining the derivative $\Sigma_t = \dot{A}_t$ for a.e. $t \in [0,1]$, the Lebesgue differentiation theorem ensures that $\Sigma_t \in \mathbb{S}_+^d$, $dt \otimes d\overline{\mathbb{P}}$-a.e. Furthermore, by Theorem 12 in \cite{MeyerZheng1984}, $X$ remains a local martingale under $\overline{\mathbb{P}}$ with quadratic variation $A$.
Consequently, $\overline{\mathbb{P}} \in \overline{\mathcal{P}}(\mu_0,\mu_1)$. Combining this with Proposition~\ref{prop:lifted-space} and the lower semicontinuity of $\overline{J}$ (Lemma~\ref{lem:J_lsc}), we obtain:
\[
\liminf_{n \to \infty} V(\mu_0^n,\mu_1^n) = \liminf_{n \to \infty} J(\mathbb{P}^n) = \liminf_{n \to \infty} \overline{J}(\overline{\mathbb{P}}_n) \ge \overline{J}(\overline{\mathbb{P}}).
\]

Applying Proposition~\ref{prop:lifted-space} once more, there exists $\mathbb{P} \in \mathcal{P}(\mu_0,\mu_1)$ such that $\overline{J}(\overline{\mathbb{P}}) = J(\mathbb{P})$. We conclude:
$
\liminf_{n \to \infty} V(\mu_0^n,\mu_1^n) \ge J(\mathbb{P}) \ge V(\mu_0,\mu_1).
$
\end{proof}

\begin{proposition}
\label{prop:existence of minimizers}
For every
$\mu_0, \mu_1 \in \mathcal{M}(\mathbb{R}^d)$ such that
$V(\mu_0,\mu_1) < \infty$, existence holds for the minimization problem
$V(\mu_0,\mu_1)$. Moreover, the set of minimizers
$
\bigl\{ \mathbb{P} \in \mathcal{P}(\mu_0,\mu_1) : J(\mathbb{P}) = V(\mu_0,\mu_1) \bigr\}
$
is a compact set of probability measures on $\Omega$.
\end{proposition}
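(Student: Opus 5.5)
The plan is to run the proof of Lemma~\ref{lem:V-lsc} with constant marginals, $\mu_0^n\equiv\mu_0$ and $\mu_1^n\equiv\mu_1$. Since $V(\mu_0,\mu_1)<\infty$, we choose a minimizing sequence $(\mathbb P^n)\subset\mathcal P(\mu_0,\mu_1)$ with $J(\mathbb P^n)\downarrow V(\mu_0,\mu_1)$, so $\sup_n J(\mathbb P^n)<\infty$. Lemma~\ref{lem:33} then gives the entropy bound \eqref{eq:entropy_integrability}. The initial laws are all equal to $\mu_0$, hence tight, so Corollary~\ref{thm:tightness} shows that the lifted laws $\bar{\mathbb P}^n=\Law(X,A^{\mathbb P^n})$ are tight on $\bar\Omega$. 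By Prokhorov we extract a weakly convergent subsequence $\bar{\mathbb P}^n\Rightarrow\bar{\mathbb P}$.

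As in Lemma~\ref{lem:V-lsc}, we identify the limit. The $X_0$ and $X_1$ marginals pass to the limit because evaluation maps are continuous, so they are $\mu_0$ and $\mu_1$. $A$ stays $\mathbb S^d_+$-increasing and absolutely continuous (Appendix~\ref{sec:ac_limit}), and $X$ is a local martingale with quadratic variation $A$ (Meyer--Zheng). Hence $\bar{\mathbb P}\in\bar{\mathcal P}(\mu_0,\mu_1)$. Lemma~\ref{lem:J_lsc} gives $\bar J(\bar{\mathbb P})\le\liminf\bar J(\bar{\mathbb P}^n)=V(\mu_0,\mu_1)$. Proposition~\ref{prop:lifted-space}(ii) projects $\bar{\mathbb P}$ to some $\mathbb P\in\mathcal P(\mu_0,\mu_1)$ with $J(\mathbb P)=\bar J(\bar{\mathbb P})$. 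Therefore $J(\mathbb P)\le V(\mu_0,\mu_1)$, so equality holds and $\mathbb P$ is a minimizer.

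For compactness of the set $\mathcal S$ of minimizers, we take an arbitrary sequence $(\mathbb P^n)\subset\mathcal S$. This is a constant-value, hence trivially minimizing, sequence, so the previous paragraph applies verbatim. A subsequence of the lifts converges to some $\bar{\mathbb P}$ whose projection $\mathbb P$ lies in $\mathcal S$. The projection $\bar\Omega\to\Omega$, $(x,a)\mapsto x$, is continuous, so the $X$-marginals $\mathbb P^n$ converge weakly to $\mathbb P$. Hence $\mathcal S$ is sequentially compact in the weak topology. This is equivalent to compactness because $\mathcal M_1(\Omega)$ is metrizable, $\Omega$ being Polish.

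The main obstacle is the consistency of the projection. The limit $\mathbb P$ obtained as the $X$-marginal of $\bar{\mathbb P}$ must have quadratic-variation density $\Sigma^{\mathbb P}$ equal, a.s., to $\dot A$ under $\bar{\mathbb P}$; otherwise $J(\mathbb P)=\bar J(\bar{\mathbb P})$ fails. The intended argument is that $\langle X\rangle$ is a pathwise functional of $X$ (the limit in probability of sums of squared increments along dyadic partitions). Since $A=\langle X\rangle$ $\bar{\mathbb P}$-a.s., $A$ is a.s. a measurable function of $X$, and the two costs coincide. This is the content behind Proposition~\ref{prop:lifted-space}(ii), which we invoke. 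A secondary point is that compactness is claimed in $\mathcal M_1(\Omega)$ rather than on $\bar\Omega$. It follows because the projection is continuous and surjective onto $\mathcal S$.
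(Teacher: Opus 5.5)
Your proposal is correct and follows essentially the paper's route. The paper proves the statement by setting $(\mu_0^n,\mu_1^n)=(\mu_0,\mu_1)$ in the proof of Lemma~\ref{lem:V-lsc}: entropy bound from Lemma~\ref{lem:33}, tightness of the lifted laws, Prokhorov, identification of the limit in $\bar{\mathcal P}(\mu_0,\mu_1)$, lower semicontinuity of $\bar J$, and projection via Proposition~\ref{prop:lifted-space}. Your explicit sequential-compactness argument (any sequence of minimizers is a minimizing sequence, and the continuous projection $(x,a)\mapsto x$ carries the subsequential limit to a minimizer) spells out the compactness claim that the paper leaves to ``the same arguments.''
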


\begin{proof}
We just let $(\mu_0^n,\mu_1^n) = (\mu_0,\mu_1)$ in the proof of
Lemma~\ref{lem:V-lsc}, then the required existence result is proved by following
the same arguments.
\end{proof}

\begin{lemma}
\label{lem:V_convex}
The map
$
(\mu_0,\mu_1) \mapsto V(\mu_0,\mu_1)
$
is convex.
\end{lemma}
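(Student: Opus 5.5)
The approach is by mixing admissible laws. Fix $(\mu_0^1,\mu_1^1)$, $(\mu_0^2,\mu_1^2)$ and $\theta\in(0,1)$, and set $\mu_i:=\theta\mu_i^1+(1-\theta)\mu_i^2$ for $i=0,1$. We may assume $V(\mu_0^j,\mu_1^j)<\infty$ for $j=1,2$, since otherwise there is nothing to prove. For $\epsilon>0$, pick $\mathbb P^j\in\mathcal P(\mu_0^j,\mu_1^j)$ with $J(\mathbb P^j)\le V(\mu_0^j,\mu_1^j)+\epsilon$; Proposition~\ref{prop:existence of minimizers} even allows $\epsilon=0$. We then consider the mixture
\[
\mathbb P:=\theta\,\mathbb P^1+(1-\theta)\,\mathbb P^2 .
\]
Its marginals at times $0$ and $1$ are the corresponding mixtures, so $\mathbb P\circ X_0^{-1}=\mu_0$ and $\mathbb P\circ X_1^{-1}=\mu_1$.

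The first step is to show $\mathbb P\in\mathcal P$, i.e.\ that $X$ is a continuous local martingale under $\mathbb P$ with absolutely continuous quadratic variation. The clean route goes through the characterization by quadratic variation. Take $\tau_r=\inf\{t:|X_t|\ge r\}$, which is a canonical stopping time independent of the measure. Under each $\mathbb P^j$, the processes $X^{\tau_r}$ and $X^{\tau_r}(X^{\tau_r})^\top-A^{\tau_r}$ are martingales. Here $A$ must be defined pathwise and measure-independently, as in the lifted space: take $A=\langle X\rangle$ as the limit of sums of squared increments along dyadic partitions, with a $\limsup$ convention. This limit exists and agrees $\mathbb P^j$-a.s.\ with $A^{\mathbb P^j}$ for each $j$, and hence $\mathbb P$-a.s. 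The martingale property is linear in the measure: $\mathbb E^{\mathbb P}[(M_t-M_s)H]=\theta\,\mathbb E^{\mathbb P^1}[\cdots]+(1-\theta)\,\mathbb E^{\mathbb P^2}[\cdots]=0$ for bounded $\mathcal F_s$-measurable $H$. It follows that $X$ is a $\mathbb P$-local martingale with $\langle X\rangle=A$ and density $\Sigma^{\mathbb P}$ satisfying $\Sigma^{\mathbb P}=\Sigma^{\mathbb P^j}$ $dt\otimes d\mathbb P^j$-a.e. Absolute continuity of $A$ holds $\mathbb P^j$-a.s.\ for each $j$, hence $\mathbb P$-a.s.

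The second step is the cost identity. Because $\Sigma^{\mathbb P}$ is a pathwise functional coinciding with each $\Sigma^{\mathbb P^j}$ a.s., the integrand $\int_0^1\ell^{\mathrm{tr}}(t,X_t,\Sigma^{\mathbb P}_{1,t})\,dt+\xi(X_{1\wedge\cdot})$ is a single measurable path functional. It is bounded below, since $\ell^{\mathrm{tr}}\ge0$ by Proposition~\ref{prop:L properties} and $\xi$ is bounded, so expectations are well defined in $(-\infty,\infty]$. Its expectation is therefore linear in the measure:
\[
J(\mathbb P)=\theta J(\mathbb P^1)+(1-\theta)J(\mathbb P^2).
\]
Consequently $V(\mu_0,\mu_1)\le J(\mathbb P)\le \theta V(\mu_0^1,\mu_1^1)+(1-\theta)V(\mu_0^2,\mu_1^2)+\epsilon$, and letting $\epsilon\to0$ gives convexity. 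Note that convexity of $\ell^{\mathrm{tr}}$ is not even needed for this argument; only linearity of the mixture is used.

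The main obstacle is the measure-independence of $\Sigma^{\mathbb P}$. A priori the characteristic is defined $\mathbb P$-a.s., so it must be checked that the mixture's characteristic is not some new object. I would handle this as in the lifted space $\bar\Omega$: define $\Sigma_1(t):=\limsup_n n(A_t-A_{t-1/n})$ with $A$ the pathwise quadratic variation, and verify via Karandikar's pathwise construction that it represents $\langle X\rangle$ under any local-martingale law. Alternatively, one can mix the lifted measures $\bar{\mathbb P}^j$ on $\bar\Omega$, where $A$ is a canonical coordinate. Then $\bar J$ is plainly linear, the local martingale property with characteristic $A$ passes to mixtures by linearity, and Proposition~\ref{prop:lifted-space} transfers the result back to $\mathcal P(\mu_0,\mu_1)$.
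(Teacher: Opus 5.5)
Your proof is correct and is essentially the paper's argument. The paper defers to Tan--Touzi's Lemma~3.15: convexity comes from mixing (near-)optimal admissible laws, checking that the mixture is admissible with the mixed marginals, and using that the cost is linear in the measure, with the lifted space $\bar\Omega$ (where $A$ is a canonical coordinate) removing the measure-dependence of $\Sigma^{\mathbb P}$ exactly as in your alternative route. One caveat on your direct route: sums of squared increments along dyadic partitions need not converge almost surely for a general continuous local martingale. You should therefore rely on Karandikar's stopping-time construction, or on $\mathbb P^j\ll\mathbb P$ with bounded density (which identifies $\langle X\rangle^{\mathbb P}$ with $\langle X\rangle^{\mathbb P^j}$ $\mathbb P^j$-a.s.), rather than on the dyadic limit.
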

\begin{proof}
The proof follows exactly the same arguments as those
of Tan and Touzi \cite{TanTouzi2013} in the proof of their Lemma 3.15.
\end{proof}

\begin{proof}[Proof of Theorem \ref{theorem:strong duality}]
We follow the proof of Theorem 3.6 in \cite{TanTouzi2013}. We will use also use Lemma 3.5 of \cite{TanTouzi2013}, which states 
\begin{equation}
\mu_0(\phi_0)
=
\inf_{\mathbb{P}\in\mathcal{P}(\mu_0)}
\mathbb{E}^{\mathbb{P}}
\left[
\int_0^1 \ell^{\mathrm{tr}}(t,X_t,\Sigma_{1,t}^{\mathbb P})\,dt
+ \phi_1(X_1)
+ \xi(X_{1\wedge\cdot})\right].
\label{eq:lambda_0}
\end{equation}If
$V(\mu_0,\mu_1)=+\infty$ for every $\mu_1\in\mathcal M(\mathbb R^d)$, then
\eqref{eq:dual} and \eqref{eq:lambda_0} imply
$V(\mu_0,\mu_1)=\mathcal V(\mu_0,\mu_1)=+\infty$, so there is nothing to prove.
Hence we may assume that $V(\mu_0,\mu_1)<+\infty$ for at least one
$\mu_1\in\mathcal M(\mathbb R^d)$.
View $\mathcal M(\mathbb R^d)$ as a subset of
$\overline{\mathcal M}(\mathbb R^d)$ with the subspace topology, and define
$\overline V:\overline{\mathcal M}(\mathbb R^d)\to(-\infty,+\infty]$ by
\begin{equation}\label{eq:extension}
\overline V(\eta):=
\begin{cases}
V(\mu_0,\eta), & \eta\in\mathcal M(\mathbb R^d),\\
+\infty, & \text{otherwise}.
\end{cases}
\end{equation}
Then $\overline V$ is proper, convex, and lower semicontinuous on the locally
convex space $\overline{\mathcal M}(\mathbb R^d)$. By Theorem 2.2.15 and
Lemma 3.2.3 in \cite{DeuschelStroock1989},
\begin{equation}\label{eq:fm-mg}
\overline V(\mu_1)
=\sup_{\phi_1\in C_b(\mathbb R^d)}
\bigl\{\mu_1(-\phi_1)-\overline V^*(-\phi_1)\bigr\},
\qquad \mu_1\in\mathcal M(\mathbb R^d),
\end{equation}
where
$
\overline V^*(\varphi)
:=\sup_{\eta\in\overline{\mathcal M}(\mathbb R^d)}
\bigl\{\eta(\varphi)-\overline V(\eta)\bigr\},
\ \varphi\in C_b(\mathbb R^d).
$
For $\phi_1\in C_b(\mathbb R^d)$, \eqref{eq:extension} gives
\begin{align}
\overline V^*(-\lambda_1)
&=\sup_{\mu_1\in\mathcal M(\mathbb R^d)}
\bigl\{\mu_1(-\phi_1)-V(\mu_0,\mu_1)\bigr\} \nonumber\\
&=\sup_{\mu_1\in\mathcal M(\mathbb R^d)}
\sup_{\mathbb P \in\mathcal P(\mu_0,\mu_1)}
\Bigl\{-\mathbb E^\mathbb P[\phi_1(X_1)]
-\mathbb E^\mathbb{P}\!\Big[\int_0^1 \ell^{\mathrm{tr}}(t,X_t,\Sigma_{1,t}^{\mathbb P})\,dt + \xi(X_{1\wedge\cdot})\Big]\Bigr\}. \nonumber
\end{align}
Since every $\mathbb P\in\mathcal P(\mu_0)$ has terminal law
$\mu_1:=\mathbb P\circ X_1^{-1}\in\mathcal M(\mathbb R^d)$ and belongs to
$\mathcal P(\mu_0,\mu_1)$,
\begin{equation}\label{eq:conj-id-mg}
\overline V^*(-\phi_1)
=-\inf_{\mathbb{P}\in\mathcal P(\mu_0)}
\mathbb E^\mathbb{P}\!\Big[\int_0^1 \ell^{\mathrm{tr}}(t,X_t,\Sigma_{1,t}^{\mathbb P})\,dt
+\phi_1(X_1) + \xi(X_{1\wedge\cdot})\Big].
\end{equation}
By \eqref{eq:lambda_0}, the right-hand side equals $-\mu_0(\lambda_0)$. Hence,
substituting into \eqref{eq:fm-mg}, we obtain the desired duality formula
$
V(\mu_0,\mu_1)
=\sup_{\phi_1\in C_b(\mathbb R^d)}
\bigl\{\mu_0(\phi_0)-\mu_1(\phi_1)\bigr\}.
$
\end{proof}

\appendix

\section{Preservation of absolute continuity under the limit - Proof of Lemma \ref{lem:V-lsc}}
\label{sec:ac_limit}

\begin{proof}
For $\delta>0$ and $a\in C([0,1];\mathbb S_+^d)$, define
\[
\Phi_\delta(a)
:=
\sup\Big\{\sum_{i=1}^m \|a(t_i)-a(s_i)\| :
(s_i,t_i)\ \text{pairwise disjoint},\ 
\sum_{i=1}^m (t_i-s_i)\le \delta\Big\}.
\]
It suffices to show that
$
\Phi_\delta(A)\longrightarrow 0
, \bar{\mathbb P}\text{-a.s. as }\delta\downarrow0,
$
since this is the standard characterization of absolute continuity for continuous paths.

Fix $n$. Up to modifying on a $\bar{\mathbb P}_n$-null set, we may assume that
$A_t=\int_0^t \Sigma_1^n(s, x)\,ds
$
simultaneously for all $t$ in [0,1], by continuity and equality on rational times. Hence, we have for $0\le s\le t\le1$,
$
A_t-A_s=\int_s^t \Sigma_1^n(u)\,du\in\mathbb S_+^d.
$
Since all norms are equivalent on $\mathbb S^d$, enlarging the constant $C$ if necessary,
$
\|A_t-A_s\|
\le C\,\tr\!\left(\int_s^t \Sigma_1^n(u,x)\,du\right)
\le C\int_s^t \lambda_1^n(u,x)\,du.
$
Therefore, for any finite disjoint family $(s_i,t_i)$,
$
\sum_i \|A_{t_i}-A_{s_i}\|
\le C\int_{\cup_i(s_i,t_i)} \lambda_1^n(u,x)\,du,
$
and thus
\begin{equation}\label{eq:Phi_dom_short}
\Phi_\delta(A)
\le
C\sup\Big\{\int_{\mathbb{R}^d} \lambda_1^n(t,x)\,dt:\ E\subset[0,1]\ \text{Borel},\ |E|\le\delta\Big\},
\qquad \bar{\mathbb P}_n\text{-a.s.}
\end{equation}

Fix $\varepsilon,\eta>0$. By uniform integrability of $(\lambda_1^n)$ in \eqref{eq:UI-short}, choose $K>0$ such that
\[
\sup_n \E^{\bar{\mathbb P}_n}\!\left[\int_0^1 \lambda_1^n(t,X_t,)\mathbf 1_{\{\lambda_1^n(t, X_t)>K\}}\,dt\right]
\le \frac{\eta\varepsilon}{2C}.
\]
Then choose $\delta>0$ so that $CK\delta\le \eta\varepsilon/2$. For every Borel set $E\subset[0,1]$ with $|E|\le\delta$,
\[
\int_{\mathbb{R}^d} \lambda_1^n(t,x)\,dt
\le K\delta+\int_0^1 \lambda_1^n(t,x)\mathbf 1_{\{\lambda_1^n(t,x)>K\}}\,dt.
\]
Combining this with \eqref{eq:Phi_dom_short} and Markov's inequality yields
\[
\sup_n \bar{\mathbb P}_n(\Phi_\delta(A)>\varepsilon)
\le
\frac{1}{\varepsilon}
\left(
CK\delta
+
C\sup_n \E^{\bar{\mathbb P}_n}\!\left[\int_0^1 \lambda_1^n(t,X_t)\mathbf 1_{\{\lambda_1^n(t,X_t)>K\}}\,dt\right]
\right)
\le \eta.
\]
Since $\eta$ is arbitrary,
$
\lim_{\delta\downarrow0}\sup_n \bar{\mathbb P}_n(\Phi_\delta(A)>\varepsilon)=0.
$

For fixed $\delta$, the map $a\mapsto \Phi_\delta(a)$ is lower semicontinuous on
$C([0,1];\mathbb S_+^d)$, being the supremum of the continuous maps
$
a\mapsto \sum_i \|a(t_i)-a(s_i)\|.
$
Hence $\{a:\Phi_\delta(a)>\varepsilon\}$ is open, and Portmanteau gives
$
\bar{\mathbb P}(\Phi_\delta(A)>\varepsilon)
\le
\liminf_{n\to\infty}\bar{\mathbb P}_n(\Phi_\delta(A)>\varepsilon).
$
Therefore, for all $\varepsilon>0$,
$
\lim_{\delta\downarrow0}\bar{\mathbb P}(\Phi_\delta(A)>\varepsilon)=0
$
\noindent
Finally, let $\varepsilon_m:=2^{-m}$ and choose $\delta_m\downarrow0$ such that
$
\bar{\mathbb P}(\Phi_{\delta_m}(A)>\varepsilon_m)\le 2^{-m}.
$
By Borel--Cantelli,
$
\Phi_{\delta_m}(A)\to0
, \bar{\mathbb P}\text{-a.s.}
$
Since $\delta\mapsto\Phi_\delta(a)$ is non-decreasing, it follows that
$
\lim_{\delta\downarrow0}\Phi_\delta(A)=0
, \bar{\mathbb P}\text{-a.s.}
$
Hence $A$ is absolutely continuous on $[0,1]$, $\bar{\mathbb P}$-a.s.
\end{proof}

\section{Monotonicity of the dual functional - Proof of (\ref{eq:frac_D})}
\label{sec:monotonicity}

\begin{proof}
We work formally, assuming enough smoothness and decay at infinity to justify all differentiations and integrations by parts.

Fix \(\psi\) and a smooth perturbation \(h\). For \(\varepsilon\) small, let
\[
\phi^\varepsilon := \phi^{\psi+\varepsilon h},
\qquad
u := \left.\frac{d}{d\varepsilon}\right|_{\varepsilon=0}\phi^\varepsilon.
\]
Differentiating
$
\partial_t \phi^\varepsilon + H(\nabla_x^2\phi^\varepsilon)=0$ with $
\phi^\varepsilon(1,\cdot)=\psi+\varepsilon h,
$
at \(\varepsilon=0\), yields the following PDE
$
\partial_t u + D H(\nabla_x^2\phi^\psi)[\nabla_x^2 u]=0,
\
u(1,\cdot)=h.
$
Since \(\Sigma^\psi=\Sigma^*(\nabla_x^2\phi^\psi)\) minimizes the definition of \(H\), the envelope theorem gives
$
D H(M)[N]=\frac12 \Sigma^*(M):N.
$
Therefore \(u\) solves the linear backward equation
\begin{equation}\label{eq:lin-u}
\partial_t u + \frac12 \Sigma^\psi : \nabla_x^2 u =0,
\qquad
u(1,\cdot)=h.
\end{equation}

Now pair \(u\) with \(p^\psi\), which solves
$
\partial_t p^\psi = \frac12 \nabla_x^2 : (\Sigma^\psi p^\psi)$ with initial condition $
p^\psi(0,\cdot)=\mu_0.
$
Using \eqref{eq:lin-u} and integrating by parts,
\begin{align*}
\frac{d}{dt}\int_{\R^d} u(t,x)p^\psi(t,x)\,dx
&=
\int_{\R^d} (\partial_t u)\,p^\psi\,dx
+\int_{\R^d} u\,(\partial_t p^\psi)\,dx \\
&=
-\frac12\int_{\R^d} (\Sigma^\psi:\nabla_x^2u)\,p^\psi\,dx
+\frac12\int_{\R^d} u\,\nabla_x^2:(\Sigma^\psi p^\psi)\,dx \\
&=0.
\end{align*}
Hence the pairing is constant in time, so
$
\int_{\R^d} u(0,x)\,\mu_0(dx)
=
\int_{\R^d} h(x)\,p_1^\psi(x)\,dx.
$

We can now differentiate the dual functional:
\begin{align*}
D\mathcal D(\psi)[h]
&=
\int_{\R^d} u(0,x)\,\mu_0(dx)-\int_{\R^d} h(x)\,\mu_1(dx) \\
&=
\int_{\R^d} h(x)\,\bigl(p_1^\psi(x)-\mu_1(x)\bigr)\,dx.
\end{align*}
So the first variation of \(\mathcal D\) at \(\psi\) is exactly \(p_1^\psi-\mu_1\).
Finally, along the flow
$
\partial_s\psi_s=\log\!\left(\frac{p_1^{\psi_s}}{\mu_1}\right),
$
the chain rule yields
$
\frac{d}{ds}\mathcal D(\psi_s)
=
D\mathcal D(\psi_s)[\partial_s\psi_s]
=
\int_{\R^d}
\bigl(p_1^{\psi_s}-\mu_1\bigr)
\log\!\left(\frac{p_1^{\psi_s}}{\mu_1}\right)\,dx.
$
For any positive densities \(\rho,\mu\) with the same mass,
$
\int_{\R^d}(\rho-\mu)\log\!\left(\frac{\rho}{\mu}\right)\,dx
=
\KL(\rho\mid\mu)+\KL(\mu\mid\rho)\ge 0.
$
Applying this with \(\rho=p_1^{\psi_s}\) and \(\mu=\mu_1\) gives
$
\frac{d}{ds}\mathcal D(\psi_s)\ge 0,
$
which proves the monotonicity.
\end{proof}
\bibliographystyle{unsrt}
\bibliography{ref}

\end{document}